\newtheorem{theorem}{\bf Theorem}[section]
\newtheorem{proposition}[theorem]{\bf Proposition}
\newtheorem{definition}[theorem]{\bf Definition}
\newtheorem{example}[theorem]{\bf Example}
\newtheorem{lemma}[theorem]{\bf Lemma}
\newtheorem{corollary}[theorem]{\bf Corollary}
\newtheorem{remark}[theorem]{\bf Remark}
\def\C{{\mathbb C}}
\def\R{{\mathbb R}}
\def\Z{{\mathbb Z}}
\def\P{\mathbb{P}}
\def\supp{\textup{supp}}
\title{Exponential mixing of all orders and CLT for generic birational maps of $\P^k$}
\author{Henry De Thélin}
\address{LAGA, UMR 7539, Institut Galilée, Université Paris 13, 99 avenue J.B. Clément, 93430 Villetaneuse, France}
\email{dethelin@math.univ-paris13.fr}
\author{Gabriel Vigny}
\address{LAMFA UMR 7352, Universit\'e de Picardie Jules Verne, 33 rue Saint-Leu, 80039 AMIENS Cedex 1, FRANCE}
\email{gabriel.vigny@u-picardie.fr}
\thanks{The second author's research is partially supported by the ANR QuaSiDy, grant [ANR-21-CE40-0016] }
\begin{document}
\begin{abstract} For Hénon maps, Bianchi and Dinh \cite{BianchiDinh1} recently proved the exponential mixing of all orders for  the measure of maximal entropy and, as a consequence of the recent work of Bj{\"o}rklund and Gorodnik \cite{BG}, the CLT  for H\"older observables. 
	We extend their results to generic birational maps of $\P^k$. Because of the indeterminacy set, H\"older maps are not stable under iteration, so we need to work with a suitable space of test functions.  
\end{abstract}

\maketitle

\noindent \textbf{Keywords. Generic birational maps, Exponential Mixing, Central Limit Theorem } 
\medskip

\noindent \textbf{Mathematics~Subject~Classification~(2020): 37A25, 37A50, 37F80}

\section{Introduction}

Let $(X,f)$ be a dynamical system and $\nu$ an $f$-invariant probability measure.  Take $\varphi : X \to \R$ an observable, if the system is chaotic enough, one can expect that the sequence $\varphi \circ f^n$ is asymptotically a sequence of random independent variables (they are however not independent, since they arise from a deterministic setting). For example, if $\nu$ is ergodic and  $\varphi \in L^1(\nu)$, Birkhoff 
theorem asserts that for $\nu$-a.e. $x$
\begin{equation*}
	\frac{1}{n} S_n (\varphi)  (x):= \frac{1}{n} (\varphi (x) + \varphi\circ f (x) + \dots + \varphi\circ f^{n-1} (x)) \to \langle \nu, \varphi \rangle,
\end{equation*} 
which can be thought of as the law of large numbers. The next steps are then to try to understand the speed of mixing i.e. the speed at which $\langle  \nu, \varphi\circ f^n \psi \rangle -\langle \nu,  \varphi \rangle  \langle \nu, \psi \rangle$ goes to zero for observables $\varphi$ and $\psi$ (or more general correlations) and to establish a central limit theorem for the sequence $(\varphi \circ f^n)$. \\

In the present article, we are interested in these questions for a class of birational maps $f:\P^k(\C)\dasharrow \P^k(\C)$ endowed with their Green measure. Such maps are holomorphic outside an indeterminacy set, denoted by  $I_f$, and admits a rational inverse, denoted by $f^{-1}$, outside an analytic set. Just like birational maps of $\P^2(\C)$ generalize Hénon maps, the maps we consider are natural generalizations of Hénon-Sibony maps, which are the polynomials automorphisms of $\C^k$ for which $I_{f^{-1}} \cap I_{f}=\varnothing$. Their dynamics is very well understood, see~\cite[Chapitre 2]{Sibony}. We make the assumptions that, as for Hénon-Sibony maps,
there is an integer $1\leq s\leq k-1$ such that 
\begin{align}
	\dim I_f=k-s-1 \quad \text{and} \quad \dim I_{f^{-1}}=s-1,\tag{$\dag$}\label{gooddim}
\end{align}
 and
\begin{align}
	\bigcup_{n\geq0}f^n(I_{f^{-1}})\cap I_f=\bigcup_{n\geq0}f^{-n}(I_{f})\cap I_{f^{-1}}=\varnothing.\tag{$\star$}\label{AS}
\end{align}
Let $d$ (resp. $\delta$) be the algebraic degree of $f$ (resp. $f^{-1}$), i.e. the degree of the homogeneous polynomials defining $f$. Under the hypothesis \eqref{gooddim} and \eqref{AS}, one can show that $d^s=\delta^{k-s}$ and that the algebraic degree of $f^n$ (resp. $f^{-n}$) is $d^n$ (resp. $\delta^n$), see~\cite{Dethelin_Vigny}. Inspired by the work of Bedford and Diller \cite{Bedford-Diller} in dimension $2$, we introduced for such maps a finite energy condition ~\cite[Definition 3.1.9]{Dethelin_Vigny}, which can be stated as
\begin{align}\label{energy_higher2}
	\left\{\begin{array}{l}
		\displaystyle \sum^{\infty}_{n=0} \frac{1}{d^{sn}} \int_{f^n(I_{f^{-1}})} u^+ \cdot f^*(\omega^{s-1}) >-\infty  \  \text{and} \\
		\displaystyle  \sum^{\infty}_{n=0} \frac{1}{\delta^{n(k-s)}} \int_{f^{-n}(I_f)} u^- \cdot (f^{-1})^*(\omega^{k-s-1}) >-\infty, 
	\end{array}	\right.
\end{align}
where $\omega$ is the Fubini-Study form on $\P^k(\C)$ and $u^+$ (resp. $u^-$) is a quasi-potential of $d^{-1} f^*(\omega)$, i.e. $d^{-1} f^*(\omega)=\omega + dd^c u^+$ (resp. a quasi-potential of $\delta^{-1} (f^{-1})^*(\omega)$, i.e. $\delta^{-1} (f^{-1})^*(\omega)=\omega + dd^c u^-$). Condition \eqref{energy_higher2} is satisfied by Hénon-Sibony maps and regular birational maps of $\P^k$ (\cite{DinhSibonyR}).
 Under condition \eqref{energy_higher2}, we constructed a mixing and hyperbolic measure $\mu$ of maximal entropy $s \log d$ and pluripolar sets are of $\mu$-measure zero, see~\cite{Dethelin_Vigny}. We call $\mu$ \emph{the Green measure} of $f$. Recall a set is pluripolar if, locally, it is contained in the set $\{v=-\infty\}$ for some plurisubharmonic (psh for short) function $v$. 
 As said above, such maps where previously introduced in dimension $2$ by Bedford and Diller \cite{Bedford-Diller} who already constructed the measure in that case. 

We will call \emph{generic birational map of $\P^k$} a map satisfying \eqref{gooddim}, \eqref{AS} and \eqref{energy_higher2}. These maps are generic in the following senses:
\begin{itemize}
	\item if $f$ is a birational maps satisfying \eqref{gooddim}, then, outside a pluripolar set of maps $A \in \mathrm{PSL}(k+1, \C)$, $A \circ f$ satisfies  \eqref{AS} and \eqref{energy_higher2} (\cite{Bedford-Diller} in dimension $2$ and \cite{Dethelin_Vigny} in higher dimension).
	\item if $f$ is a birational map of $\P^2$	satisfying \eqref{gooddim} and \eqref{AS} and $f$ is defined over a number field then $f$ satisfies \eqref{energy_higher2} (\cite{Jonsson-Reschke} in dimension 2 and \cite{gauthier2023complex} in higher dimension).
\end{itemize}

Take now $\varphi : \P^k \to \R$ an observable. As explain above, the purpose of this article is to show that the sequence of random variables $\varphi \circ f^n$ is asymptotically independent in a quantified way. We are mainly interested in $C^\alpha$ observables and in differences of bounded quasi-plurisubharmonic (qpsh for short) functions. Differences of qpsh functions, usually called DSH functions, were introduced by Dinh and Sibony (see \cite{Dinh_Sibony_distrib} and the survey \cite{dinhsibony2}). They are important in complex dynamics as their definition takes into account the complex structure. More precisely, take $a >0$ and consider the space of DSH functions $\varphi \in L^\infty$ with $dd^c \varphi = T_1-T_2$, where $T_i$ are positive closed currents such that $T_i=\|T_i\|( \omega+ dd^c u_i)$ with $u_i$ qpsh and $0\geq u_i \geq -a$ for $i=1,2$. We denote by $\mathrm{DSH}_a^\infty(\P^k)$ this space of test functions, which we endow with the norm
\[ \|\varphi\|_{\mathrm{DSH}_a^\infty(\P^k)}=\|\varphi\|_\infty + \inf\{ \|T_i\|, T_i \ \mathrm{as \ above}\}.\] 
We simply write $\mathrm{DSH}^\infty(\P^k)$ for a difference of bounded qpsh functions. Our first main results is the following. 
\begin{theorem}[Exponential mixing of all orders]\label{tm_exp} Let $f$ be a generic birational map. Take $a>0$. For all $ p\geq 1$ and $\alpha\leq 2$, there exists a constant $C_{p,\alpha} >0$ such that $ \forall \varphi^0, \dots, \varphi^p \in C^{\alpha}(\P^k)$ and $\forall 0=:n_0\leq n_1 \leq \dots \leq n_p$, we have 
	\begin{align*}
		&\left|  \langle \mu, \prod_{j=0}^p \varphi^j\circ f^{n_j} \rangle -   \prod_{j=0}^p\langle \mu, \varphi^j\rangle \right|\leq 	C_{p,\alpha} d^{-\frac{\alpha^{p+1}}{2^{p+1}} \frac{s}{2k}\min_{0\leq j \leq p-1}(n_{j+1}-n_j)} \prod_{i=0}^p \|\varphi^i\|_{C^\alpha} 
	\end{align*}		
	and there exists a constant $C_p$ such that for $ \forall \varphi^0, \dots, \varphi^p \in \mathrm{DSH}_a^\infty(\P^k)$ and $\forall 0=:n_0\leq n_1 \leq \dots \leq n_p$, we have 
	\begin{align*}
		&\left|  \langle \mu, \prod_{j=0}^p \varphi^j\circ f^{n_j} \rangle -   \prod_{j=0}^p\langle \mu, \varphi^j\rangle \right|\leq 	C_{p} d^{- \frac{s}{2k}\min_{0\leq j \leq p-1}(n_{j+1}-n_j)} \prod_{i=0}^p  \|\varphi^i\|_{\mathrm{DSH}_a^\infty(\P^k)}. 
	\end{align*}		
\end{theorem}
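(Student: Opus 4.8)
My plan is to establish the $\mathrm{DSH}_a^\infty$ bound first, by an induction on $p$ whose engine is a quantitative two-point estimate for $\mu$ that is uniform in the number of iterations, and then to deduce the $C^\alpha$ bound by regularization. \emph{Reduction at the shortest gap.} Given $0=n_0\le\cdots\le n_p$, put $m=\min_{0\le j\le p-1}(n_{j+1}-n_j)$ and fix $j_0$ with $n_{j_0+1}=n_{j_0}+m$; split the indices into $A=\{0,\dots,j_0\}$ and $B=\{j_0+1,\dots,p\}$. Since $\mu$ is $f$-invariant and $f$ is a bijection off a $\mu$-null set, $\langle\mu,g\circ f^{r}\rangle=\langle\mu,g\rangle$ for every $r\in\Z$; shifting time by $-n_{j_0}$ then gives
\begin{align*}
\left\langle\mu,\prod_{j=0}^p\varphi^j\circ f^{n_j}\right\rangle &=\left\langle\mu,L\cdot(R\circ f^{m})\right\rangle,\\
\left\langle\mu,\prod_{j\in A}\varphi^j\circ f^{n_j}\right\rangle &=\langle\mu,L\rangle,\qquad \left\langle\mu,\prod_{j\in B}\varphi^j\circ f^{n_j}\right\rangle=\langle\mu,R\rangle,
\end{align*}
where $L=\prod_{j\in A}\varphi^j\circ f^{n_j-n_{j_0}}$ is a product of \emph{backward} iterates of the $\varphi^j$, $R=\prod_{j\in B}\varphi^j\circ f^{n_j-n_{j_0+1}}$ a product of \emph{forward} iterates, and $\|L\|_\infty\le\prod_{j\in A}\|\varphi^j\|_\infty$, $\|R\|_\infty\le\prod_{j\in B}\|\varphi^j\|_\infty$.

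\emph{The composite two-point estimate.} The heart of the proof is the bound
\begin{align*}
\left|\langle\mu,L\cdot(R\circ f^m)\rangle-\langle\mu,L\rangle\langle\mu,R\rangle\right|\le C\,d^{-\lambda m}\prod_{j=0}^p\|\varphi^j\|,
\end{align*}
with $\lambda=\tfrac{s}{2k}$ when $\varphi^j\in\mathrm{DSH}_a^\infty(\P^k)$, $\lambda=(\alpha/2)^{p+1}\tfrac{s}{2k}$ when $\varphi^j\in C^\alpha(\P^k)$, and --- crucially --- $C$ \emph{independent of the} $n_j$. This is exactly the difficulty advertised in the abstract: $L,R$ contain $f^{\pm\ell}$ with $\ell$ unbounded, so they lie in neither $C^\alpha$ nor $\mathrm{DSH}_a^\infty$ with controlled norm, and the naive $\mathrm{DSH}$-norm of $\varphi^j\circ f^{\pm\ell}$ inflates like $d^{\ell}$. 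One circumvents this by working with the Green currents rather than with norms of $L,R$: from $\mu=T_+^{s}\wedge T_-^{k-s}$ one approximates $T_+^{s}$ and $T_-^{k-s}$ by the normalized iterates $d^{-\ell s}(f^\ell)^*\omega^{s}$ and $\delta^{-\ell(k-s)}(f^\ell)_*\omega^{k-s}$ --- whose exponential convergence is exactly what the finite energy condition \eqref{energy_higher2} and the construction of \cite{Dethelin_Vigny} furnish --- and uses that $L$, built from $f^{-1}$-iterates, is compatible with $T_-^{k-s}$ while $R$, built from $f$-iterates, is compatible with $T_+^{s}$, the degree factors cancelling thanks to $d^{s}=\delta^{k-s}$. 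Peeling the factors of $L$ and $R$ one at a time this way transfers the iterations off the test functions and onto the currents, uniformly in $\ell$, reducing the correlation to an ordinary two-point correlation $\langle\mu,u\cdot(v\circ f^m)\rangle$ for bounded qpsh $u,v$ of controlled $\mathrm{DSH}$-norm, which is estimated by the $dd^c$/super-potential method of Dinh--Sibony and yields the rate $d^{-\frac{s}{2k}m}$. When a factor comes from a $C^\alpha$ rather than a $\mathrm{DSH}_a^\infty$ observable it need not be $\mathrm{DSH}$ at all and must first be regularized at a scale $\varepsilon$; balancing the approximation error $\varepsilon^{\alpha}$ against the $\varepsilon^{\alpha-2}$-inflation of the $\mathrm{DSH}$-norm of the regularization at that single step turns the running exponent $c$ into $\tfrac{\alpha}{2}c$, so running over all $p+1$ factors produces the factor $(\alpha/2)^{p+1}$. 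I expect this step --- making the transfer quantitative and uniform in the number of iterations --- to be the main obstacle; it is where one is forced to work in the ``suitable space of test functions'' of the abstract.

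\emph{Induction and conclusion.} Combining the two previous steps,
\begin{align*}
\left|\langle\mu,\prod_{j}\varphi^j\circ f^{n_j}\rangle-\langle\mu,\prod_{j\in A}\varphi^j\circ f^{n_j}\rangle\langle\mu,\prod_{j\in B}\varphi^j\circ f^{n_j}\rangle\right|\le C\,d^{-\lambda m}\prod_{j}\|\varphi^j\|.
\end{align*}
Since $1\le|A|,|B|\le p$ and the shortest gap inside each block is $\ge m$, the induction hypothesis applies to the two blocks; as the exponents $\tfrac{s}{2k}$, resp.\ $(\alpha/2)^{q+1}\tfrac{s}{2k}$, are non-increasing in the number $q$ of gaps, every exponent occurring is $\ge\lambda$, so inserting the two block estimates, bounding the spurious products of means by $\prod\|\varphi^j\|_\infty\le\prod\|\varphi^j\|$, and collecting terms yields the theorem, with $C_p$ (resp.\ $C_{p,\alpha}$) built recursively from the constants of the two-point step and depending only on $p$ (resp.\ $p,\alpha$) and $a$; the base case $p=1$ is ordinary exponential mixing of $\mu$, i.e.\ the two-point step with $A=\{0\},B=\{1\}$. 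As a variant, once the $\mathrm{DSH}_a^\infty$ bound is known the $C^\alpha$ bound also follows in one stroke: smooth each $\varphi^j$ to $\varphi^j_\varepsilon$ with $\|\varphi^j-\varphi^j_\varepsilon\|_\infty\lesssim\varepsilon^{\alpha}\|\varphi^j\|_{C^\alpha}$ and $\|\varphi^j_\varepsilon\|_{\mathrm{DSH}_{a_0}^\infty}\lesssim\varepsilon^{\alpha-2}\|\varphi^j\|_{C^\alpha}$ (a $C^2$ function being a difference of two bounded qpsh functions with quasi-potentials in a fixed range $[-a_0,0]$), substitute, apply the $\mathrm{DSH}$ bound, and optimize in $\varepsilon$; this recovers the stated exponent with room to spare.
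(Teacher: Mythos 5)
Your overall philosophy is the right one (transfer the iterations from the observables onto the Green currents via super-potentials, exploit that forward iteration is compatible with $T^+$ and backward iteration with $T^-$, use $d^s=\delta^{k-s}$), and your deduction of the H\"older case from the $C^2$/DSH case by mollification and optimization in $\varepsilon$ is sound and even yields a better exponent than $(\alpha/2)^{p+1}$. But the engine of your induction --- the ``composite two-point estimate'' $|\langle\mu,L\cdot(R\circ f^m)\rangle-\langle\mu,L\rangle\langle\mu,R\rangle|\le C d^{-\lambda m}\prod\|\varphi^j\|$ with $L$ and $R$ products of unboundedly many backward, resp.\ forward, iterates --- is asserted, not proved, and the specific mechanism you invoke for it does not exist. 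You claim the peeling ``reduces the correlation to an ordinary two-point correlation $\langle\mu,u\cdot(v\circ f^m)\rangle$ for bounded qpsh $u,v$ of controlled $\mathrm{DSH}$-norm''; no such reduction is available, and it is precisely its unavailability (the $\mathrm{DSH}$-norm of $\varphi\circ f^{\pm\ell}$ inflates like $d^{\ell}$, as you note) that forces the introduction of the spaces $\mathcal{A}_\pm$ and a different combinatorial scheme. Since the base and inductive steps of your argument both rest on this unproved estimate, the proof as written has a genuine gap at its core.

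The paper's route is organized so that this two-sided composite estimate is never needed. Instead of splitting at the shortest gap into two blocks and inducting, one telescopes: the difference $\langle\mu,\prod_j\varphi^j\circ f^{n_j}\rangle-\prod_j\langle\mu,\varphi^j\rangle$ is written as a sum over $i$ of terms in which only the \emph{single} observable $\varphi^i$ is decorrelated from the product $\prod_{j>i}\varphi^j\circ f^{n_j-n_i}$ (Proposition~\ref{prop-principale}, which is deliberately asymmetric: $\|\varphi^0\|_{\mathcal{A}^+}$ on one side, $\|\varphi^j\|_{\mathcal{A}^-}$ on the other). The peeled observable is shifted backward by only the half-gap $m=[s(n_{i+1}-n_i)/k]$, so its $\mathcal{A}_+$-norm inflation is affordable; the remaining product of forward iterates is never normed as a single test function. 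Its structure is dismantled inside the estimate: working on $\P^k\times\P^k$ with the diagonal and its quasi-potential, expanding $dd^c$ of the product, and applying Cauchy--Schwarz isolates individual terms $(f^{n'_\ell})^*(d\varphi^\ell\wedge d^c\varphi^\ell)\le|\varphi^\ell|_1^-(f^{n'_\ell})^*(R^{\ell,-})$, and the uniform super-potential bound \eqref{eqbound_U-} on $\mathcal{U}_{T^-_{k-s}}\bigl(d_{s+1}^{-n}(f^n)^*(R\wedge T^+_s)\bigr)$, valid for every $(H,a)$-regular $R$, supplies the decay $\delta^{-n'_\ell/2}\le d^{-s(n_1-n_0)/2k}$. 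If you want to salvage your block-splitting scheme you would have to prove a genuinely two-sided version of Proposition~\ref{prop-principale} with products of iterates on both factors; the quickest repair is to adopt the telescoping decomposition instead.
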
	
For Sibony-Hénon maps, this result was first proved  for $C^\alpha$-observables by Dinh \cite{Dinh_decay} for $p=1$ and recently by Bianchi and Dinh \cite{BianchiDinh1} for arbitrary $p$, and for $p=1$ by Wu for bounded DSH observables \cite{Hao_decay} (note that Wu also proved a version for non bounded DSH observables though, in our case, DSH functions could be non-integrable with respect to $\mu$). For generic birational maps, the second author showed the exponential mixing of order $p=1$ for $C^\alpha$-observables \cite{Vigny_decay}. \\

Let us now give an application of Theorem~\ref{tm_exp} to the statistical properties of $f$. For $\varphi\in L^1 (\mu)$, recall that we write 
 \[ S_n (\varphi)(x):=
 \big( \varphi (x)+ \varphi \circ f(x) + \dots + \varphi \circ f^{n-1}(x) \big). \]
  We say that $\varphi$ \emph{satisfies the Central Limit Theorem}
 (CLT)
 with variance $\sigma^2 \geq 0$ with respect to $\mu$
 if $n^{-1/2} (S_n (\varphi)  - n\langle\mu,\varphi\rangle )\to \mathcal N(0,\sigma^2)$ in law,
 where $\mathcal N (0,\sigma^2)$
 denotes the
 (possibly degenerate, for $\sigma =0$)
 Gaussian distribution 
 with mean 0 and variance $\sigma^2$, i.e., 
 for any interval $I \subset \mathbb R$ we have
 \[
 \lim_{n\to \infty} \mu \Big\{
 \frac{S_n (\varphi) - n\langle\mu,\varphi\rangle }{\sqrt{n} } \in I
 \Big\}
 =
 \begin{cases}
 	1 \mbox{ when } I \mbox{ is of the form } I=(-\delta,\delta)  & \mbox{ if } \sigma^2=0,\\
 	\frac{1}{\sqrt{2\pi\sigma^2}}\displaystyle\int_I e^{-t^2 / (2\sigma^2)} dt &  \mbox{ if }
 	\sigma^2 >0.
 \end{cases} \]

   Bj{\"o}rklund and Gorodnik \cite{BG} recently showed that the exponential mixing of all orders implies the CLT. Applying that result, Bianchi and Dinh \cite{BianchiDinh1} deduced that H\"older observables satisfy the CLT for the measure of maximal entropy, which was a long standing question in complex dynamics of Hénon maps (see also \cite{BianchiDinh2} for the case of the measure of maximal entropy of automorphisms of compact Kähler manifolds). Following that strategy, we prove: 
\begin{theorem}[CLT for Hölder and $\mathrm{DSH}^\infty(\P^k)$ observables]\label{tm_CLT} Let $f$ be a generic birational map as above and $\alpha >0$. Take $\varphi \in C^\alpha(\P^k) \cup \mathrm{DSH}^\infty(\P^k)$, then $\varphi$ satisfies the CLT with respect to $\mu$ with 
	\[\sigma^2
	=
	\sum_{n\in \mathbb Z} \langle\mu, \tilde \varphi (\tilde \varphi \circ f^n) \rangle
	=
	\lim_{n\to \infty}
	\frac{1}{n}\int_{X}
	(\tilde \varphi+ \tilde \varphi \circ f + \ldots + \tilde \varphi \circ f^{n-1})^2 d\mu,
	\]
	where $\tilde \varphi := \varphi-\langle\mu,\varphi\rangle$.
\end{theorem}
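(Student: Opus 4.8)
The plan is to derive Theorem~\ref{tm_CLT} from Theorem~\ref{tm_exp} via the abstract result of Bj{\"o}rklund and Gorodnik \cite{BG}, following the strategy of Bianchi and Dinh \cite{BianchiDinh1}. The Bj{\"o}rklund--Gorodnik criterion states, roughly, that if a measure-preserving system $(X,f,\mu)$ is exponentially mixing of all orders \emph{on a given class of observables} (with the multiplicative constant and rate uniform in $p$ in the appropriate sense, and the estimate controlled by a fixed norm on that class), then every observable in that class satisfies the CLT with the stated variance $\sigma^2$. So the core of the argument is a verification step: check that the quantitative statement of Theorem~\ref{tm_exp} is exactly of the form required to feed into \cite{BG}. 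Concretely, I would fix $\varphi$ (either in $C^\alpha(\P^k)$ or in $\mathrm{DSH}^\infty(\P^k)$), replace it by $\tilde\varphi = \varphi - \langle\mu,\varphi\rangle$ so that the observable is centered, and observe that centering changes neither the $C^\alpha$-norm up to a constant nor the $\mathrm{DSH}_a^\infty$-norm (a constant is qpsh with $dd^c = 0$), so $\tilde\varphi$ still lies in the same test-function class with comparable norm.

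Next I would handle the two cases separately but in parallel. For $\varphi \in C^\alpha(\P^k)$: if $\alpha \leq 2$ we are directly in the scope of the first estimate of Theorem~\ref{tm_exp}; if $\alpha > 2$ we simply note $C^\alpha(\P^k) \subset C^2(\P^k) \subset C^{\alpha'}(\P^k)$ for any $\alpha' \leq 2$, so after shrinking $\alpha$ we may assume $\alpha \leq 2$ and apply the theorem with rate $d^{-c(\alpha)\min_j(n_{j+1}-n_j)}$ where $c(\alpha) = \frac{\alpha^{p+1}}{2^{p+1}}\frac{s}{2k}$. For $\varphi \in \mathrm{DSH}^\infty(\P^k)$: here one must first note that $\mathrm{DSH}^\infty(\P^k) = \bigcup_{a>0}\mathrm{DSH}_a^\infty(\P^k)$, so any bounded DSH function lies in some $\mathrm{DSH}_a^\infty(\P^k)$ and the second estimate of Theorem~\ref{tm_exp} applies with rate $d^{-\frac{s}{2k}\min_j(n_{j+1}-n_j)}$, independent of $\alpha$. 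In both cases the hypotheses of the Bj{\"o}rklund--Gorodnik theorem on exponential multiple mixing hold, so the CLT follows, and the formula for the variance $\sigma^2$ is precisely the one produced by that theorem; the equivalence of the series expression $\sum_{n\in\mathbb Z}\langle\mu,\tilde\varphi(\tilde\varphi\circ f^n)\rangle$ with the limit of normalized $L^2$-norms of Birkhoff sums is a standard computation using stationarity of $(\tilde\varphi\circ f^n)$ under $\mu$ and absolute summability of the correlations, the latter being guaranteed by the $p=1$ case of Theorem~\ref{tm_exp}.

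The main obstacle I anticipate is purely bookkeeping rather than conceptual: one must check that the \emph{precise form} of the bound in \cite{BG} matches ours, in particular that the way the constant $C_{p,\alpha}$ (resp. $C_p$) is allowed to depend on $p$ and the decay rate degenerates as $p\to\infty$ (the exponent $\alpha^{p+1}/2^{p+1}$ shrinks geometrically in the $C^\alpha$ case) is still compatible with the summability conditions that \cite{BG} imposes across all orders $p$. For the $\mathrm{DSH}_a^\infty$ case the rate $\frac{s}{2k}$ is uniform in $p$, which is the cleanest situation; for the $C^\alpha$ case one should record that the Bj{\"o}rklund--Gorodnik hypotheses tolerate a rate depending on $p$ as long as it stays positive for every fixed $p$, which it does. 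A secondary, genuinely analytic point to address is that a $\mathrm{DSH}^\infty$ observable need not be continuous, so one cannot quote versions of \cite{BG} requiring continuity; but since $\mu$ gives no mass to pluripolar sets (by \cite{Dethelin_Vigny}, recalled in the introduction) and bounded DSH functions are $\mu$-integrable and, more relevantly, the multiple-mixing estimates we have already established make no continuity assumption, this causes no difficulty. I would therefore structure the proof as: (i) reduce to centered observables; (ii) place $\varphi$ in the correct test class with controlled norm; (iii) invoke Theorem~\ref{tm_exp} to get exponential mixing of all orders with an explicit rate; (iv) apply \cite[main theorem]{BG} to conclude the CLT; (v) identify the variance, noting $\sigma^2 \geq 0$ and the two equivalent expressions.
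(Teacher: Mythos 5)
There is a genuine gap: you treat the passage from Theorem~\ref{tm_exp} to \cite{BG} as a ``verification step'' that is ``purely bookkeeping,'' but the hypotheses of the Bj{\"o}rklund--Gorodnik theorem are \emph{not} satisfied by Theorem~\ref{tm_exp} alone. Their argument (specifically their Lemma~9.1) does not merely apply the multiple-mixing estimate to the fixed observables $\varphi^j=\varphi$; it groups the times $n_1,\dots,n_r$ into clusters and applies the mixing bound to the \emph{products of translates} $\varphi_m=\prod_{i\in I\cap J_m}\varphi\circ f^{-n_i}$. This forces the test class to be a normed algebra that is invariant under both $f^*$ and $f_*$ with at most exponential norm growth. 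None of the classes you propose has this property here: $C^\alpha(\P^k)$ is not stable under composition with $f$ or $f^{-1}$ because of the indeterminacy sets (which may even be dense in $\supp\mu$), and the space $\mathcal{A}=\mathcal{A}^+\cap\mathcal{A}^-$ built in \S\ref{Section space} is not invariant either --- only $\mathcal{A}^+$ is stable under $f^*$ and only $\mathcal{A}^-$ under $f_*$ (Proposition~\ref{prop_stability}). The obstacles you do flag (the $p$-dependence of $C_{p,\alpha}$, continuity of DSH observables) are not where the difficulty lies.

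The paper's resolution is precisely the content of Proposition~\ref{prop-principale} and Corollary~\ref{lemma 9.1}: one proves an \emph{asymmetric} mixing estimate in which the observable attached to the largest time is measured in $\|\cdot\|_{\mathcal{A}^+}$ and all the others in $\|\cdot\|_{\mathcal{A}^-}$, and then one re-proves Lemma~9.1 of \cite{BG} by hand. In that re-proof, the cluster containing the largest time is recentered so that its product of translates involves only forward iterates and hence lands in $\mathcal{A}^+$ with norm at most $(20\|\varphi\|_{\mathcal{A}^+})^{|I\cap J_1|}d^{\alpha|I\cap J_1|}$, while each remaining cluster is recentered so as to land in $\mathcal{A}^-$ with norm controlled by $\delta^{\alpha|I\cap J_m|}$; an induction over the clusters then yields the bound $\ll_r e^{-(\beta\kappa-r\alpha\max(\log d,\log\delta))}\|\varphi\|_{\mathcal{A}}^{|I|}$ that the rest of \cite{BG} needs. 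Without this step your argument stops at ``apply \cite{BG},'' which cannot be done directly. (A secondary point: for $\alpha<2$ a H\"older function need not lie in $\mathcal{A}$, so the H\"older case of the CLT also requires an approximation/interpolation argument beyond quoting Theorem~\ref{tm_exp}.)
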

  To apply \cite{BG} directly, one has to define an algebra of test functions which is stable under iteration for both $f$ and $f^{-1}$. In \cite{BianchiDinh1}, the authors naturally chose for Hénon-Sibony maps the space of functions which are $C^2$ in a neighborhood of $\supp (\mu)$ (for all $n\in \Z$, the indeterminacy set of $f^n$ is disjoint from $\supp (\mu)$ in that case). In our case, the indeterminacy sets may be dense in $\supp (\mu)$, so being $C^2$ in a neighborhood of $\supp(\mu)$ is not stable under iteration. For this reason, in §\ref{Section space}, we define a space of test functions in terms of currents: we ask that $d \varphi \wedge d^c \varphi \leq R$  and $\pm dd^ c \varphi \leq R'$ for some positive closed currents $R, R'$ whose potentials are dominated by the potentials of the Green currents $T^{\pm}$. Note that the condition $d \varphi \wedge d^c \varphi \leq R$ implies in particular that our observables belong to the complex Sobolev space $W^*$ introduced by Dinh and Sibony in \cite{DinhSibonyW} to prove the exponential mixing of order 1 for meromorphic map of large topological degree. In §\ref{Section Preliminary}, we adapt the strategy of \cite{Vigny_decay} to get the necessary estimates for Theorem~\ref{tm_exp}. The space of observables we consider is not stable under iteration but it can be written as the intersection of two spaces, one stable under forward iteration, the other under backward iteration. In §\ref{Section proofs}, we first establish Proposition~\ref{prop-principale}, which is a version of the exponential mixing of all orders where we separate the observables according to the above spaces stable for $f$ and for $f^{-1}$. This allows us to prove Theorem~\ref{tm_exp}, and also to check that the proof of \cite{BG} still applies here, which is the content of Corollary~\ref{lemma 9.1}.  

\section{Background on generic birational maps of $\P^k$ and super-potentials theory} 
We start by recalling basic facts on the super-potentials theory of Dinh and Sibony \cite{superpotentiels} (and the appendix of \cite{Dethelin_Vigny}).
Recall that $\omega$ denotes the Fubini-Study form in $\P^k$. 
Let $S$ be a positive closed $(q,q)$ current on $\P^k$. Its \emph{mass} $\|S\|$ is defined by $\|S\|:= \int_{\P^k} S\wedge \omega^{k-q}$.

Let
$\mathcal{C}_q$ denote the set of positive closed currents of mass $1$ and bidegree $(q,q)$ in
$\P^k$ for $0\leq q \leq k$. For $T \in \mathcal{C}_q$, we consider some quasi-potential $U_T$ of
$T$ (that is $T=\omega^q+dd^cU_T$, see \cite{superpotentiels}[Theorem 2.3.1]). We define the
super-potential $\mathcal{U}_T$ associated to the quasi-potential $U_T$  as the function on
$\mathcal{C}_{k-q+1}$ defined for $S$ smooth by:
$$\mathcal{U}_T(S):=\langle U_T, S\rangle.$$
This definition can be extended by sub-harmonicity along the structural varieties to any
$S \in \mathcal{C}_{k-q+1}$ allowing the value $-\infty$. The \emph{mean} of a super-potential is then
$m_T:=\mathcal{U}_T(\omega^{k-q+1})$ and the super-potential of a current is uniquely defined by
its mean. This is one of the strengths of super-potentials: quasi-potentials differ by a
$dd^c$-closed form whereas super-potentials are
defined up to a constant. We say that a sequence $T_n \in \mathcal{C}_q$ \emph{converges to $T$ in
	the Hartogs' sense}  (or $H$-converges) if $T_n \to T$ in the sense of currents and if we
can choose super-potentials $(\mathcal{U}_{T_n})$ and $\mathcal{U}_{T}$ such that $m_{T_n} \to m_T$ and
$\mathcal{U}_{T_n} \geq \mathcal{U}_T$ for all $n$. In that case, if $S_n \to S$ in the Hartogs' sense in
$\mathcal{C}_{k-q+1}$, then $\mathcal{U}_{T_n}(S_n) \to  \mathcal{U}_{T}(S)$.
A current is said to be more \emph{ $H$-regular} than another one if, choosing suitable means,
its super-potential is greater than the other. All the classical tools (intersection,
pull-back), well defined for smooth forms can be extended as continuous objects for the
Hartogs' convergence. Any such operation, well defined for a given current $T$, is also
well defined for a more $H$-regular current $T'$. More precisely:
\begin{itemize}
	\item We say that $T\in \mathcal{C}_q$ and $S\in \mathcal{C}_{k-q-r}$ are wedgeable if $\mathcal{U}_{T}(S\wedge
	\Omega) >-\infty$ for some smooth $\Omega \in \mathcal{C}_{r+1}$. That condition is symmetric in
	$T$ and $S$ and, if it is satisfied, we can define the wedge product $T \wedge S$ with the
	above continuity property.
	If $T'$ and $S'$ are more $H$-regular than $T$ and $S$ then $T'$ and $S'$ are wedgeable and
	$T' \wedge S'$ is more $H$-regular than $T \wedge S$.
	If $R\in \mathcal{C}_r$, $S\in \mathcal{C}_s$ and $T\in \mathcal{C}_t$ ($r+s+t \leq k$) are such that $R$ and $S$
	are wedgeable and $R\wedge S$ and $T$ are wedgeable then the wedge product $R \wedge S
	\wedge T$ is well defined and that property is symmetric in $R$, $S$ and $T$.
	\item Similarly, for $T \in \mathcal{C}_q$, we say that $T$ is $f^*$-admissible if its
	super-potential is finite at some current of the form $d_{q+1}^{-1}f_*(S)$ for $S\in
	\mathcal{C}_{k-q+1}$ smooth near $I(f)$ ($d_{q+1}$ is the normalization so that 
	$d_{q+1}^{-1}f_*(S)$ is of mass $1$). For such current, we can define its pull-back with
	the above continuity property. If $T'$ is more $H$-regular than $T$ then $T'$ is also 
	$f^*$-admissible and  $d_q^{-1}f^*(T')$ is more $H$-regular than $d_q^{-1}f^*(T)$.
\end{itemize}

Let $f$ be a generic birational map of $\P^k$, hence it satisfies \eqref{gooddim}, \eqref{AS} and \eqref{energy_higher2}. We now give the properties of $f$ we will need (see \cite{Dethelin_Vigny}[Theorem~3.2.8, Theorem 3.4.1,  Corollary
3.4.11, Theorem 3.4.13]). For any $j \leq s$ and $j'\leq k-s$, we can define the \emph{Green currents} $T^+_j$ and $T^-_{j'}$ by 
\[ T^+_j := \lim_n d^{-jn} (f^n)^*(\omega^j)  \ \mathrm{and} \ T^-_{j'} := \lim_n \delta^{-j'n} (f^n)_*(\omega^{j'}),\]
where the convergences are in the Hartogs' sense \cite{Dethelin_Vigny}. For $j=j'=1$, we simply write $T^+$ and $T^-$. Furthermore, let $u^+$ (resp. $u^-$) be a negative quasi-potential of $d^{-1} f^*(\omega)$ (resp. $\delta^{-1} f_*(\omega)$), i.e. $d^{-1}f^*(\omega) =\omega + dd^c u^+$  (resp. $\delta^{-1}f_*(\omega) =\omega + dd^c u^-$). Then, $u^+_\infty:=\sum_n d^{-n}u^+ \circ f^n $ and $u^-_\infty:= \sum_n \delta^{-n}u^- \circ f^{-n}$ define negative quasi-potentials of $T^+$ and $T^-$ (e.g. \cite{Dethelin_Vigny}). 
 Then, $T^+_s\in \mathcal{C}_s$ is equal to $(T^+)^s$ and is invariant $f^*T^+_s=d^sT^+_s$ (similarly
$T^-_{k-s}=(T^-)^{k-s} \in \mathcal{C}_{k-s}$ satisfies $f_*T^-_{k-s}=\delta^{k-s}T^-_{k-s}$).
Finally, $T^+_s$ and $T^-_{k-s}$ are wedgeable and the intersection $\mu:=T_s^+ \wedge
T^-_{k-s}$ is an invariant probability measure that we call the Green measure. We have that $\mu $ integrates the
quasi-potentials of $T^+$ and $T^-$ (and thus $\log \mathrm{dist}(x, I^+)$ and $\log
\mathrm{dist}(x, I^-)$). In the formalism of super-potentials, this means that
$\mathcal{U}_{T^+}(T^+_s \wedge T^-_{k-s})>-\infty$.  The above arguments on Hartogs' regularity
imply that for any $S\in \mathcal{C}_1$ more $H$-regular than $T^+$ (in particular for $d^{-n}
(f^n)^*(\omega)$, $\omega$ and the current $T_n$ defined below) then $S$ and $T^+_s$ are
wedgeable. In that case, $S \wedge T^+_s$ is $(f^n)^*$-admissible and, for any $Q$ more
$H$-regular than $T^-_{k-s}$, $\mathcal{U}_S( T^+_s\wedge Q)$ is finite.
The measure $\mu$ gives no mass to pluripolar sets (hence to proper analytic sets). \\

We shall need the following quantified version of $H$-regularity. 
  \begin{definition}
	Let $a\in \R^+$. We say that $S \in \mathcal{C}_1$ is \emph{more $(H,a)$-regular} than $T^+$ (resp. $T^-$) if there exists a quasi-potential $U_S$ of $S$ such that $0\geq U_S \geq u^+_\infty-a$ (resp. $0\geq U_S \geq u^-_\infty-a$). 
\end{definition} 
Take $S\in \mathcal{C}_1$, a $C^2$ current, then, provided $a$ is large enough, $S$ is more $(H,a)$-regular than $T^+$ and $T^-$.

\section{A space of test functions}\label{Section space}
 Let us recall the definition and basics properties of DSH functions   (\cite{dinhsibony2}). A measurable function $\varphi$ is in $\mathrm{DSH}(\P^k)$ if it can be written as $u_1-u_2$ where $u_i$ are qpsh functions. In particular, $dd^c \varphi = T_1- T_2$ where $T_i$ are positive closed currents. Similarly, if a function $\varphi\in L^1_{\mathrm{loc}}$ satisfies $dd^c\varphi= T_1-T_2$ for such currents, it admits a representative $\tilde{\varphi}$ (i.e. $\varphi \stackrel{a.e.}{=}\tilde{\varphi}$) such that $\tilde{\varphi}\in \mathrm{DSH}(\P^k)$. In particular, for all $x$ outside a pluripolar set of $\P^k$, by the submean inequality for psh functions and upper-semi continuity of such functions, we have that $\tilde{\varphi}(x)$ is determined by
 \begin{equation}\label{eq_mean1} \lim_{r\to 0} \frac{1}{|B(x,r)|}\int_{B(x,r)} |\tilde{\varphi}(y)-\tilde{\varphi}(x)| d\lambda(y)=0. \end{equation}
where $B(x,r)$ is the ball of center $x$ and radius $r$, $\lambda$ is the Lebesgue measure and $|B(x,r)|=\lambda(B(x,r))$ (we are working in a chart but this property does not depend on the choice of such chart).
 As we can see, an important feature of DSH functions is that they are well defined outside a pluripolar set.
\begin{definition}\label{def_A}
 Let $a \in \R^+$.  Consider
	\begin{align*}
	\mathcal{A}_+^a:= \Big\{ & \varphi \in L^\infty(\P^k) \cap W^{1,2}(\P^k) \cap \mathrm{DSH}(\P^k), \mathrm{and}\\ 
	 & a) \ \exists R_1^+\in \mathcal{C}_1, \ \mathrm{more} \ (H,a)\mathrm{-regular\ than} \ T^+,  \exists C^+_1 \geq 0, \ d \varphi \wedge d^c \varphi \leq C_1^+ R_1^+, \\
&c) \ \exists R_2^+\in \mathcal{C}_1, \ \mathrm{more} \ (H,a)\mathrm{-regular\ than} \ T^+,  \exists C^+_2 \geq 0, \ 0 \leq C_2^+ R_2^+\pm dd^c \varphi\Big\}.
	\end{align*}
		\begin{align*}
		\mathcal{A}_-^a:= \Big\{ & \varphi \in L^\infty(\P^k) \cap W^{1,2}(\P^k) \cap \mathrm{DSH}(\P^k), \mathrm{and}\\ 
		&b) \ \exists R_1^-\in \mathcal{C}_1, \ \mathrm{more} \ (H,a)\mathrm{-regular\ than} \  T^-,  \exists C^-_1 \geq 0, \ d \varphi \wedge d^c \varphi \leq C_1^- R_1^-,\\
		&d) \ \exists R_2^-\in \mathcal{C}_1, \ \mathrm{more} \ (H,a)\mathrm{-regular\ than} \  T^-,  \exists C^-_2 \geq 0, \  0\leq C_2^- R_2^- \pm dd^c \varphi.	\Big\}
	\end{align*}
	and $\mathcal{A}^a:= 	\mathcal{A}_+^a \cap 	\mathcal{A}_-^a$.
\end{definition} 
\begin{remark}\label{rm_outside} \normalfont Let $\varphi \in \mathcal{A}_\pm^a$. Since $T^\pm$ give no mass to pluripolar sets, if $R$ is a positive closed $(1,1)$ current which is more $H$-regular than $T^\pm$, then $R$  gives no mass to any algebraic set $Z$ (if not, $T^\pm$ would have uniformly $>0$ Lelong numbers along $Z$ so it would also give a positive mass to $Z$ which we know not to hold). As a consequence, let us show that it is enough to ask for all the above inequalities to hold outside a strict algebraic set. 
	
	First, as $\varphi \in W^{1,2}(\P^k)$, $d\varphi \wedge d^c \varphi$ does not charge sets of zero Lebesgue measure (hence strict algebraic sets).
	 
	  Similarly, assume for example that $0 \leq C_2^+ R_2^++ dd^c \varphi$ only holds outside an algebraic set $Z$, we would have, on a neighborhood of a point $p \in Z$ that $ C_2^+ R_2^+ = dd^c \psi$ for some psh function $\psi$ so that $\psi + \varphi$ coincides with a psh function $\Psi$ outside a pluripolar set of zero Lebesgue measure (containing $Z$). Regularizing by a radial approximation of the unity, we see that $\Psi$ is bounded from above as both $\varphi$ and $\psi$ are so it extends to a psh function on the above neighborhood of $p$  \cite[Chapter I, Theorem 5.24]{Demailly}. As this extension is also bounded from  below by a local potential of $ T^+$ ($\varphi$ is bounded and $\psi$ is bounded from below by a local potential of $ T^+$), its $dd^c$ gives no mass to $Z$. We deduce that $ C_2^+ R_2^++ dd^c \varphi$ actually defines a positive closed current on the whole $\P^k$ which gives no mass to $Z$. In particular, $ 0\leq C_2^+ R_2^++ dd^c \varphi$ holds on $\P^k$.   
\end{remark}

\begin{remark} \normalfont Note that a function $\varphi$ satisfying condition c) in $\mathcal{A}_+^a$ admits a DSH representative since $dd^c \varphi= C^+_2 R_2^+ + dd^c \varphi -  C^+_2 R_2^+$. So, taking  $\varphi \in L^\infty(\P^k) \cap W^{1,2}(\P^k)$ satisfying a) and c) allows to define an element in $\mathcal{A}_+^a$ by taking such a representative (and similarly for $\mathcal{A}_-^a$). In what follows, we check that such choice is compatible with the different operations we consider (for example, it is clear that the sum of representatives is the representative of the sum). 
\end{remark}

\begin{remark}\label{defined_up_to_a_pp} \normalfont Take $\varphi \in \mathcal{A}^a_+$ and consider a probability measure $\nu$ which gives no mass to pluripolar sets (for example, the measure $\mu$ defined above). As $\nu$ does not charge pluripolar set, $\langle \nu, \varphi\rangle$ is well defined and by \eqref{eq_mean1}, we have that for $\nu$-a.e. $x$, $|\varphi(x)|\leq \| \varphi\|_\infty$. In other words, $\|\varphi\|_{\infty, \nu} \leq \|\varphi\|_{\infty}$. 
\end{remark}

\begin{definition}
	 Let $\varphi \in \mathcal{A}_+^a$ and $\psi \in \mathcal{A}_-^a$. We denote:
	 \begin{equation*}\label{def_norme_pm}
	 	\|\varphi\|_{\mathcal{A}_+^a}:= \| \varphi\|_\infty + \sqrt{|\varphi|_1^+ } +|\varphi|_2^+ \ \mathrm{and} \ 	\|\psi\|_{\mathcal{A}_-^a}:= \| \psi\|_\infty +  \sqrt{|\psi|_1^- } +|\psi|_2^- 
	 \end{equation*}
	 where \begin{itemize}
	 	\item $|\varphi|_1^+:= \inf\{ C_1^+, \  C_1^+ \ \mathrm{satisfies} \ a) \}= \min\{ C_1^+, \  C_1^+ \ \mathrm{satisfies} \ a) \}$;
	 	\item $|\psi|_1^-:= \inf\{ C_1^-, \  C_1^- \ \mathrm{satisfies} \ b) \}= \min\{ C_1^-, \  C_1^- \ \mathrm{satisfies} \ b) \}$;
	 	\item $|\varphi|_2^+:= \inf\{ C_2^+, \  C_2^+ \ \mathrm{satisfies} \ c) \}= \min\{ C_2^+, \  C_2^+ \ \mathrm{satisfies} \ c) \}$;
	 	\item $|\psi|_2^-:= \inf\{ C_2^-, \  C_2^- \ \mathrm{satisfies} \ d) \}= \min\{ C_2^-, \  C_2^- \ \mathrm{satisfies} \ d) \}$.
	 \end{itemize}
	 For $\varphi \in \mathcal{A}^a$, we denote 
	 \begin{equation*}\label{def_norme_pm2}
		\|\varphi\|_{\mathcal{A}^a}:= \| \varphi\|_\infty + \sqrt{|\varphi|_1^+ } + \sqrt{|\varphi|_1^- }+|\varphi|_2^+ +|\varphi|_2^- 
	\end{equation*}

	 \end{definition}
\begin{remark}\normalfont The above infima are indeed minima by taking limits in the sense of currents.  
\end{remark}
Observe that $\mathcal{A}_\pm^a \subset \mathcal{A}_\pm^{a'}$ with $\|\varphi\|_{\mathcal{A}^a} \geq \|\varphi\|_{\mathcal{A}^{a'}}$ for $a \leq a'$. The following is essentially classical (\cite{superpotentiels}) and follows from an approximation of unity; we give the proof for the sake of completeness. 
\begin{proposition}\label{prop_regularization}
	Let $a< a'$. Take  $\varphi \in \mathcal{A}_\pm^a$ (resp. $\varphi \in \mathcal{A}^a$), then there exists a sequence $\varphi_n$ of smooth functions such that
	\begin{enumerate}
		\item $\varphi_n \to \varphi$ in $W^{1,2}$ and $\varphi_n(x) \to \varphi(x)$ for all $x$ outside a pluripolar set. Furthermore, if $\varphi$ is qpsh, one can choose $(\varepsilon_n)$ a sequence that decreases to $0$ such that $\varphi_n +\varepsilon_n$ decreases to $\varphi$.
		\item $\limsup \|\varphi_n\|_{\mathcal{A}_\pm^{a'}} \leq   \|\varphi\|_{\mathcal{A}_\pm^{a}}$ (resp. $\limsup \|\varphi_n\|_{\mathcal{A}^{a'}} \leq   \|\varphi\|_{\mathcal{A}^{a}}$).  
	\end{enumerate}
\end{proposition}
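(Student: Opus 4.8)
The plan is to obtain $\varphi_n$ by regularizing $\varphi$ through averaging over the automorphism group of $\P^k$, following \cite{superpotentiels}. Fix a smooth probability density on $\Aut(\P^k)$ supported in a small neighbourhood of the identity, and let $(\rho_\varepsilon)_{\varepsilon>0}$ be the associated family of densities concentrating at the identity as $\varepsilon\to0$; for $g\in L^1(\P^k)$ put $g_\varepsilon:=\int\theta_*g\,d\rho_\varepsilon(\theta)$. Standard facts: $g_\varepsilon$ is smooth; if $g\in L^\infty\cap W^{1,2}$ then $g_\varepsilon\to g$ in $W^{1,2}$ and $\|g_\varepsilon\|_\infty\le\|g\|_\infty$; if $g\in\mathcal{C}_1$ then $g_\varepsilon\in\mathcal{C}_1$ is smooth, positive and closed (push-forward by an automorphism preserves the cohomology class, hence the mass, of a $(1,1)$-current). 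The one genuinely analytic input concerns quasi-potentials: if $u$ is $\omega$-psh and $v_\theta$ is a smooth potential of $\theta_*\omega-\omega$, normalized so that $0\ge v_\theta\ge-\eta(\varepsilon)$ on $\supp\rho_\varepsilon$ with $\eta(\varepsilon)\to0$, then the corrected regularization $\hat u_\varepsilon:=\int(\theta_*u+v_\theta)\,d\rho_\varepsilon(\theta)$ is again $\omega$-psh and satisfies $\hat u_\varepsilon\ge u-\eta'(\varepsilon)$ with $\eta'(\varepsilon)\to0$; moreover, along a suitable sequence $\varepsilon\downarrow0$, one can arrange $\hat u_\varepsilon+c(\varepsilon)\downarrow u$ with constants $c(\varepsilon)\downarrow0$.

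Granting this, I take $\varphi_n:=\varphi_{\varepsilon_n}$ for a sequence $\varepsilon_n\downarrow0$. For $(1)$: $\varphi_{\varepsilon_n}\to\varphi$ in $W^{1,2}$ by strong continuity of the $\Aut(\P^k)$-action on $W^{1,2}$ together with dominated convergence; writing $\varphi=u_1-u_2$ with $u_i$ qpsh, the $v_\theta$-terms cancel so that $\varphi_{\varepsilon_n}=(\hat u_1)_{\varepsilon_n}-(\hat u_2)_{\varepsilon_n}$, and each $(\hat u_i)_{\varepsilon_n}$ converges to $u_i$ pointwise off the pluripolar set $\{u_i=-\infty\}$; hence $\varphi_{\varepsilon_n}\to\tilde\varphi$ pointwise outside a pluripolar set, by \eqref{eq_mean1}. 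If $\varphi$ is itself qpsh one uses instead $\varphi_n:=\hat\varphi_{\varepsilon_n}$, which gives the claimed decreasing statement directly.

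Property $(2)$ is the heart of the argument. The bound on $dd^c$ is linear in $\varphi$, so from $\pm dd^c\varphi\le C_2^\pm R_2^\pm$ one gets
\[
\pm\, dd^c\varphi_\varepsilon \;=\; \pm\int\theta_*(dd^c\varphi)\,d\rho_\varepsilon \;\le\; \int\theta_*\bigl(C_2^\pm R_2^\pm\bigr)\,d\rho_\varepsilon \;=\; C_2^\pm\,(R_2^\pm)_\varepsilon .
\]
For the quadratic quantity, $\varphi\mapsto d\varphi\wedge d^c\varphi$ is (proportional to) the positive $(1,1)$-current $i\partial\varphi\wedge\bar\partial\varphi$, which depends quadratically on $\varphi$; hence Cauchy--Schwarz (equivalently Jensen for the probability measure $\rho_\varepsilon$) gives, from $d\varphi\wedge d^c\varphi\le C_1^\pm R_1^\pm$,
\[
d\varphi_\varepsilon\wedge d^c\varphi_\varepsilon \;\le\; \int d(\theta_*\varphi)\wedge d^c(\theta_*\varphi)\,d\rho_\varepsilon \;=\; \int\theta_*\bigl(d\varphi\wedge d^c\varphi\bigr)\,d\rho_\varepsilon \;\le\; C_1^\pm\,(R_1^\pm)_\varepsilon .
\]
It then remains to verify that $(R_i^\pm)_\varepsilon$ is more $(H,a')$-regular than $T^\pm$ once $\varepsilon$ is small: if $0\ge U\ge u^\pm_\infty-a$ is a quasi-potential of $R_i^\pm$, then $\widetilde U:=\int(\theta_*U+v_\theta)\,d\rho_\varepsilon$ is a quasi-potential of $(R_i^\pm)_\varepsilon$ with $\widetilde U\le0$, and using $\theta_*U\ge\theta_*u^\pm_\infty-a$ and the analytic input applied to the $\omega$-psh function $u^\pm_\infty$, one obtains $\widetilde U\ge u^\pm_\infty-a-\eta'(\varepsilon)\ge u^\pm_\infty-a'$ for $\varepsilon$ small. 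This is precisely where the hypothesis $a<a'$ is used: it provides the room to absorb the $o(1)$ loss of the regularization. Collecting $\|\varphi_{\varepsilon_n}\|_\infty\le\|\varphi\|_\infty$ and $|\varphi_{\varepsilon_n}|^\pm_i\le|\varphi|^\pm_i+o(1)$ (taking the infimal constants and their minimizing currents for $\varphi$), we get $\limsup_n\|\varphi_{\varepsilon_n}\|_{\mathcal{A}_\pm^{a'}}\le\|\varphi\|_{\mathcal{A}_\pm^{a}}$; since the averaging is neutral with respect to $T^+$ and $T^-$, the same sequence simultaneously handles $\mathcal{A}^a$, which gives the remaining case. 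A diagonal choice of $\varepsilon_n$ then accommodates the various $o(1)$ terms, and in the qpsh case the monotonicity requirement, at once.

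I expect the main obstacle to be exactly the analytic input flagged above: establishing, or extracting precisely from \cite{superpotentiels}, that the automorphism regularization does not depress an $\omega$-psh function below itself by more than $o(1)$ in the sup-norm, so that the quantified $(H,a)$-regularity of the currents $R_i^\pm$ survives under regularization up to an arbitrarily small loss. The remaining ingredients — smoothness of $g_\varepsilon$, the $W^{1,2}$ and quasi-everywhere convergence, linearity of the $dd^c$ estimate, and the Cauchy--Schwarz step for $d\varphi\wedge d^c\varphi$ — are routine.
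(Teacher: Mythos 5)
Your proposal is correct and follows essentially the same route as the paper: regularization by averaging over a family of automorphisms concentrating at the identity, linearity for the $dd^c$ bound, a Cauchy--Schwarz/Jensen step for $d\varphi\wedge d^c\varphi$, and control of the quantified $(H,a)$-regularity of the averaged currents up to an $o(1)$ loss absorbed by $a<a'$. The ``analytic input'' you flag is resolved in the paper exactly as you suggest: locally write the quasi-potential as (psh) minus (smooth), note that the radial averaging of the psh part dominates it pointwise (and decreases), and that the smooth part converges uniformly, so the sup-norm loss is $o(1)$.
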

\begin{proof}  We only treat the case $\varphi \in \mathcal{A}_+^a$. Let $U$ be a chart centered at $0$ which corresponds to the identity in $\mathrm{PSL}_{k+1}(\C)$, the group of automorphisms of $\P^k$.	
	Let $\begin{cases}
		\Phi: &U \to B(0,1) \subset \C^k\\
	          &g \mapsto g.0 	
	\end{cases}$ and define 
	\[\theta_{\varepsilon}(g)= \frac{1}{\varepsilon^{2k}} \alpha\left(\frac{\Phi(g)}{\varepsilon}\right)\]
	where $\alpha:B(0,1) \to \R^+$ is a smooth radial function with compact support and $\int_{B(0,1)} \alpha d\lambda = 1$ where $\lambda$ is the Lebesgue measure on  $B(0,1)$. Take $\nu$ such that $\Phi_*\nu= \lambda$. Observe that
	\[\int_{U} \theta_{\varepsilon}(g) d\nu = \int_{B(0,1)}\frac{1}{\varepsilon^{2k}} \alpha\left(\frac{z}{\varepsilon}\right)  d\lambda(z)=1.  \]	
	Let $\Psi  \in L^1(\P^k)$, we denote by 
	\[\Psi_n(x) := \int_{\mathrm{PSL}_{k+1}(\C)} \Psi(g.x)  \theta_{1/n}(g)d\nu(g).\]
It is classical that $\Psi_n$ is smooth and converges to $\Psi$ in $L^1$, and furthermore, if $\Psi \in W^{1,2}$, the convergence holds in $W^{1,2}$. Assume now that $\Psi$ is psh near $x$, then, if $x=0$, one immediately has that $(\Psi_n(0))$ 	decreases to $\Psi(0)$ ($\theta$ is radial); for $x$ arbitrary, let $g_x \in  \mathrm{PSL}_{k+1}(\C)$ such that $g_x(0)=x$. Then, $\widetilde{\Psi}:= g_x^* \Psi $ is psh in a neighborhood of $0$ since $g$ is holomorphic; in particular, $(\widetilde{\Psi}_n(0))_n= (\Psi_n(x))_n$ decreases to $\widetilde{\Psi}(0)=\Psi(x) $. As elements of $\mathcal{A}_+^a$ are the difference of two qpsh functions, this implies $\varphi_n(x) \to \varphi(x)$ for all $x$ outside a pluripolar set. For the second part of the first point of the Lemma: if $\varphi$ is qpsh, it is locally the difference of a psh function (hence the regularization decrease) and a smooth function $h$ (for which the convergence is uniform). Hence for $\varepsilon_n:= \|h-h_n\|_\infty$, $\varphi_n +\varepsilon_n$ decreases to $\varphi$ up to extracting a subsequence and replacing $\varepsilon_n$ with a slightly larger number.
	
	Now, if $R^+_1 \in \mathcal{C}_1$ is more $(H,a)$-regular than $T^+$  with 
	\[d \varphi \wedge d^c \varphi \leq |\varphi|^+_1 R_1^+\]
	then by Fubini and Lemma~\ref{CS} below:
	\begin{align*}d \varphi_n \wedge d^c \varphi_n &= \int_{\mathrm{PSL}_{k+1}(\C) \times \mathrm{PSL}_{k+1}(\C)}  d \varphi(gx) \wedge d^c \varphi(g'x) \theta_{1/n}(g)\theta_{1/n}(g')d\nu \otimes\nu(g,g') \\
		&\leq 1/2\int_{\mathrm{PSL}_{k+1}(\C) \times \mathrm{PSL}_{k+1}(\C)}  \Big(d \varphi(gx) \wedge d^c \varphi(gx) \\
		&\quad \quad \quad \quad +d \varphi(g'x) \wedge d^c \varphi(g'x) \Big)\theta_{1/n}(g)\theta_{1/n}(g') d\nu \otimes\nu(g,g') \\
		&\leq \int_{\mathrm{PSL}_{k+1}(\C)}  g^*(d \varphi(x) \wedge d^c \varphi(x))\theta_{1/n}(g)d\nu(g) \\
		&\leq |\varphi|^+_1  \int_{\mathrm{PSL}_{k+1}(\C)}  g^*R_1^+\theta_{1/n}(g)d\nu(g) \\
		&= |\varphi|^+_1  dd^c\int_{\mathrm{PSL}_{k+1}(\C)}  U_{R_1^+}( g(x))\theta_{1/n}(g)d\nu(g)+ |\varphi|^+_1  \int_{\mathrm{PSL}_{k+1}(\C)}  g^*\omega \theta_{1/n}(g)d\nu(g)
	\end{align*}	
	where $U_{R_1^+}$ is a quasi-potential of $R_1^+$ satisfying  $0\geq U_{R_1^+} \geq u^+_\infty-a$. 
	It is classical that $\int_{\mathrm{PSL}_{k+1}(\C)}  g^*\omega \theta_{1/n}(g)d\nu(g)$ converges to $\omega$ in $C^2$-norm, so we can write it as $\omega + dd^cu_n$ where $\|u_n\|_\infty <\varepsilon$ for $n$ large enough. In a chart of a finite atlas of $\P^k$, we can write $U_{R_1^+}= u_{R_1^+}- \psi$ where $u_{R_1^+}$ is psh and $\psi$ is smooth: $\psi$ is a potential of $\omega$ in the chart and $ u_{R_1^+}$ is a potential of $R_1^+$.  With the above notations, $\int_{\mathrm{PSL}_{k+1}(\C)}  U_{R_1^+}( g(x))\theta_{1/n}(g)d\nu(g)$ is non positive (as an average of non positive functions) and it can be written as $(u_{R_1^+})_n- \psi_n$. We saw that $(u_{R_1^+})_n$ decreases to $u_{R_1^+}$ and $\psi_n$ converges uniformly to $\psi$. As $U_{R_1^+} \geq u^+_\infty-a$, we deduce that 
	\begin{align*} 
	0\geq (U_{R_1^+})_n + u_n- \varepsilon &=  (u_{R_1^+})_n- \psi_n +u_n -\varepsilon\\
	                                       &\geq U_{R_1^+} -  \varepsilon'-2\varepsilon \geq u^+_\infty-a'
	\end{align*} 
	by taking $\varepsilon'$ and $\varepsilon$ small enough. This shows that $\varphi_n $ satisfies condition a) in the definition of $\mathcal{A}_+^{a'}$ with $|\varphi_n|_1^+ \leq |\varphi|_1^+$. For condition c), observe that if $R^+_2 \in \mathcal{C}_1$ is more $(H,a)$-regular than $T^+$ with 
	\[0\leq  \pm dd^c \varphi + |\varphi|_2^+ R_2^+\]
	then 
	\[0\leq  \pm dd^c \int_{\mathrm{PSL}_{k+1}(\C)} \varphi(gx) \theta_{1/n}(g)d\nu(g) +  |\varphi|_2^+   \int_{\mathrm{PSL}_{k+1}(\C)} g^* R_2^+ \theta_{1/n}(g)d\nu(g)\]
	so 
	\[0\leq  \pm dd^c \varphi_n  +  |\varphi|_2^+   \int_{\mathrm{PSL}_{k+1}(\C)} g^* R_2^+ \theta_{1/n}(g)d\nu(g)\]
	and we just proved that $\int_{\mathrm{PSL}_{k+1}(\C)} g^* R_2^+ \theta_{1/n}(g)d\nu(g) \in \mathcal{C}_1$ is more $(H,a')$-regular than $T^+$ so again, $\varphi_n$ satisfies condition c) with  $|\varphi_n|_2^+ \leq |\varphi|_2^+$. The proposition follows. 
\end{proof}
\begin{remark}\label{rm_regularization}
	\normalfont It is also possible to approximate $\varphi\in \mathcal{A}_+^a$ by smooth functions in $\mathcal{A}_+^a$ by dividing $\varphi$ by $\rho>1$ since if 
	\[ d\left(\frac{\varphi}{\rho} \right)\wedge d^c \left(\frac{\varphi}{\rho} \right)  \leq \frac{C_1}{\rho^2}(\omega + dd^c \psi) \leq C_1(\omega + dd^c \frac{\psi}{\rho^2}),\] 
with similar arguments for condition c) and d). Observe that then, taking a suitable $\rho_n\to 1$ and defining $\tilde{\varphi}_n = \varphi_n /\rho_n$, we can have that $\lim \|\tilde{\varphi}_n\|_{\mathcal{A}^a_\pm} =   \|\varphi\|_{\mathcal{A}^a_\pm}$ (Proposition~\ref{prop_regularization} ensures the inequality $\leq$ and the other one is immediate by weak convergence) and  $\rho_n \tilde{\varphi}_n+\varepsilon_n$ decreases to $\varphi$ .
\end{remark}
 
From now on, we fix $a$ and we simply denote $\mathcal{A}_\pm$ and $\mathcal{A}$ instead of $\mathcal{A}_\pm ^a$ and $\mathcal{A}^a$ if no confusion can arise.

\begin{theorem} We have that $(\mathcal{A}_\pm, \|.\|_{\mathcal{A}_\pm})$ is a Banach subalgebra of $L^\infty$. Furthermore,  for $\varphi_1,\varphi_2 \in \mathcal{A}_\pm$,  $\|\varphi_1\varphi_2\|_{\mathcal{A}_\pm} \leq 20 \|\varphi_1\|_{\mathcal{A}_\pm} \|\varphi_2\|_{\mathcal{A}_\pm}$. 
	
	Consequently, $(\mathcal{A}, \|.\|_{\mathcal{A}})$ is a Banach subalgebra of $L^\infty$ and for $\varphi_1,\varphi_2 \in \mathcal{A}$, we have $\|\varphi_1\varphi_2\|_{\mathcal{A}} \leq 40 \|\varphi_1\|_{\mathcal{A}} \|\varphi_2\|_{\mathcal{A}}$. 
	\end{theorem}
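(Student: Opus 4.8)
The plan is to prove everything for $\mathcal{A}_+$ (the case of $\mathcal{A}_-$ being verbatim the same with $T^-$, $u^-_\infty$ replacing $T^+$, $u^+_\infty$) and then get the statement for $\mathcal{A}=\mathcal{A}_+\cap\mathcal{A}_-$ by a soft intersection argument. Two elementary mechanisms are used repeatedly. First, the set $\mathcal{R}_+$ of currents of $\mathcal{C}_1$ that are more $(H,a)$-regular than $T^+$ is convex: if $R,R'\in\mathcal{R}_+$ and $\lambda\in[0,1]$ then $\lambda R+(1-\lambda)R'$ has quasi-potential $\lambda U_R+(1-\lambda)U_{R'}$, which still lies between $0$ and $u^+_\infty-a$; hence any finite sum $\sum_i c_i R_i$ with $c_i\ge 0$, $R_i\in\mathcal{R}_+$, equals $\big(\sum_i c_i\big)R$ for a single $R\in\mathcal{R}_+$, which is what lets us always land on \emph{one} controlling current in conditions a) and c). Second, the weighted Cauchy--Schwarz inequality — which for $t>0$ reads $\pm(du\wedge d^c v+dv\wedge d^c u)\le t\,du\wedge d^c u+t^{-1}dv\wedge d^c v$, obtained by expanding $d(\sqrt t\,u\pm t^{-1/2}v)\wedge d^c(\sqrt t\,u\pm t^{-1/2}v)\ge 0$ (the weighted form of Lemma~\ref{CS}) — together with the trivial observation that $gA\le MR$ whenever $|g|\le M$, $R\ge 0$ and $-R\le A\le R$ for a real $(1,1)$-current $A$. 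By Remark~\ref{rm_outside} all current inequalities below need only be checked off a strict algebraic set, which removes the only real obstruction to applying the Leibniz rules $d(\varphi\psi)=\varphi\,d\psi+\psi\,d\varphi$ and $dd^c(\varphi\psi)=\varphi\,dd^c\psi+\psi\,dd^c\varphi+d\varphi\wedge d^c\psi+d\psi\wedge d^c\varphi$ for $\varphi,\psi\in L^\infty\cap W^{1,2}\cap\mathrm{DSH}$ (legitimate by Bedford--Taylor, since each of $dd^c\varphi$, $dd^c\psi$ charges no pluripolar set, having a bounded potential); alternatively one first proves the estimates for smooth functions via Proposition~\ref{prop_regularization} and passes to the limit.

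First I would check that $(\mathcal{A}_+,\|\cdot\|_{\mathcal{A}_+})$ is a normed vector space containing $1$. Subadditivity of $\|\cdot\|_\infty$ and of $|\cdot|_2^+$ is immediate from the convexity mechanism; for $\sqrt{|\cdot|_1^+}$ one expands $d(\varphi+\psi)\wedge d^c(\varphi+\psi)$, estimates the cross term by the weighted Cauchy--Schwarz with $t=\sqrt{|\psi|_1^+/|\varphi|_1^+}$, and collapses to one current, obtaining $|\varphi+\psi|_1^+\le(\sqrt{|\varphi|_1^+}+\sqrt{|\psi|_1^+})^2$; homogeneity and non-degeneracy are clear, and $\varphi+\psi\in L^\infty\cap W^{1,2}\cap\mathrm{DSH}$ is routine, as is $\|\varphi\|_\infty\le\|\varphi\|_{\mathcal{A}_+}$, so $\mathcal{A}_+\hookrightarrow L^\infty$ continuously. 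For completeness, note that $\int d\varphi\wedge d^c\varphi\wedge\omega^{k-1}\le|\varphi|_1^+$, so $\|\cdot\|_{\mathcal{A}_+}$ dominates a constant times the $W^{1,2}$-norm; thus a Cauchy sequence $(\varphi_n)$ converges to some $\varphi$ in $L^\infty$ and in $W^{1,2}$, whence $d\varphi_n\wedge d^c\varphi_n\to d\varphi\wedge d^c\varphi$ in $L^1$ and $dd^c\varphi_n\to dd^c\varphi$ weakly. Fixing $n$ and letting $m\to\infty$ in the inequalities defining $\mathcal{A}_+$ for $\varphi_n-\varphi_m$, using that $\mathcal{R}_+$ is weakly compact and that a weak limit of positive currents is positive, one bounds $|\varphi_n-\varphi|_1^+$ and $|\varphi_n-\varphi|_2^+$ by the tails of the Cauchy sequence, so $\varphi_n\to\varphi$ in $\mathcal{A}_+$ and $\varphi\in\mathcal{A}_+$. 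I expect the weak compactness of $\mathcal{R}_+$ to be the main obstacle: one must ensure that the two-sided potential bound $0\ge U\ge u^+_\infty-a$ survives an $L^1$-limit once $U$ is replaced by its upper-semicontinuous ($\omega$-psh) representative — this is the place where real work (rather than bookkeeping) is needed.

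Next I would prove $\mathcal{A}_+$ is closed under products with $\|\varphi_1\varphi_2\|_{\mathcal{A}_+}\le 20\,\|\varphi_1\|_{\mathcal{A}_+}\|\varphi_2\|_{\mathcal{A}_+}$. Writing $A_i=\|\varphi_i\|_\infty$, $B_i=\sqrt{|\varphi_i|_1^+}$, $D_i=|\varphi_i|_2^+$, expand
\[ d(\varphi_1\varphi_2)\wedge d^c(\varphi_1\varphi_2)=\varphi_2^2\,d\varphi_1\wedge d^c\varphi_1+\varphi_1^2\,d\varphi_2\wedge d^c\varphi_2+\varphi_1\varphi_2\big(d\varphi_1\wedge d^c\varphi_2+d\varphi_2\wedge d^c\varphi_1\big); \]
bounding the first two terms directly and the last (using $|\varphi_1\varphi_2|\le A_1A_2$ and the weighted Cauchy--Schwarz with $t=A_2/A_1$), then collapsing, yields $|\varphi_1\varphi_2|_1^+\le 2(A_2^2B_1^2+A_1^2B_2^2)$, i.e. $\sqrt{|\varphi_1\varphi_2|_1^+}\le\sqrt 2\,(A_2B_1+A_1B_2)\le 2\sqrt 2\,\|\varphi_1\|_{\mathcal{A}_+}\|\varphi_2\|_{\mathcal{A}_+}$. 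Similarly, expanding $dd^c(\varphi_1\varphi_2)$ by Leibniz, bounding $\pm\varphi_i\,dd^c\varphi_j$ and applying the weighted Cauchy--Schwarz with $t=B_2/B_1$ to the bracket $d\varphi_1\wedge d^c\varphi_2+d\varphi_2\wedge d^c\varphi_1$, then collapsing the four resulting $(H,a)$-regular currents, gives $|\varphi_1\varphi_2|_2^+\le A_1D_2+A_2D_1+2B_1B_2\le 4\,\|\varphi_1\|_{\mathcal{A}_+}\|\varphi_2\|_{\mathcal{A}_+}$; this also displays $\varphi_1\varphi_2$ as a difference of positive closed currents (hence DSH), and $\varphi_1\varphi_2\in W^{1,2}$ by Leibniz. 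With $\|\varphi_1\varphi_2\|_\infty\le A_1A_2\le\|\varphi_1\|_{\mathcal{A}_+}\|\varphi_2\|_{\mathcal{A}_+}$ this gives $\|\varphi_1\varphi_2\|_{\mathcal{A}_+}\le(5+2\sqrt 2)\,\|\varphi_1\|_{\mathcal{A}_+}\|\varphi_2\|_{\mathcal{A}_+}$, comfortably below $20$; the degenerate cases $A_i=0$ or $B_i=0$ are trivial or follow by continuity.

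Finally, since $\|\varphi\|_{\mathcal{A}}=\|\varphi\|_{\mathcal{A}_+}+\|\varphi\|_{\mathcal{A}_-}-\|\varphi\|_\infty$, one has $\max(\|\varphi\|_{\mathcal{A}_+},\|\varphi\|_{\mathcal{A}_-})\le\|\varphi\|_{\mathcal{A}}\le\|\varphi\|_{\mathcal{A}_+}+\|\varphi\|_{\mathcal{A}_-}$, so $\mathcal{A}=\mathcal{A}_+\cap\mathcal{A}_-$ with the norm $\|\cdot\|_{\mathcal{A}}$ is the intersection of two Banach spaces continuously embedded in $L^\infty$, hence Banach (a Cauchy sequence converges in each of $\mathcal{A}_+$ and $\mathcal{A}_-$, and to the same $L^\infty$-limit); and it is a subalgebra with $\|\varphi_1\varphi_2\|_{\mathcal{A}}\le\|\varphi_1\varphi_2\|_{\mathcal{A}_+}+\|\varphi_1\varphi_2\|_{\mathcal{A}_-}\le 20\big(\|\varphi_1\|_{\mathcal{A}_+}\|\varphi_2\|_{\mathcal{A}_+}+\|\varphi_1\|_{\mathcal{A}_-}\|\varphi_2\|_{\mathcal{A}_-}\big)\le 40\,\|\varphi_1\|_{\mathcal{A}}\|\varphi_2\|_{\mathcal{A}}$, using $\|\varphi\|_{\mathcal{A}_\pm}\le\|\varphi\|_{\mathcal{A}}$.
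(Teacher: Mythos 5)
Your proposal is correct and follows essentially the same route as the paper: the same weighted Cauchy--Schwarz lemma, the same convex combination of $(H,a)$-regular currents to collapse everything onto a single controlling current, the same Leibniz expansions for the product (your decomposition $A=\tfrac{R+A}{2}-\tfrac{R-A}{2}$ of the sign-indefinite terms is a slightly cleaner variant of the paper's shift by $\|\varphi_1\varphi_2\|_\infty$ and gives marginally better constants), and the same reduction of the statement for $\mathcal{A}$ to the two half-statements. The only step the paper makes explicit that you elide is the verification, via the mean-value characterization \eqref{eq_mean1}, that the pointwise product of the DSH representatives is itself the DSH representative of the product (this matters later because $\mu$ may charge Lebesgue-null sets); it is a short check, not a real gap.
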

In the following, we will write $R\leq S$ for two $(1,1)$ currents in the case where only $S$ is positive; this has to be understood as $0\leq S-R $ in the sense of currents.
\begin{lemma}\label{CS} 
		Let $\varphi_1,\varphi_2\in W^{1,2}$. Then, for $c>0$
	\begin{align*}
		c d \varphi_1 \wedge d^c \varphi_1 +c^{-1} d \varphi_2 \wedge d^c \varphi_2 \geq d \varphi_1 \wedge d^c \varphi_2+d \varphi_2 \wedge d^c \varphi_1\\
			c d \varphi_1 \wedge d^c \varphi_1 +c^{-1} d \varphi_2 \wedge d^c \varphi_2 \geq -d \varphi_1 \wedge d^c \varphi_2-d \varphi_2 \wedge d^c \varphi_1.
	\end{align*} 
\end{lemma}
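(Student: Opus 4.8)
The plan is to derive both inequalities from the single elementary fact that for every real-valued $\psi\in W^{1,2}(\P^k)$ the $(1,1)$-current $d\psi\wedge d^c\psi$ is positive. Indeed, $d\psi\wedge d^c\psi$ equals a positive multiple of $i\,\partial\psi\wedge\bar\partial\psi$; since $\psi\in W^{1,2}$, the coefficients of this form are products of the $L^2$ weak first partial derivatives of $\psi$, hence lie in $L^1$, and at almost every point they assemble into the positive semidefinite Hermitian form $\xi\mapsto |\langle\partial\psi,\xi\rangle|^2$. This gives $d\psi\wedge d^c\psi\geq 0$ in the sense of currents. (If one prefers to avoid arguing in this low regularity, one may instead regularize $\psi$ by smooth functions converging to it in $W^{1,2}$, invoke the pointwise linear-algebra statement for smooth functions, and pass to the limit, using that $\psi_n\to\psi$ in $W^{1,2}$ forces $\partial\psi_n\wedge\bar\partial\psi_n\to\partial\psi\wedge\bar\partial\psi$ with $L^1$-convergence of coefficients.)

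Granting this, the proof is just polarization. For a real linear combination $\psi=\lambda_1\varphi_1+\lambda_2\varphi_2$ one has, coefficientwise and hence as currents,
\[
d\psi\wedge d^c\psi=\lambda_1^2\, d\varphi_1\wedge d^c\varphi_1+\lambda_2^2\, d\varphi_2\wedge d^c\varphi_2+\lambda_1\lambda_2\big(d\varphi_1\wedge d^c\varphi_2+d\varphi_2\wedge d^c\varphi_1\big),
\]
the bilinear expansion being legitimate because each of $d\varphi_i\wedge d^c\varphi_j$ is a well-defined $(1,1)$-form with $L^1$ coefficients (products of $L^2$ functions). Taking $\lambda_1=\sqrt c$ and $\lambda_2=-c^{-1/2}$ gives $\lambda_1^2=c$, $\lambda_2^2=c^{-1}$, $\lambda_1\lambda_2=-1$, so
\[
0\leq d\psi\wedge d^c\psi=c\, d\varphi_1\wedge d^c\varphi_1+c^{-1} d\varphi_2\wedge d^c\varphi_2-\big(d\varphi_1\wedge d^c\varphi_2+d\varphi_2\wedge d^c\varphi_1\big),
\]
which is the first claimed inequality. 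Taking instead $\lambda_2=+c^{-1/2}$ flips the sign of the cross term and yields the second one.

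There is essentially no real obstacle: the only point needing a line of care is that $d\varphi_1\wedge d^c\varphi_2$ is a legitimate $(1,1)$-form with $L^1$ coefficients for $\varphi_i\in W^{1,2}$ and that positivity of $d\psi\wedge d^c\psi$ persists at this regularity — both standard — and, as noted, both can be sidestepped by running the argument for smooth approximants and passing to the limit only at the end.
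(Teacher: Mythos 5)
Your proof is correct and is essentially identical to the paper's: both expand $d(\sqrt{c}\,\varphi_1 \mp c^{-1/2}\varphi_2)\wedge d^c(\sqrt{c}\,\varphi_1 \mp c^{-1/2}\varphi_2)\geq 0$ and read off the two inequalities. The extra remarks on $L^1$ coefficients and regularization are sound but not needed beyond what the paper takes as standard.
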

\begin{proof}Let $c'= \sqrt{c}$. Then
\begin{align*}
d  (c'\varphi_1 - c'^{-1} \varphi_2)\wedge d^c(c'\varphi_1 - c'^{-1} \varphi_2) \geq 0 \\ 
 d  (c'\varphi_1 + c'^{-1} \varphi_2)\wedge d^c(c'\varphi_1 + c'^{-1} \varphi_2) \geq 0
\end{align*} Developing  gives the two inequalities of the lemma. 
\end{proof}

\begin{proof}[Proof of the theorem] 
	We only consider the case  $(\mathcal{A}_+, \|.\|_{\mathcal{A}_+})$, $(\mathcal{A}_-, \|.\|_{\mathcal{A}_-})$ is similar and the result for $(\mathcal{A}, \|.\|_{\mathcal{A}})$ immediately follows.

First, $L^\infty(\P^k) \cap W^{1,2}(\P^k) \cap \mathrm{DSH}(\P^k)$ is a vector space on which $\|.\|_\infty$ is a norm; stability under multiplication by a scalar and $\|\lambda \varphi\|_{\mathcal{A}_+}=|\lambda|\|\varphi\|_{\mathcal{A}_+}$ are straightforward.
\subsection*{$\sqrt{|\varphi|^+_1}$ satisfies the triangular inequality.}
We follow ideas of \cite{Vigny_JFA}. Take $\varphi_1, \varphi_2 \in \mathcal{A}_+$, we can assume that 
$|\varphi_1|^+_1 \neq 0$ and $|\varphi_2|^+_1 \neq 0$ (if not, one of these functions is constant and the result is clear). In particular,
\[ d\varphi_i \wedge d^c \varphi_i \leq |\varphi_i|_1^+ R_i\]
for $i=1,2$ where $R_i \in \mathcal{C}_1$ is more $(H,a)$-regular than $T^+$. By applying the lemma~\ref{CS} to  $c>0$ with $c=\sqrt{|\varphi_2|_1^+/|\varphi_1|_1^+}$, we deduce
\begin{align*} d(\varphi_1+\varphi_2) \wedge d^c (\varphi_1+\varphi_2)&=  d\varphi_1 \wedge d^c \varphi_1+ d\varphi_2 \wedge d^c \varphi_2 + d \varphi_1 \wedge d^c \varphi_2+d \varphi_2 \wedge d^c \varphi_1\\
	& \leq |\varphi_1|_1^+ R_1+ |\varphi_2|_1^+ R_2 + 	c d \varphi_1 \wedge d^c \varphi_1 +c^{-1} d \varphi_2 \wedge d^c \varphi_2\\
	&\leq  |\varphi_1|_1^+(1+c) R_1+ |\varphi_2|_1^+(1+c^{-1}) R_2\\
	&\leq  (|\varphi_1|_1^++\sqrt{|\varphi_1|_1^+|\varphi_2|_1^+})R_1+ (|\varphi_2|_1^+ +\sqrt{|\varphi_1|_1^+|\varphi_2|_1^+})R_2.
\end{align*}
Let $R':= (|\varphi_1|_1^++\sqrt{|\varphi_1|_1^+|\varphi_2|_1^+})R_1+ (|\varphi_2|_1^+ +\sqrt{|\varphi_1|_1^+|\varphi_2|_1^+})R_2$. Then $R'$ has mass 
$|\varphi_1|_1^++2\sqrt{|\varphi_1|_1^+|\varphi_2|_1^+}+ |\varphi_2|_1^+ = \left(\sqrt{|\varphi_1|_1^+}+\sqrt{|\varphi_2|_1^+}\right)^2$. Furthermore, if $0\geq U_{R_i} \geq u^+_\infty -a$ are  quasi-potentials of $R_i$ for $i=1,2$, then 
\begin{align*} 
R' &= (|\varphi_1|_1^++\sqrt{|\varphi_1|_1^+|\varphi_2|_1^+})R_1+ (|\varphi_2|_1^+ +\sqrt{|\varphi_1|_1^+|\varphi_2|_1^+})R_2\\
& =\|R'\| \left(\omega + dd^c \left(\frac{|\varphi_1|_1^++\sqrt{|\varphi_1|_1^+|\varphi_2|_1^+}}{\|R'\|}U_{R_1} + \frac{|\varphi_2|_1^++\sqrt{|\varphi_1|_1^+|\varphi_2|_1^+}}{\|R'\|} U_{R_2}\right) \right).  \end{align*}
In particular, the current $R:= \omega + dd^c \left(\frac{|\varphi_1|_1^++\sqrt{|\varphi_1|_1^+|\varphi_2|_1^+}}{\|R'\|}U_{R_1} + \frac{|\varphi_2|_1^++\sqrt{|\varphi_1|_1^+|\varphi_2|_1^+}}{\|R'\|} U_{R_2}\right)$ is in $\mathcal{C}_1$ admits the quasi-potential $\frac{|\varphi_1|_1^++\sqrt{|\varphi_1|_1^+|\varphi_2|_1^+}}{\|R'\|}U_{R_1} + \frac{|\varphi_2|_1^++\sqrt{|\varphi_1|_1^+|\varphi_2|_1^+}}{\|R'\|} U_{R_2}$ which satisfies:
\begin{align*}
	0\geq  \frac{|\varphi_1|_1^++\sqrt{|\varphi_1|_1^+|\varphi_2|_1^+}}{\|R'\|}U_{R_1} + \frac{|\varphi_2|_1^++\sqrt{|\varphi_1|_1^+|\varphi_2|_1^+}}{\|R'\|} U_{R_2}\geq u_\infty^+-a.
	\end{align*}
In particular, $d(\varphi_1+\varphi_2) \wedge d^c (\varphi_1+\varphi_2)$ is indeed bounded by $\left(\sqrt{|\varphi_1|_1^+}+\sqrt{|\varphi_1|_1^+}\right)^2R$ where $R \in \mathcal{C}_1$ is more $(H,a)$-regular than $ T^+$. Thus $\varphi_1+\varphi_2$ satisfies condition a) and $|\varphi_1+\varphi_2|^+_1 \leq \left(\sqrt{|\varphi_1|_1^+}+\sqrt{|\varphi_2|_1^+}\right)^2$, which gives the triangular inequality for $\sqrt{|\cdot|_1^+}$ . 
\subsection*{$|\varphi|^+_2$ satisfies the triangular inequality.} Again, take $\varphi_i \in \mathcal{A}_+$ for $i= 1,2$. Let $R_i\in  \mathcal{C}_1$ be more $(H,a)$-regular than $ T^+$  with $ \pm dd^c \varphi_i + |\varphi_i|_2^+ R_i \geq 0$ for $i= 1,2$. Write $R_i= \omega + dd^c U_{R_i}$ with $0\geq U_{R_i} \geq u^+_\infty-a$. One has 
\[ ( |\varphi_1|_2^+ R_1 + |\varphi_2|_2^+ R_2) \pm (dd^c \varphi_1+dd^c \varphi_2) \geq 0 \]
so $R:=  \frac{|\varphi_1|_2^+ R_1 + |\varphi_2|_2^+ R_2}{|\varphi_1|_2^++|\varphi_2|_2^+}\in \mathcal{C}_1$ and one checks as above that $R$ is more $(H,a)$-regular than $T^+$. This implies that $\varphi_1+\varphi_2$ satisfies condition c) and $|\varphi_1+\varphi_2|^+_2\leq |\varphi_1|_2^++|\varphi_2|_2^+$ so $|.|^+_2$ satisfies the triangular inequality.
\subsection*{$\mathcal{A}_+$ is an algebra} Only  the stability under product remains. Let $\varphi_i\in \mathcal{A}_+$, $i=1,2$. Clearly, $\varphi_1\varphi_2 \in L^\infty$. We can assume that $\varphi_1 \neq 0$. We compute
\begin{align*}
	d(\varphi_1\varphi_2)\wedge d^c (\varphi_1\varphi_2)= |\varphi_2|^2 	d\varphi_1\wedge d^c \varphi_1+ |\varphi_1|^2 	d\varphi_2\wedge d^c \varphi_2 +\varphi_1\varphi_2( d\varphi_1\wedge d^c \varphi_2+d\varphi_2\wedge d^c \varphi_1).
\end{align*}
Observe that every term is well defined as the $(1,1)$ forms appearing are $L^1$ and the multiplying functions are bounded.  Choosing $c>0$ and applying Lemma~\ref{CS} give
\begin{align*}
\varphi_1\varphi_2( d\varphi_1\wedge d^c \varphi_2+d\varphi_2\wedge d^c \varphi_1)= &(\varphi_1\varphi_2+ \|\varphi_1 \varphi_2 \|_\infty) ( d\varphi_1\wedge d^c \varphi_2+d\varphi_2\wedge d^c \varphi_1)\\
& -\|\varphi_1 \varphi_2 \|_\infty ( d\varphi_1\wedge d^c \varphi_2+d\varphi_2\wedge d^c \varphi_1) \\
  &\leq (\varphi_1\varphi_2+ \|\varphi_1 \varphi_2 \|_\infty) ( c d\varphi_1\wedge d^c \varphi_1+c^{-1} d\varphi_2\wedge d^c \varphi_2)\\
  & +\|\varphi_1 \varphi_2 \|_\infty ( cd\varphi_1\wedge d^c \varphi_1+c^{-1}d\varphi_2\wedge d^c \varphi_2) \\
  &\leq  3\|\varphi_1\|_\infty \|\varphi_2\|_\infty \left(c |\varphi_1|_1^+ R^+_1 +c^{-1}|\varphi_2|_1^+ R^+_2 \right) 
\end{align*}
 where $R^+_i\in \mathcal{C}_1$ are currents more $(H,a)$-regular than $ T^+$ with $d\varphi_i\wedge d^c \varphi_i\leq |\varphi_i|_1^+ R^+_i$ for $i=1,2$. Injecting this inequality in $d(\varphi_1\varphi_2)\wedge d^c (\varphi_1\varphi_2)$ gives
 \begin{align*}
 	d(\varphi_1\varphi_2)\wedge d^c (\varphi_1\varphi_2)\leq & (\|\varphi_2\|_\infty^2 + 3c\|\varphi_1\|_\infty \|\varphi_2\|_\infty) |\varphi_1|_1^+ R^+_1+\\
 	&\quad   (\|\varphi_1\|_\infty^2 + 3c^{-1}\|\varphi_1\|_\infty \|\varphi_2\|_\infty) |\varphi_2|_1^+ R^+_2.
 \end{align*}
Taking $c=\| \varphi_2\|_\infty / \| \varphi_1\|_\infty$ gives
 \begin{align*}
	d(\varphi_1\varphi_2)\wedge d^c (\varphi_1\varphi_2)\leq & 4\|\varphi_2\|_\infty^2|\varphi_1|_1^+ R^+_1+4\|\varphi_1\|_\infty^2 |\varphi_2|_1^+ R^+_2
\end{align*}
hence, we deduce as above that $\varphi_1\varphi_2$ satisfies the condition a) of $\mathcal{A}_+$ (and is thus in $W^{1,2}$ by \cite[Proposition 3.1]{DinhSibonyW}) with $|\varphi_1\varphi_2|_1^+ \leq 8 \|\varphi_1\|_{\mathcal{A}_+}^2 \|\varphi_2\|_{\mathcal{A}_+}^2$.

We now check condition c) and the fact that $\varphi_1 \varphi_2$ is indeed DSH since it is the DSH representative of the product. Let $S_i^+ \in \mathcal{C}_1$, more $(H,a)$-regular than $T^+$ such that $\pm dd^c \varphi_i \leq  |\varphi_i|_2^+ S_i^+$  for $i=1,2$.
\begin{align*}
	dd^c(\varphi_1\varphi_2)= \varphi_2 dd^c \varphi_1 + \varphi_1 dd^c\varphi_2+ 	d\varphi_1\wedge d^c \varphi_2+d\varphi_2\wedge d^c \varphi_1.
\end{align*}
As before, we can assume that $|\varphi_1|_2^+ \neq 0$ (if not, $\varphi_1$ is constant and the result is straightforward). Take $c^2:= |\varphi_2|_2^+/|\varphi_1|_2^+$ so Lemma~\ref{CS} gives
\begin{align*}
\pm (d\varphi_1\wedge d^c \varphi_2+d\varphi_2\wedge d^c \varphi_1) &\leq c d\varphi_1\wedge d^c \varphi_1+ c^{-1} d\varphi_2\wedge d^c \varphi_2 \leq c |\varphi_1|_1^+R^+_1 + c^{-1}|\varphi_2|_1^+R^+_2 \\
& \leq \sqrt{|\varphi_1|_1^+|\varphi_2|_1^+ }( R^+_1 +R^+_2).
\end{align*}
On the other hand,
\[\pm \varphi_2 dd^c \varphi_1 =\pm\left( (\varphi_2 + \|\varphi_2\|_\infty ) dd^c \varphi_1 - \|\varphi_2\|_\infty dd^c \varphi_1 \right)   
\leq 3 \|\varphi_2\|_\infty |\varphi_1|_2^+ S_1^+ \]
and similarly $\pm \varphi_1 dd^c \varphi_2 
\leq 3 \|\varphi_1\|_\infty |\varphi_2|_2^+ S_2^+$. Combining this and taking minimal currents for the properties a) and c) imply 
\begin{align*}
	\pm dd^c(\varphi_1\varphi_2)\leq 3 \|\varphi_2\|_\infty \|\varphi_1\|_2^+ S_1^+ + 3 \|\varphi_1\|_\infty \|\varphi_2\|_2^+ S_2^++  \sqrt{|\varphi_1|_1^+|\varphi_2|_1^+ }( R^+_1 +R^+_2)
\end{align*}
which implies that $\varphi_1\varphi_2$ satisfies condition c) with $| \varphi_1\varphi_2 |_2^+ \leq 8 \|\varphi_1\|_{\mathcal{A}_+} \|\varphi_2\|_{\mathcal{A}_+}$. In particular, $\varphi_1\varphi_2 $ coincides almost everywhere with a DSH function so it remains to check that $\varphi_1\varphi_2$ is DSH. For that, let $E\subset \P^k$ be a pluripolar set such that 
\begin{equation}
	\label{eq_conv_pp}
	\forall x \neq E, \forall i \in \{1,2\}, \  \lim_{r\to 0} \frac{1}{|B(x,r)|}\int_{B(x,r)} |\varphi_i(y)-\varphi_i(x)| d\lambda(y)=0, \end{equation}  
where again we work locally. Then, for such $x\notin E$ and $r >0$, 
\begin{align*}
	\int_{B(x,r)} |(\varphi_1\varphi_2)(y)-(\varphi_1\varphi_2)(x)| d\lambda(y)\leq & 
\int_{B(x,r)} |\varphi_1(y)\varphi_2(y)-\varphi_1(y)\varphi_2(x)| d\lambda(y)\\
	 &+ \int_{B(x,r)} |\varphi_1(y)\varphi_2(x)-\varphi_1(x)\varphi_2(x)| d\lambda(y) \\
	 \leq & \|\varphi_1\|_\infty  \int_{B(x,r)} |\varphi_2(y)-\varphi_2(x)| d\lambda(y)\\
	 &+ |\varphi_2(x)|\int_{B(x,r)} |\varphi_1(y)-\varphi_1(x)| d\lambda(y). 
\end{align*}
Dividing by $|B(x,r)|$, using $\|\varphi_1\|_\infty<\infty$, $|\varphi_2(x)| <\infty$ and  \eqref{eq_conv_pp} for $i=1,2$, and letting $r\to 0$ imply that for all $x \notin E$, $(\varphi_1 \varphi_2)(x)$ is a DSH representative of $\varphi_1 \varphi_2$ since the right-hand side goes to 0.

Combining all the above inequalities gives that $\mathcal{A}$ is indeed an algebra and we have the bound
\[ \|\varphi_1 \varphi_2\|_{\mathcal{A}_+}\leq 20  \|\varphi_1 \|_{\mathcal{A}_+} \| \varphi_2\|_{\mathcal{A}_+}.\]

\subsection*{$\mathcal{A}_+$ is complete.} The proof is similar to \cite[Proposition 1]{Vigny_JFA}; the key point being that the limit of a converging sequence of currents which are more $(H,a)$-regular that $T^+$ is obviously again more $(H,a)$-regular that $T^+$. 
\end{proof}

\begin{proposition}\label{prop_stability}
	\begin{enumerate}
		\item  Let $\varphi\in \mathcal{A}_+$, then $f^*\varphi\in \mathcal{A}_+ $ with $|f^* \varphi|_1^+ \leq d  |\varphi|_1^+$ and $|f^* \varphi|_2^+ \leq d  |\varphi|_2^+$. In particular, $\|f^* \varphi \|_{\mathcal{A}^+} \leq d \|\varphi \|_{\mathcal{A}^+}$. 
		\item Let $\varphi\in \mathcal{A}_-$, then $f_*\varphi \in\mathcal{A}_- $  with $|f_* \varphi|_1^- \leq \delta  |\varphi|_1^-$ and $|f_* \varphi|_2^- \leq \delta  |\varphi|_2^-$. In particular, $\|f_* \varphi \|_{\mathcal{A}^-} \leq \delta \|\varphi \|_{\mathcal{A}^-}$. 
	\end{enumerate} 
\end{proposition}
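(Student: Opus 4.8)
The plan is to prove (1) in detail and to deduce (2) by symmetry, applying the same argument to $f^{-1}$ — which is again a generic birational map — with $d$, $u^+$, $T^+$, $f^*$ replaced by $\delta$, $u^-$, $T^-$, $f_*=(f^{-1})^*$, using $\delta^{-1}f_*\omega=\omega+dd^cu^-$ and the identity $u^-_\infty=u^-+\delta^{-1}(u^-_\infty\circ f^{-1})$ in place of the one for $u^+$ recorded below.

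The point on which everything rests is that normalized pull-back preserves $(H,a)$-regularity: if $R\in\mathcal{C}_1$ is more $(H,a)$-regular than $T^+$, with quasi-potential $0\geq U_R\geq u^+_\infty-a$, then $d^{-1}f^*R\in\mathcal{C}_1$ is again more $(H,a)$-regular than $T^+$. Indeed $U_R\circ f$ is qpsh (composition of a qpsh function with the holomorphic map $f$ on $\P^k\setminus I_f$, extended across the proper analytic set $I_f$ since it is bounded above), and, using $dd^cu^+=d^{-1}f^*\omega-\omega$ and $dd^c(U_R\circ f)=f^*R-f^*\omega$, a direct computation gives $\omega+dd^c(u^++d^{-1}\,U_R\circ f)=d^{-1}f^*R$, so $u^++d^{-1}\,U_R\circ f$ is a quasi-potential of $d^{-1}f^*R$; it is $\leq 0$ because $u^+\leq 0$ and $U_R\leq 0$, and the telescoping identity $u^+_\infty=u^++d^{-1}(u^+_\infty\circ f)$ together with $U_R\geq u^+_\infty-a$ yields $u^++d^{-1}\,U_R\circ f\geq u^+_\infty-d^{-1}a\geq u^+_\infty-a$.

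Given $\varphi\in\mathcal{A}_+$, I would then argue as follows. Since $f$ is holomorphic on $\P^k\setminus I_f$ and $I_f$ has zero Lebesgue measure, $f^*\varphi=\varphi\circ f\in L^\infty$ with $\|f^*\varphi\|_\infty\leq\|\varphi\|_\infty$, and on $\P^k\setminus I_f$ pull-back commutes with $d$, $d^c$ and preserves positivity of closed $(1,1)$-currents. Choosing $R_1^+$, $R_2^+$ more $(H,a)$-regular than $T^+$ realizing conditions a) and c) for $\varphi$, one gets on $\P^k\setminus I_f$
\[
d(f^*\varphi)\wedge d^c(f^*\varphi)=f^*(d\varphi\wedge d^c\varphi)\leq d\,|\varphi|_1^+\,(d^{-1}f^*R_1^+),\qquad \pm dd^c(f^*\varphi)=\pm f^*(dd^c\varphi)\leq d\,|\varphi|_2^+\,(d^{-1}f^*R_2^+),
\]
where $d^{-1}f^*R_1^+,d^{-1}f^*R_2^+\in\mathcal{C}_1$ are more $(H,a)$-regular than $T^+$ by the previous step. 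Integrating the first inequality against $\omega^{k-1}$ over $\P^k\setminus I_f$ (the current $d^{-1}f^*R_1^+$ puts no mass on $I_f$, being more $H$-regular than $T^+$, cf. Remark~\ref{rm_outside}) shows $f^*\varphi\in W^{1,2}(\P^k)$ by \cite[Proposition 3.1]{DinhSibonyW}. With $f^*\varphi\in L^\infty\cap W^{1,2}$ in hand, Remark~\ref{rm_outside} lets both displayed inequalities pass from $\P^k\setminus I_f$ to all of $\P^k$, so $f^*\varphi$ satisfies a) and c) with $|f^*\varphi|_1^+\leq d\,|\varphi|_1^+$ and $|f^*\varphi|_2^+\leq d\,|\varphi|_2^+$. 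Writing $\varphi=u_1-u_2$ with $u_i$ qpsh, the $u_i\circ f$ extend to qpsh functions on $\P^k$, so $f^*\varphi=f^*u_1-f^*u_2\in\mathrm{DSH}(\P^k)$ and equals $\varphi\circ f$ off a pluripolar set; hence $f^*\varphi\in\mathcal{A}_+$ and, using $d\geq 1$,
\[
\|f^*\varphi\|_{\mathcal{A}_+}=\|f^*\varphi\|_\infty+\sqrt{|f^*\varphi|_1^+}+|f^*\varphi|_2^+\leq\|\varphi\|_\infty+\sqrt{d}\,\sqrt{|\varphi|_1^+}+d\,|\varphi|_2^+\leq d\,\|\varphi\|_{\mathcal{A}_+}.
\]
This gives (1), and (2) follows by the symmetry above.

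The delicate point is not the algebra — the $(H,a)$-regularity lemma is a one-line potential computation — but the passage across the indeterminacy set: the identities and inequalities above are transparent on $\P^k\setminus I_f$, and one must justify that they persist as statements about positive closed currents on the whole of $\P^k$. This is precisely where $H$-regularity enters: the currents involved are more $H$-regular than $T^\pm$, hence charge no proper analytic set, so Remark~\ref{rm_outside} applies — but only after one has first secured $f^*\varphi\in W^{1,2}$, which itself comes out of condition a) together with the pull-back estimate.
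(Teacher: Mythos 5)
Your proposal is correct and follows essentially the same route as the paper: the heart of both arguments is the computation that $u^{+}+d^{-1}\,U_R\circ f$ is a negative quasi-potential of $d^{-1}f^*R$ bounded below by $u^+_\infty-d^{-1}a\geq u^+_\infty-a$ via the telescoping identity $u^+_\infty=u^++d^{-1}(u^+_\infty\circ f)$, combined with Remark~\ref{rm_outside} to pass the pointwise inequalities across $I_f$. The only cosmetic difference is that the paper invokes \cite{Dinh_Sibony_distrib} wholesale for the DSH stability under pull-back rather than arguing via the decomposition $\varphi=u_1-u_2$ (where, strictly speaking, $u_i\circ f$ is DSH rather than qpsh, since $dd^c(u_i\circ f)\geq -c\,f^*\omega$ is not of the form $-c'\omega$), but this does not affect the argument.
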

\begin{proof}
	By \cite{Dinh_Sibony_distrib}, the pull-back $f^* \varphi$ of a DSH function is again DSH.  We only prove $|f^* \varphi|_1^+ \leq d  |\varphi|_1^+$, the other bounds are similar. Since $\varphi \in \mathcal{A}_+$, then 
	\[d\varphi\wedge d^c\varphi \leq |\varphi|_1^+ R^+_1\]
where $R^+_1 = \omega + dd^c U_{R^+_1} \in \mathcal{C}_1$ with $0 \geq  U_{R^+_1} \geq u^+_\infty -a$. As $R^+_1$ is more $H$-regular than $T^+$, its pull-back by $f$ is well defined (\cite{Dethelin_Vigny}) hence, we can write
 	\[df^*\varphi\wedge d^cf^*\varphi=f^*(d\varphi\wedge d^c\varphi) \leq f^*(|\varphi|_1^+ R^+_1),\]
 note that the above computations make sense outside $I^+$ where none of the above currents have mass so the inequality is valid on the whole $\P^k$ by Remark~\ref{rm_outside}. Now, $f^*(R^+_1)= f^*(\omega)+  dd^c U_{R^+_1} \circ f$ and recall we denoted  $d^{-1}f^*(\omega) =\omega + dd^c u^+$ and $u^+_\infty=\sum_n d^{-n}u^+ \circ f^n $ where $u^+$ is negative. In particular, we have the relation $u^+_\infty= u^+ + d^{-1}  u^+_\infty \circ f $. 
 
  Hence a quasi-potential of $d^{-1}f^*(R^+_1)\in \mathcal{C}_1$ is $u^++ d^{-1}U_{R^+_1} \circ f$ which is negative and
 \[ u^++ d^{-1}U_{R^+_1} \circ f \geq  u^+ + d^{-1} u^+_\infty \circ f -d^{-1}a \geq u^+_\infty -d^{-1}a\]
 which concludes the proof as $-d^{-1}a \geq -a$.
\end{proof}
\begin{example}\normalfont \label{ex_smooth}
	Let $\varphi$ be a $C^2$ function, then obviously $d\varphi \wedge d^c \varphi \leq \|\varphi\|_{C^2} \omega$ and $0\leq \|\varphi\|_{C^2} \omega \pm dd^c \varphi$ so $\varphi \in \mathcal{A}^\pm$ with $\|\varphi\|_{\mathcal{A}}^{\pm} \leq \|\varphi\|_{C^2}$ (renormalizing the $C^2$-norm if necessary).	
\end{example}

\begin{example}\normalfont  \label{ex_DSH}
The space $\mathrm{DSH}_a^\infty(\P^k)$ was defined in the introduction. We claim that $\mathrm{DSH}_a^\infty(\P^k)\subset \mathcal{A}^{a+1/12}$ with a continuous inclusion. Indeed, it is clear that $\varphi \in  \mathrm{DSH}_a^\infty(\P^k)$ satisfies condition c) and d) with $|\varphi|_2^+ = 2\|T_i\|$ and any $a'\geq a$ ($\pm dd^c \varphi \leq T_1+T_2$). It remains to check a) and b). Observe that
\begin{align*} d\varphi \wedge d^c \varphi &= dd^c\varphi^2/2 - \varphi dd^c \varphi \leq  dd^c\varphi^2/2 + 3\|\varphi\|_\infty (T_1+T_2) \\
	&\leq dd^c\varphi^2/2 + 3\|\varphi\|_\infty \|T_1\| (2 \omega +dd^c u_1 +dd^c u_2) \\
	&\leq 6\|\varphi\|_\infty(\|\varphi\|_\infty+ \|T_1\|)\left(\omega+ dd^c \left( \frac{u_1 +u_2}{2} + \frac{\varphi^2}{12 \|\varphi\|_\infty(\|\varphi\|_\infty+ \|T_1\|)}-\frac{1}{12}\right)\right).
	\end{align*}
As 
\[ 0 \geq \frac{u_1 +u_2}{2} + \frac{\varphi^2}{12 \|\varphi\|_\infty		(\|\varphi\|_\infty+ \|T_1\|)}-\frac{1}{12}\geq -a-\frac{1}{12}+u_\infty^+\]
So $\varphi$ satisfies a) and b) for $a+1/12$ and $|\varphi|_1^\pm \leq 6\|\varphi\|_\infty(\|\varphi\|_\infty+ \|T_1\|)$ so $\varphi\in \mathcal{A}^{a+1/12}$ with $\|\varphi\|_{\mathcal{A}^{a+1/12}}\leq C \|\varphi\|_{\mathrm{DSH}_a^\infty(\P^k)}$ for a constant $C$ independent of $\varphi$. 
\end{example}

\section{Preliminary estimates}\label{Section Preliminary}
 We give some exponential estimates which we will need to prove the mixing of order $p$. While they are similar to \cite{Vigny_decay}, here we work with $\mathcal{A}_\pm$ instead of $C^2$ functions.

We will use the following dynamical cut-off function of \cite{Vigny_decay} whose construction we recall. Let   $h\in C^\infty(\R)$, an increasing map with
 \[\begin{cases} h(x)=0 \ \mathrm{for} \ x \leq -2,\\
 	             h(x)=1 \ \mathrm{for} \ x \geq -1
 \end{cases}
 	\]
 Let $v_n$ be a quasi-potential of
 $d^{-n} (f^*)^{n}(\omega)$. Then $v_n$ is smooth outside $I(f^{n})$ and has singularities
 in $O(\log \mathrm{dist}(x, I(f^{n})))$ (see \cite{Dethelin_Vigny}[Lemma 3.2.4]). Subtracting a
 large enough constant to $v_n$, we can assume that the function $w_n:=-\log(- v_n)$ is
 qpsh and in $W^*$ : it satisfies $T_n:=dd^c w_n+ \omega \geq 0$ and $dw_n\wedge d^c
 w_n\leq T_n$ (\cite{DinhSibonyW}[Proposition 4.7]). Since $w_n \geq v_n$, we
 have that $T_n$ is more $H$-regular than  $d^{-n} (f^*)^{n}(\omega)$.
 Now for $A >0$, we consider the smooth function:
 $$\chi_A := h\left(\frac{w_n}{A}\right).$$
 Then, there exists a constant $C$
 that does not depend on $A$ such that:
 \begin{itemize}
 	\item $d \chi_A \wedge d^c \chi_A = \frac{|h'(w_n/A)|^2}{A^2} d w_n \wedge d^c w_n \leq
 	\frac{C}{A^2} T_n$.
 	\item $0 \leq  \frac{C}{A}(T_n+\omega) \pm dd^c\chi_A $.
 	\item $\chi_A =0$ in a neighborhood of $I(f^{n})$
 	\item $\lim_{A\to \infty} 1-\chi_A \searrow  1_{I(f^{n})}$ where $1_{I(f^{n})}$ is the
 	characteristic function of $I(f^{n})$.
 \end{itemize}
Let $ \varphi \in \mathcal{A}$ ($a$ being fixed) so $(f^n)^*(\varphi)\in \mathcal{A}_+$ by Proposition \ref{prop_stability}. In particular, since $T^+ _s\wedge \omega^{k-s}$ is more $H$-regular than $\mu$  (because $\omega^{k-s}$ is more $H$-regular than $T^-_{k-s}$), it does not charge pluripolar sets so by Remark~\ref{defined_up_to_a_pp}, we can define  
 the quantity $c_n$ and the function $\varphi_n$ by:
\begin{equation}\label{varphi_n}
\begin{cases}
	c_n:=\int_{\P^k} f^*(\varphi_{n-1}) T^+ _s\wedge \omega^{k-s}\\
	\varphi_n:= f^*(\varphi_{n-1})-c_n
\end{cases}
\end{equation}
with $c_0:=\int_{\P^k} \varphi T_s^+ \wedge \omega^{k-s}$ and $\varphi_0:= \varphi-c_0$.
 By
construction, we have:
$$(f^n)^* \varphi = \sum_{i=0}^{n} c_i+ \varphi_n \ \mathrm{and} \  \int_{\P^k} \varphi
\circ f^n T^+_s \wedge \omega^s =\sum_{i=0}^{n} c_i. $$
Hence, the following proposition shows in particular that $T_s^+ \wedge
d^{-sn}f^n_*\omega^{k-s}$ converges to $\mu$ in the sense of currents with exponential
estimates for observables in $\mathcal{A}$.
\begin{proposition}\label{rate_measure}
	There exists a constant $C$, independent of $n$ and $\varphi\in \mathcal{A}$ such that
	\begin{equation}\label{estime1}
		|c_n| \leq C \sqrt{\delta}^{-n} \|\varphi\|_{\mathcal{A}_-}.
	\end{equation}
	Consequently, $ |\mu(\varphi_n)|=|\mu(\varphi) -\sum_0^n c_i|\leq C \sqrt{\delta}^{-n}
	\|\varphi\|_{\mathcal{A}_-}$.
\end{proposition}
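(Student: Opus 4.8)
The plan is to reduce $c_n$ to the difference of two push-forward measures and then extract the rate $\sqrt\delta^{-n}$ from a single Cauchy--Schwarz inequality (Lemma~\ref{CS}) fed by the finite energy condition~\eqref{energy_higher2}. First, by construction of the $\varphi_m$ one has $\int_{\P^k}\varphi_m\,T^+_s\wedge\omega^{k-s}=0$ for every $m$, hence $\sum_{i=0}^m c_i=\int_{\P^k}(\varphi\circ f^m)\,T^+_s\wedge\omega^{k-s}$. Using the invariance $f^*T^+_s=d^sT^+_s$, the identity $d^s=\delta^{k-s}$ and the projection formula, one checks that $(f^m)_*(T^+_s\wedge\omega^{k-s})=T^+_s\wedge\Theta_m$, where $\Theta_m:=\delta^{-m(k-s)}(f^m)_*\omega^{k-s}\in\mathcal C_{k-s}$; since $\Theta_m\to T^-_{k-s}$ in the Hartogs sense this already gives the convergence $T^+_s\wedge d^{-sn}f^n_*\omega^{k-s}\to\mu$ announced after~\eqref{varphi_n}, and it yields
\[ c_n=\int_{\P^k}\varphi\,T^+_s\wedge(\Theta_n-\Theta_{n-1}). \]

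Next I would exhibit a convenient potential of $\Theta_n-\Theta_{n-1}$. On the Zariski open set where $f$ is biholomorphic, $\Theta_m$ coincides with $(\omega+dd^c\sigma_m)^{k-s}=(\delta^{-m}(f^m)_*\omega)^{k-s}$, where $\sigma_m:=\sum_{j=0}^{m-1}\delta^{-j}\,u^-\circ f^{-j}$ decreases to $u^-_\infty$. Writing $A^{k-s}-B^{k-s}=(A-B)\wedge\sum_{j=0}^{k-s-1}A^jB^{k-s-1-j}$ for the commuting positive closed currents $A=\omega+dd^c\sigma_n$, $B=\omega+dd^c\sigma_{n-1}$, with $A-B=dd^c\eta_n$ and
\[ \eta_n:=\sigma_n-\sigma_{n-1}=\delta^{-(n-1)}\,u^-\circ f^{-(n-1)},\qquad \Xi_n:=\sum_{j=0}^{k-s-1}A^jB^{k-s-1-j} \]
(a positive closed $(k-s-1,k-s-1)$-current of mass $k-s$), one gets $\Theta_n-\Theta_{n-1}=dd^c(\eta_n\Xi_n)$, hence, with $S_n:=\Xi_n\wedge T^+_s$ a positive closed $(k-1,k-1)$-current of mass $k-s$,
\[ c_n=\big\langle\varphi,\,dd^c(\eta_n S_n)\big\rangle=-\big\langle d\varphi\wedge d^c\eta_n\wedge S_n\big\rangle. \]
Here is the main obstacle, and where the genericity assumptions enter: $\eta_n$, the $\Theta_m$ and all the push/pull-backs are a priori singular along the indeterminacy sets of $f^{\pm m}$, which need not be disjoint from $\supp\mu$. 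As in~\cite{Vigny_decay}, one makes the identity for $\Theta_m$, the two integrations by parts and the computations below legitimate by inserting the dynamical cut-off $\chi_A$ of Section~\ref{Section Preliminary} to remove the poles along $I(f^m)$, running the estimates for the regularised quantities, and letting $A\to\infty$ using~\eqref{AS} and~\eqref{energy_higher2}; because $\varphi$ is only $W^{1,2}\cap\mathrm{DSH}$, everything stays at the level of currents throughout (cf.\ Remark~\ref{rm_outside}).

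For the estimate itself, $S_n$ being closed gives $\langle d\varphi\wedge d^c\eta_n\wedge S_n\rangle=\langle d\eta_n\wedge d^c\varphi\wedge S_n\rangle$, so by Lemma~\ref{CS} and optimisation of the constant
\[ |c_n|\le \big\langle d\varphi\wedge d^c\varphi\wedge S_n\big\rangle^{1/2}\;\big\langle d\eta_n\wedge d^c\eta_n\wedge S_n\big\rangle^{1/2}. \]
For the first factor, condition b) gives $d\varphi\wedge d^c\varphi\le|\varphi|_1^-R_1^-$ with $R_1^-$ more $(H,a)$-regular than $T^-$; since $S_n\ge0$ is closed, $\langle R_1^-\wedge S_n\rangle=\langle\omega\wedge S_n\rangle=\|S_n\|=k-s$ (the $dd^c$ of a quasi-potential of $R_1^-$ drops out by Stokes), so this factor is $\le\sqrt{(k-s)|\varphi|_1^-}$. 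For the second factor, $d\eta_n\wedge d^c\eta_n=\delta^{-2(n-1)}(f^{n-1})_*(du^-\wedge d^cu^-)$ and $(f^{n-1})^*S_n=d^{s(n-1)}(f^{n-1})^*\Xi_n\wedge T^+_s$, so using $d^s=\delta^{k-s}$,
\[ \big\langle d\eta_n\wedge d^c\eta_n\wedge S_n\big\rangle=\delta^{(k-s-2)(n-1)}\,\big\langle du^-\wedge d^cu^-\wedge(f^{n-1})^*\Xi_n\wedge T^+_s\big\rangle. \]
One checks that $(f^{n-1})^*\Xi_n\wedge T^+_s$ is a positive current of mass $\sim(k-s)\delta^{-(k-s-1)(n-1)}$ which, suitably rescaled, $H$-converges to a fixed current against which $du^-\wedge d^cu^-$ has finite mass; that finiteness is exactly~\eqref{energy_higher2} (after one integration by parts it becomes the finiteness of $\langle u^-\,dd^cu^-\wedge(\text{fixed Green-type current})\rangle$). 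Hence the bracket is $O(\delta^{-(k-s-1)(n-1)})$, the second factor is $O(\delta^{-(n-1)/2})$, and, collecting also the $\|\varphi\|_\infty$ produced by the cut-off, $|c_n|\le C\sqrt\delta^{-n}\|\varphi\|_{\mathcal A_-}$, which is~\eqref{estime1}.

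Finally, pairing $(f^n)^*\varphi=\sum_{i=0}^nc_i+\varphi_n$ with the $f$-invariant probability $\mu$ gives $\mu(\varphi)=\sum_{i=0}^nc_i+\mu(\varphi_n)$; since $\sum|c_i|<\infty$ by~\eqref{estime1}, $\mu(\varphi)=\sum_{i\ge0}c_i$ and
\[ |\mu(\varphi_n)|=\Big|\sum_{i>n}c_i\Big|\le\sum_{i>n}C\sqrt\delta^{-i}\|\varphi\|_{\mathcal A_-}\le C'\sqrt\delta^{-n}\|\varphi\|_{\mathcal A_-}, \]
the geometric series converging because $\delta>1$. The delicate part of the whole argument is, once more, the reduction of Step~2 past the indeterminacy sets via $\chi_A$, \eqref{AS} and \eqref{energy_higher2}; once that is in place, the rate $\sqrt\delta^{-n}$ falls out of the single Cauchy--Schwarz above together with the degree relation $d^s=\delta^{k-s}$.
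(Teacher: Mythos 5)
Your argument is correct and rests on the same two pillars as the paper's proof: a single Cauchy--Schwarz (Lemma~\ref{CS} / \cite[Lemme 1]{Okada}) separating $d\varphi\wedge d^c\varphi$, controlled cohomologically via condition b) of $\mathcal{A}_-$ (this is where $|\varphi|_1^-$ and the rate enter), from $du^-\wedge d^cu^-$ paired with $T_s^+\wedge\sum(\delta^{-1}f_*\omega)^i\wedge\omega^{k-s-1-i}$, whose finiteness comes from \eqref{energy_higher2}; the degree relation $d^s=\delta^{k-s}$ then produces $\sqrt{\delta}^{-n}$. The bookkeeping is organized differently: the paper (following \cite[Prop.~1, Lemma~3]{Vigny_decay}) keeps the dynamics in the observable $\varphi_n=(f^n)^*\varphi-\sum c_i$ and telescopes only one step ($\delta^{-(k-s)}f_*\omega^{k-s}$ versus $\omega^{k-s}$), so the decay $\delta^{-n}$ shows up in the $d\varphi_n\wedge d^c\varphi_n$ factor; you keep $\varphi$ fixed and telescope the currents $\Theta_n-\Theta_{n-1}$, so the decay shows up in the $d\eta_n\wedge d^c\eta_n$ factor after you pull back by $f^{n-1}$. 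The two are conjugate under $(f^{n-1})^*$ and end up estimating the same quantities, so neither buys more than the other; yours is more self-contained, the paper's reuses \cite{Vigny_decay} verbatim except for one lemma. Two minor points to tighten: (i) the paper's first step is to reduce to smooth $\varphi$ via Proposition~\ref{prop_regularization}, which is what makes the wedge products $d\varphi\wedge d^c\varphi\wedge S_n$ and the Stokes manipulations legitimate --- your remark that ``everything stays at the level of currents'' should be replaced by that explicit regularization; (ii) in the last step, summability of $(c_i)$ alone does not give $\mu(\varphi)=\sum_{i\ge 0}c_i$; you also need $\mu(\varphi_n)\to 0$, i.e.\ the known equidistribution $T_s^+\wedge\Theta_n\to\mu$ tested against $\varphi$, which is part of the background from \cite{Dethelin_Vigny} (the paper glosses this too).
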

We will use the notation $S_\varepsilon$ for a smooth approximation of $\delta^{-1} f_*(\omega)=\omega+dd^c u^-$, more $H$-regular than $\delta^{-1} f_*(\omega)$ and let $u_\varepsilon$ be a quasi-potential of $S_\varepsilon$ that decreases to $u^-$ when $\varepsilon\to 0$. We denote
$R_\varepsilon:=\sum_{i=0}^{k-s-1}  S_\varepsilon^i\wedge \omega^{k-s-1-i}$. Similarly, we let $T^+_{1,\varepsilon'}$ be a smooth approximation of $T^+$ whose quasi-potentials decrease to $u^+_\infty$ when $\varepsilon' \to 0$.  
\begin{proof} By Proposition~\ref{prop_regularization}, it suffices to prove the proposition when $\varphi$ is smooth which we assume from now on. In particular, we follow step by step the proof of \cite[Proposition 1]{Vigny_decay}. \cite[Lemmas 1, 2 and 4]{Vigny_decay} can be applied directly, since they stand for $\varphi$ smooth or were proved by an approximation's argument which applies here. We just need to modify Lemma 3 of \cite{Vigny_decay} and show that 
	\[I_1:=\left|\int_{\P^k}  -\chi_Ad\varphi_{n} \wedge d^c u_\varepsilon \wedge
	(T_{1,\varepsilon'}^+)^s \wedge R_\varepsilon\right| \leq  C\|\varphi\|_{\mathcal{A}_-} \sqrt{\delta}^{-n} \]    
 where $C$ does not depend on $A$, $\varepsilon$, $\varepsilon'$, $n$ or $\varphi$. 
 
 Again, following the proof of \cite[Lemma 3]{Vigny_decay}, we see that to get the bound of the term $I_1$, it suffices to show
\[ \left|\int_{\P^k}  -\chi_Ad\varphi_{n} \wedge d^c u_\varepsilon \wedge
T^+_s \wedge R_\varepsilon\right| \leq  C\|\varphi\|_{\mathcal{A}_-} \sqrt{\delta}^{-n}.  \]
First, $\chi_A d\varphi_{n} \wedge d^c \varphi_n = \chi_A d(f^n)^*\varphi \wedge d^c (f^n)^* \varphi= \chi_A (f^n)^*\left(d \varphi \wedge d^c\varphi \right)$ since every term is smooth on the support of $\chi_A$.  Let $R_1^-\in \mathcal{C}_1$, more $(H,a)$-regular than $T^-$, such that $d \varphi \wedge d^c\varphi \leq |\varphi|_1^- R_1^-$.
Using Cauchy-Schwarz inequality \cite[Lemme 1]{Okada} gives:
	\begin{align*} &\left|\int_{\P^k}  -\chi_Ad\varphi_{n} \wedge d^c u_\varepsilon \wedge
	T^+_s \wedge R_\varepsilon\right|^2 \leq \\
	 &\quad \left|\int_{\P^k}  \chi_A  |\varphi|_1^-  (f^n)^*(R_1^-)\wedge 
	T^+_s \wedge R_\varepsilon \right| \left|\int_{\P^k}  \chi_A  d u_\varepsilon\wedge d^c u_\varepsilon \wedge T^+_s \wedge R_\varepsilon \right|. \end{align*}
Again, the second term of the product of the right-hand side is bounded by the proof of  \cite[Lemma 3]{Vigny_decay} so we just have to control the first term. Since $R_1^-$ is more $H$-regular than $T^-$ and $T^-$ and $ 
T^+_s$ are wedgeable, then $R_1^-$ and $ 
T^+_s$ are wedgeable and $R_1^-\wedge 
T^+_s$ is more $H$-regular than $T^-\wedge T^+_s$. In particular, the pull-back  $(f^n)^*(R_1^-\wedge 
T^+_s)$ is well defined and has mass $d_{s+1}^n$. Hence, we write  $\chi_A    (f^n)^*(R_1^-)\wedge 
T^+_s= \chi_A    d_s^{-n}(f^n)^*(R_1^-\wedge 
T^+_s)$. The current $R_\varepsilon$ has mass $k-s$, in particular, the integral can be bounded cohomologically by 
\[ \left|\int_{\P^k}  \chi_A  |\varphi|_1^-  (f^n)^*(R_1^-)\wedge 
T^+_s \wedge R_\varepsilon \right|\leq  \frac{|\varphi|_1^-(k-s)d_{s+1}^n}{d_s^n} =\frac{|\varphi|_1^-
	(k-s)}{\delta^n}\]
and the bound for $I_1$ follows. 
\end{proof}
In what follows, we denote by $ \mathcal{U}_{T^+_{s}}$ (resp. $\mathcal{U}_{T^-_{k-s}}$) the super-potential of $T_s^+$ (resp. $T^-_{k-s}$) given by the quasi-potential $u_{\infty}^+ \sum_{i=0}^{s-1} (T^+)^i\wedge \omega^{s-1-i}$  (resp. $u_{\infty}^- \sum_{i=0}^{k-s-1} (T^-)^i\wedge \omega^{k-s-1-i}$). 
\begin{lemma}
	There exists a constant $K$, independent of $n$, such that for any $R\in \mathcal{C}_1$, more $(H,a)$-regular than $T^-$, we have
\begin{equation}\label{eqbound_U-}0\leq - \mathcal{U}_{T^-_{k-s}} \left( \frac{1}{d_{s+1}^n} (f^n)^*(R\wedge T^+_{s})\right) \leq  K\sqrt{\delta}^n(1+a) \end{equation}
	and, for any $S\in \mathcal{C}_1$, more $(H,a)$-regular than $T^+$, we have 
\begin{equation}\label{eqbound_U+}0\leq - \mathcal{U}_{T^+_{s}} \left( \frac{1}{d_{s-1}^n} (f^n)_*(S\wedge T^-_{k-s})\right) \leq  K\sqrt{d}^n(1+a). \end{equation}	
\end{lemma}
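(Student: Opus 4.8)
The plan is to prove \eqref{eqbound_U-}; the bound \eqref{eqbound_U+} then follows by the same argument applied to $f^{-1}$ in place of $f$, which exchanges $T^+\leftrightarrow T^-$, $s\leftrightarrow k-s$, $d\leftrightarrow\delta$ and $f^*\leftrightarrow f_*$. Put $Q_n:=\tfrac{1}{d_{s+1}^n}(f^n)^*(R\wedge T^+_s)$. Since $R$ is more $H$-regular than $T^-$ and $T^-$ is wedgeable with $T^+_s$, the current $R\wedge T^+_s\in\mathcal C_{s+1}$ is defined and more $H$-regular than $T^-\wedge T^+_s$, so $(f^n)^*(R\wedge T^+_s)$ is defined with mass $d_{s+1}^n$ and $Q_n\in\mathcal C_{s+1}$. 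The lower bound is immediate: the quasi-potential defining $\mathcal U_{T^-_{k-s}}$ is $U_{T^-_{k-s}}=u^-_\infty\cdot\sum_{i=0}^{k-s-1}(T^-)^i\wedge\omega^{k-s-1-i}$, a non-positive current (the product of the non-positive function $u^-_\infty$ with a positive closed current), so $\mathcal U_{T^-_{k-s}}(Q_n)=\langle U_{T^-_{k-s}},Q_n\rangle\le 0$, first for $Q_n$ smooth and then in general by a decreasing $H$-regularization; hence $-\mathcal U_{T^-_{k-s}}(Q_n)\ge 0$.

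For the upper bound I would first reduce, via Proposition~\ref{prop_regularization} applied to a quasi-potential of $R$ together with $H$-continuity of the super-potential, to the case where $R$ is smooth, so all wedge products below are classical; fix a quasi-potential $U_R$ of $R$ with $0\ge U_R\ge u^-_\infty-a$. Then $U_{R\wedge T^+_s}:=U_R\,T^+_s+\omega\wedge U_{T^+_s}$ is a quasi-potential of $R\wedge T^+_s$ (with $U_{T^+_s}$ the chosen quasi-potential of $T^+_s$), and a quasi-potential of $Q_n$ is $U_{Q_n}=W_n+\tfrac{1}{d_{s+1}^n}(f^n)^*U_{R\wedge T^+_s}$, where $W_n\le 0$ is a quasi-potential of $\tfrac{1}{d_{s+1}^n}(f^n)^*\omega^{s+1}$. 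As with the $v_n$ and the cut-off $\chi_A$ of Section~\ref{Section Preliminary}, $W_n$ has only logarithmic-type poles along $I(f^n)$ and is, up to $o(1)$, bounded below by a fixed current; since $I(f^n)$ is disjoint from the polar sets relevant to $T^-_{k-s}$ by \eqref{AS}, the pairings $\langle W_n,T^-_{k-s}\rangle$ and $\langle W_n,\omega^{k-s}\rangle$ are $O(1)$ uniformly in $n$. Integrating by parts, $-\mathcal U_{T^-_{k-s}}(Q_n)=-\langle U_{Q_n},T^-_{k-s}-\omega^{k-s}\rangle-m_{T^-_{k-s}}$, and since the $W_n$-part and the constant $m_{T^-_{k-s}}$ are $O(1)$, everything reduces to bounding
\[ \tfrac{1}{d_{s+1}^n}\big\langle U_{R\wedge T^+_s},\ (f^n)_*T^-_{k-s}-(f^n)_*\omega^{k-s}\big\rangle. \]

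Here one uses $(f^n)_*T^-_{k-s}=\delta^{(k-s)n}T^-_{k-s}$ and writes $(f^n)_*\omega^{k-s}$ as its mass times the normalized push-forward $\widetilde\Omega_n\in\mathcal C_{k-s}$, which $H$-converges to $T^-_{k-s}$; together with the degree identity $d^s=\delta^{k-s}$ and the mass computations of \cite{Dethelin_Vigny} one gets $\|(f^n)_*\omega^{k-s}\|=\delta^{(k-s)n}$ and that the resulting prefactor $\delta^{(k-s)n}/d_{s+1}^n$ is $\le C\,\delta^{\,n}$. The crucial point is that the $T^-_{k-s}$-contributions cancel and there remains $C\,\delta^{\,n}\,\langle U_{R\wedge T^+_s},\,T^-_{k-s}-\widetilde\Omega_n\rangle$; splitting $U_{R\wedge T^+_s}=U_R\,T^+_s+\omega\wedge U_{T^+_s}$, using $T^+_s=d^{-sn}(f^n)^*T^+_s$ and the projection formula, this pairing becomes (up to a fixed-current term handled the same way) the difference $\langle U_R,\mu\rangle-\langle U_R,T^+_s\wedge\widetilde\Omega_n\rangle$ — i.e. exactly the quantity controlled by Proposition~\ref{rate_measure}, whose left-hand side, as one checks, equals $\langle\varphi,\,T^+_s\wedge(T^-_{k-s}-\widetilde\Omega_n)\rangle$. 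This is $O((1+a)\sqrt\delta^{-n})$, the factor $1+a$ coming from $u^-_\infty-a\le U_R\le 0$ and from $\mathcal U_{T^+}(\mu)>-\infty$. Multiplying, $-\mathcal U_{T^-_{k-s}}(Q_n)\le C\,\delta^{\,n}\cdot C(1+a)\sqrt\delta^{-n}+O(1)\le K(1+a)\sqrt\delta^{\,n}$.

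The main obstacle is precisely this last estimate — $\langle U_{R\wedge T^+_s},T^-_{k-s}-\widetilde\Omega_n\rangle=O((1+a)\sqrt\delta^{-n})$, uniformly in $n$ and with the stated linear dependence on $a$. Because $U_R$ is only bounded below (by $u^-_\infty-a$), it is not in $\mathcal A_-$ and Proposition~\ref{rate_measure} cannot be applied verbatim: one must truncate $U_R$ at a level $-M$, estimate with the bounded DSH function $\max(U_R,-M)$, and control the truncation error uniformly in $n$ using the finite-energy condition \eqref{energy_higher2} (which is exactly what bounds the corresponding tails), or equivalently re-run the proof of Proposition~\ref{rate_measure} with the quasi-potential $U_R$ in place of the test function. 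One should also be careful with the degree and mass bookkeeping for $(f^n)^*$ and $(f^n)_*$ in bidegrees $s$, $s+1$ and $k-s$; this is where the exponent $\tfrac12$ in $\sqrt\delta^{\,n}$ ultimately originates, namely from the full extra factor $\sqrt\delta^{-n}$ gained, through the cancellation above, over the naive estimate $O((1+a)\delta^{\,n})$.
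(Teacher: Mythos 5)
Your reduction (symmetry of super-potentials plus the projection formula, leading to $\delta^{\,n}\langle U_{R\wedge T^+_s},\,T^-_{k-s}-\widetilde\Omega_n\rangle$ up to bounded terms) is a reasonable reformulation, and you have correctly located the difficulty: everything hinges on showing that this last pairing is $O((1+a)\sqrt{\delta}^{-n})$. But that estimate is exactly where your argument stops being a proof. Proposition~\ref{rate_measure} cannot be invoked for $\varphi=U_R$: $U_R$ is unbounded (it is only bounded below by $u^-_\infty-a$, which equals $-\infty$ on a pluripolar set), it need not lie in $W^{1,2}$, and above all $dU_R\wedge d^cU_R$ is not dominated by a positive closed current with controlled potential, so none of the hypotheses defining $\mathcal{A}_-$ hold and ``re-running the proof of Proposition~\ref{rate_measure} with $U_R$ in place of the test function'' fails at the very first use of condition b). The truncation alternative is also not carried out: $\|\max(U_R,-M)\|_{\mathcal{A}_-}$ grows with $M$, the constant in Proposition~\ref{rate_measure} depends on the regularity parameter $a$, and the truncation error $\langle U_R-\max(U_R,-M),\,T^+_s\wedge\widetilde\Omega_n\rangle$ would have to be shown to decay in $M$ \emph{uniformly in $n$ and at a rate compatible with} $\sqrt{\delta}^{-n}$ (so $M$ would have to depend on $n$); no such quantitative statement is provided, and establishing one is essentially as hard as the lemma itself.

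The paper's mechanism avoids this entirely and is the missing idea. After reducing to $R$ smooth, one writes the relevant quantity, via Stokes and invariance, as $\frac{d_s^n}{d_{s+1}^n}\int_{\P^k}(f^n)^*(d^cU_R)\wedge T^+_s\wedge dU_{T^-_{k-s}}$ and applies the Cauchy--Schwarz inequality of \cite{Okada} to this pairing: one factor is the fixed energy $\int du^-_\infty\wedge d^cu^-_\infty\wedge T^+_s\wedge Q=O(1)$, the other is $I_2=\int(f^n)^*(dU_R)\wedge(f^n)^*(d^cU_R)\wedge T^+_s\wedge Q$, which by Stokes equals $-\frac{d_{s+1}^n}{d_s^n}\int U_R\,dd^cU_R\wedge T^+_s\wedge Q_n$ with $Q_n=d_{s+1}^{-n}(f^n)_*Q$, and is then bounded by $O(\delta^{-n}(1+a))$ using only the two-sided bound $u^-_\infty-a\le U_R\le 0$ together with the Hartogs convergence $Q_n\to(k-s)T^-_{k-s-1}$. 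The square root in $\sqrt{\delta}^{\,n}$ thus comes from Cauchy--Schwarz applied to an energy of size $O(\delta^{-n})$, not from an equidistribution rate of $\widetilde\Omega_n$ tested against an unbounded potential. A secondary point: your claim that $\langle W_n,T^-_{k-s}\rangle=O(1)$ for a quasi-potential $W_n$ of $d_{s+1}^{-n}(f^n)^*\omega^{s+1}$ is asserted rather than proved; the paper instead disposes of the $\omega\wedge T^+_s$ contribution by citing the case $R=\omega$ from \cite{Vigny_decay}, where the bound is $O(\sqrt{\delta}^{\,n})$, not $O(1)$.
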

\begin{proof} By symmetry, we only prove the first inequality that we write in terms of quasi-potentials. Continuity of pull-backs, intersections and evaluations for the Hartogs' convergence makes it enough to prove the lemma for $R$ smooth, which we assume from now on. We follow the proof \cite[Lemma 8]{Vigny_decay} where the result was proved when $R=\omega$.
 
In particular, writing $R= \omega + dd^c U_R$ with $0\geq U_R \geq u^-_\infty -a$, it suffices to show
		\[\left| \int_{\P^k} \frac{1}{d_{s+1}^n} (f^n)^*(dd^cU_R\wedge T^+_s)\wedge U_{T^-_{k-s}}  \right|\leq K \sqrt{\delta}^n(1+a). \]
	We apply Stokes and the invariance (up to regularizing every terms and using the cut-off function $\chi_A$ as we do in more details in Proposition~\ref{prop-principale}) $(f^n)^*(d^cU_R\wedge T^+_s)= d_s^nd^c(f^n)^*(U_R)\wedge T^+_s$ to rewrite it as:
\[\int_{\P^k} -\frac{d_s^n}{d_{s+1}^n} (f^n)^*(d^cU_R)\wedge T^+_s\wedge d U_{T^-_{k-s}}  \]
so by Cauchy-Schwarz inequality \cite[Lemme 1]{Okada}, writing $U_{T^-_{k-s}}=  u_{\infty}^- Q$ with $Q:=\sum_{i=0}^{k-s-1}  (T^-)^i\wedge \omega^{k-s-1-i}$:
\begin{align*}
		&\frac{d_s^{2n}}{d_{s+1}^{2n}} \left|\int_{\P^k} (f^n)^*(d^cU_R)\wedge T^+_s\wedge d U_{T^-_{k-s}}\right|^2 \leq \\
		& \quad\quad  		\frac{d_s^{2n}}{d_{s+1}^{2n}}\left| \int_{\P^k} (f^n)^*(dU_R)\wedge (f^n)^*(d^cU_R)\wedge T^+_s\wedge Q   \right|	\left|\int_{\P^k} du_\infty^- \wedge d^cu_\infty^-\wedge T^+_s\wedge Q\right|.
\end{align*}
In \cite[Lemma 8]{Vigny_decay}, it is shown that the second integral of the product is bounded by some constant $C_1^2$. So by Stokes, we are left with 
\begin{align*}I_2:&= \int_{\P^k} (f^n)^*(dU_R)\wedge (f^n)^*(d^cU_R)\wedge T^+_s\wedge Q= \int_{\P^k} \frac{1}{d_s^n}(f^n)^*(dU_R\wedge d^cU_R\wedge T^+_s)\wedge Q\\
&=   \int_{\P^k} \frac{d^n_{s+1}}{d_s^n} dU_R\wedge d^cU_R\wedge T^+_s\wedge \frac{1}{d_{s+1}^n}(f^n)_*Q\\
&= \int_{\P^k} -\frac{d^n_{s+1}}{d_s^n}U_R dd^cU_R\wedge T^+_s\wedge \frac{1}{d_{s+1}^n}(f^n)_*Q
\end{align*}
where we should have used the cut-off function $\chi_A$ to make the computations rigorous.  Our goal is to get rid of terms depending on $R$ using Stokes and  $0\geq U_R \geq u^-_\infty -a$. Let us denote $Q_n:= d_{s+1}^{-n}(f^n)_*Q$. It is a positive closed current of mass $k-s$ and we are left with $\int_{\P^k} -U_R dd^cU_R\wedge T^+_s\wedge Q_n=\int_{\P^k} -U_R R\wedge T^+_s\wedge Q_n + \int_{\P^k} U_R \omega \wedge T^+_s\wedge Q_n$. The last integral is $\leq 0$ because $U_R \leq 0$ and, using $-U_R \leq -u^-_\infty+a$, we deduce
\[\frac{d_s^{n}}{d_{s+1}^{n}} I_2 \leq \int_{\P^k} -u^-_\infty R\wedge T^+_s\wedge Q_n+  a \int_{\P^k}  R\wedge T^+_s\wedge Q_n\leq \int_{\P^k} -u^-_\infty R\wedge T^+_s\wedge Q_n+  a(k-s).\]
Replacing $R= \omega + dd^c U_R$, we deduce 
\[\frac{d_s^{n}}{d_{s+1}^{n}} I_2 \leq\int_{\P^k} -u^-_\infty \omega\wedge T^+_s\wedge Q_n + \int_{\P^k} -u^-_\infty dd^c U_R\wedge T^+_s\wedge Q_n+  a(k-s).\]
As $Q_n \to (k-s)T^-_{k-s-1}$ in the Hartogs' sense, the first integral converges to $ \int_{\P^k} -u^-_\infty \omega\wedge T^+_s\wedge (k-s)T^-_{k-s-1}$ which is finite hence it is uniformly bounded in $n$ by a constant $C$ (independently of $R$). We apply Stokes and we use $dd^cu^-_\infty = T^--\omega$: 
\begin{align*}\frac{d_s^{n}}{d_{s+1}^{n}} I_2 &\leq \int_{\P^k} -U_R dd^cu^-_\infty \wedge T^+_s\wedge Q_n+  a(k-s)+C \\
	   &\leq \int_{\P^k} -U_R T^- \wedge T^+_s\wedge Q_n+ \int_{\P^k} U_R \omega  \wedge T^+_s\wedge Q_n+  a(k-s)+C \\
	   & \leq \int_{\P^k} -U_R T^- \wedge T^+_s\wedge Q_n+  a(k-s)+C
\end{align*}
since $U_R \leq 0$. Using now $-U_R \leq - u^-_\infty + a$ gives 
\begin{align*}\frac{d_s^{n}}{d_{s+1}^{n}} I_2 & \leq \int_{\P^k} - u^-_\infty T^- \wedge T^+_s\wedge Q_n+  2a(k-s)+C.
\end{align*}
As above, the integral $\int_{\P^k} u^-_\infty T^- \wedge T^+_s\wedge Q_n$ converges to $\int_{\P^k} u^-_\infty T^- \wedge T^+_s\wedge (k-s)T^-_{k-s-1}$ which is bounded hence it is uniformly bounded in $n$ (independently of $R$), hence there exits a constant $C'$, independent of $R$, such that  
\begin{align*}\frac{d_s^{n}}{d_{s+1}^{n}} I_2 & \leq  2a(k-s)+C'.
\end{align*}
Combining the above and $\frac{d_s}{d_{s+1}}=\delta$, we deduce that 
 	\[\left| \int_{\P^k} \frac{1}{d_{s+1}^n} (f^n)^*(dd^cU_R\wedge T^+_s)\wedge U_{T^-_{k-s}}  \right|\leq C_1\delta^n \sqrt{\delta^{-n}(C' +2a(k-s))} \leq K \sqrt{\delta}^n(1+a)\]
 	for $K$ large enough. 
\end{proof}
%Recall that $\varphi_n$ was defined in \ref{varphi_n}.
%\begin{corollary}
%There exists a constant $K>0$ such that for $\varphi \in C^2(\P^k)$ and $\varepsilon$ small enough, we have
%\[ \int_{(\P^k)^2, \ x\notin I(f^n)} - d\varphi_n (x) \wedge d^c\varphi_n(x) \wedge \frac{1}{d_s^n} (f^n)^* T^+_{s,\varepsilon}(x) \wedge T^-_{k-s,\varepsilon}(y)\wedge V\leq \frac{K(1+a)}{\sqrt{\delta}^n}\|\varphi\|^2_{\mathcal{A}_-}. \]
%\end{corollary}
%\begin{proof}
%	Outside $I(f^n)$, we have $d\varphi_n \wedge d^c\varphi_n =(f^n)^*(d\varphi \wedge d^c\varphi)\leq \|\varphi\|^2_{\mathcal{A}_-} R^-_1$ where $ R^-_1 \in \mathcal{C}_1$ is more $(H,a)$ than $T^-$. So
%	\begin{align*}
%&\int_{(\P^k)^2, \ x\notin I(f^n)} - d\varphi_n (x) \wedge d^c\varphi_n(x) \wedge \frac{1}{d_s^n} (f^n)^* T^+_{s,\varepsilon}(x) \wedge T^-_{k-s,\varepsilon}(y)\wedge V \\ 
%&\leq \int_{(\P^k)^2} -\|\varphi\|^2_{\mathcal{A}_-}  \frac{1}{d_s^n} (f^n)^*(R^-_1 \wedge T^+_{s,\varepsilon}(x)) \wedge T^-_{k-s,\varepsilon}(y)\wedge V	\\
%&\leq  -\frac{d_{s+1}^n}{d_s^n}\|\varphi\|^2_{\mathcal{A}_-} \int_{\P^k} \frac{1}{d_{s+1}^n} (f^n)^*(R\wedge (T^+_{1,\varepsilon})^s)\wedge U_{T^-_{k-s, \varepsilon}}  \\
%&\leq K \|\varphi\|^2_{\mathcal{A}_-}\sqrt{\delta}^n(1+a), 
%	\end{align*}
%by the previous lemma and the fact that $d_s/d_{s+1}= \delta$. 	
%	\end{proof}
	
\section{Exponential mixing of all orders and CLT}\label{Section proofs}
	
We prove the following proposition which will imply the mixing of all orders and the CLT.
\begin{proposition}\label{prop-principale}
	For all $p\geq 1$, there exists $C_p >0$ with  $(C_p) \nearrow  $ such that  $ \forall \varphi^0, \dots, \varphi^p \in \mathcal{A}$ and $\forall 0=:n_0\leq n_1 \leq \dots \leq n_p$, we have 
	\begin{align*}&\left|  \langle \mu, \prod_{j=0}^p \varphi^j\circ f^{n_j} \rangle -  \langle \mu, \varphi^0 \rangle. \langle \mu, \prod_{j=1}^p \varphi^j\circ f^{n_j}  \rangle\right| \leq \\
	&	C_p d^{-\frac{s(n_{1}-n_0)}{2k}}\|\varphi^0\|_{\mathcal{A}^+} \|\varphi^1\|_{\mathcal{A}^-} \dots \|\varphi^p\|_{\mathcal{A}^-}. \end{align*}	  
\end{proposition}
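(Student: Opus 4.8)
The plan is to run, for this single decorrelation, the order-one scheme of \cite[Proposition~1]{Vigny_decay}, but (i) with $C^2$ test functions replaced by elements of $\mathcal A_\pm$, for which Proposition~\ref{rate_measure} and the Lemma of \S\ref{Section Preliminary} were designed, and (ii) with the whole ``future'' factor $\Psi:=\prod_{j=1}^p\varphi^j\circ f^{n_j}$ carried along as a single object. Only the constant $C_p$, not the rate, will deteriorate with $p$.

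\textbf{Reductions.} By Proposition~\ref{prop_regularization} (enlarging $a$ by an amount the constants will not feel) it suffices to treat smooth $\varphi^0,\dots,\varphi^p$. Since $\varphi^0\circ f^{n_0}=\varphi^0$, the left-hand side equals $|\langle\mu,(\varphi^0-\langle\mu,\varphi^0\rangle)\Psi\rangle|$, and replacing $\varphi^0$ by $\varphi^0-\langle\mu,\varphi^0\rangle$ costs a factor $2$ in $\|\varphi^0\|_{\mathcal A_+}$; so we may assume $\langle\mu,\varphi^0\rangle=0$ and must bound $|\langle\mu,\varphi^0\Psi\rangle|$. For $j\ge1$ one has $d(\varphi^j\circ f^{n_j})\wedge d^c(\varphi^j\circ f^{n_j})=(f^{n_j})^*(d\varphi^j\wedge d^c\varphi^j)\le|\varphi^j|_1^-\,(f^{n_j})^*R_1^{-,j}$ with $R_1^{-,j},R_2^{-,j}$ more $(H,a)$-regular than $T^-$ (and similarly for $\pm dd^c(\varphi^j\circ f^{n_j})$); the crucial remark is that $R_i^{-,j}\wedge T^+_s$ is more $H$-regular than $T^-\wedge T^+_s$, so $(f^{n_j})^*(R_i^{-,j}\wedge T^+_s)$ is well defined of mass $d_{s+1}^{n_j}$, and — using $(f^{n_j})^*T^+_s=d_s^{n_j}T^+_s$ — the natural normalisation $d_s^{-n_j}(f^{n_j})^*(R_i^{-,j}\wedge T^+_s)=(f^{n_j})^*R_i^{-,j}\wedge T^+_s$ has mass $(d_{s+1}/d_s)^{n_j}=\delta^{-n_j}\le\delta^{-n_1}$. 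This collapse of the mass of a pulled-back $T^-$-regular current wedged against $T^+_s$ is the source of the decay.

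\textbf{Core computation.} Write $\mu=T^+_s\wedge T^-_{k-s}$ and $T^-_{k-s}=\omega^{k-s}+dd^cU_{T^-_{k-s}}$ with $U_{T^-_{k-s}}=u^-_\infty Q$, $Q=\sum_{i=0}^{k-s-1}(T^-)^i\wedge\omega^{k-s-1-i}$ (and symmetrically for $T^+_s$). Following \cite{Vigny_decay} step by step: regularise all currents, insert the cut-offs $\chi_A$ of \S\ref{Section Preliminary} attached to the indeterminacy sets $I(f^{n_j})$ that occur, apply Stokes to move the $dd^c$'s onto $\varphi^0\Psi$, expand $d\Psi$ and $dd^c\Psi$ by the Leibniz rule into $O(p)$ terms of the form $(\mathrm{bounded})\cdot(f^{n_j})^*d\varphi^j$ and $(\mathrm{bounded})\cdot(f^{n_j})^*dd^c\varphi^j$ (the Leibniz cross-gradients handled by Lemma~\ref{CS}), and apply the Cauchy--Schwarz inequality for positive closed currents (\cite[Lemme~1]{Okada}) as in the proof of Proposition~\ref{rate_measure}. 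One Cauchy--Schwarz factor then carries only $\varphi^0$'s data — $d\varphi^0\wedge d^c\varphi^0\le|\varphi^0|_1^+R_1^{+,0}$, $\pm dd^c\varphi^0\le|\varphi^0|_2^+R_2^{+,0}$ with $R_i^{+,0}$ more $(H,a)$-regular than $T^+$ — wedged against $T^-_{k-s}$, $Q$ and $u^\pm_\infty$, which is finite and uniform in $n_1,A,\varepsilon$ by the same $H$-regularity arguments that give $\mathcal U_{T^+}(T^+_s\wedge T^-_{k-s})>-\infty$; this yields $\|\varphi^0\|_{\mathcal A_+}$. The other factor carries the terms $(f^{n_j})^*R_i^{-,j}$ wedged against $T^+_s$ and fixed currents; by the mass collapse above and the projection formula each such integral is $\delta^{-n_j}$ times a quantity $-\mathcal U_{T^-_{k-s}}\big(d_{s+1}^{-n_j}(f^{n_j})^*(R_i^{-,j}\wedge T^+_s)\big)$ bounded by $K\sqrt\delta^{n_j}(1+a)$ via \eqref{eqbound_U-}--\eqref{eqbound_U+}, or is bounded cohomologically using $\chi_A\le1$; the residual ``$c_n$''-type terms are controlled by \eqref{estime1}. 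Passing to the limit $A\to\infty$ and $\varepsilon\to0$ (checking, as in \S\ref{Section Preliminary} and licensed by \eqref{AS}, that no mass escapes onto the pluripolar indeterminacy loci), multiplying the two square roots, and using $d^s=\delta^{k-s}$ together with $\sqrt\delta^{-n_1}\le d^{-\frac{s n_1}{2k}}$ (equivalently $\delta=d^{s/(k-s)}\ge d^{s/k}$) produces the asserted bound; the $O(p)$ Leibniz terms and the factors $\prod_j\|\varphi^j\|_\infty$, $\sqrt{|\varphi^j|_1^-}$, $|\varphi^j|_2^-$ assemble into $\prod_{j=1}^p\|\varphi^j\|_{\mathcal A_-}$ and a nondecreasing constant $C_p$.

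\textbf{Main obstacle.} The crux is that $\Psi$ does not belong to $\mathcal A$: it is a product of \emph{forward} iterates of $\mathcal A_-$ functions, and a quasi-potential of $d^{-m}(f^m)^*R$ (for $R$ more $H$-regular than $T^-$) tends to $u^+_\infty$, not $u^-_\infty$, so $(f^m)^*\varphi^j\notin\mathcal A_-$ and $\Psi$ cannot be fed into the order-one theorem as a test function. One must instead open the product and carry the large-mass currents $(f^{n_j})^*R_i^{-,j}$ through the whole computation, keeping each of them always wedged with $T^+_s$ (where its mass drops to $\delta^{-n_j}$) or with the fixed currents $u^\pm_\infty,Q$; arranging the Stokes integrations and the single Cauchy--Schwarz so that \emph{every} one of the resulting terms — notably the cross term containing $d\varphi^0\wedge d^c\Psi$ — still decays at the full rate is the combinatorial heart of the argument, alongside the routine but lengthy justification of all the integrations by parts in the presence of the indeterminacy sets.
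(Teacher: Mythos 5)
Your overall architecture (diagonal decomposition, cut-offs, Leibniz expansion, Cauchy--Schwarz, super-potential bounds) matches the paper's, and your analysis of the $\varphi^j$-side --- the mass collapse of $(f^{n_j})^*R^{-,j}_i\wedge T^+_s$ to $\delta^{-n_j}$ --- is correct. But there is a genuine gap: you keep $\varphi^0$ at time $0$ and try to extract all the decay from the forward iterates, and two families of terms then receive no decay at all. First, in the decomposition $[\Delta]=\sum_i\omega^i(x)\wedge\omega^{k-i}(y)+dd^cV$, the surviving bidegree term is $\bigl(\int\tilde\varphi^0\,T^-_{k-s}\wedge\omega^s\bigr)\bigl(\int\Psi\,T^+_s\wedge\omega^{k-s}\bigr)$; normalizing $\varphi^0$ against $\mu=T^+_s\wedge T^-_{k-s}$ does not make $\int\tilde\varphi^0\,T^-_{k-s}\wedge\omega^s$ vanish, and the discrepancy between the two normalizations is $O(1)$, not exponentially small. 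Second, the Leibniz term $\Psi(x)\,dd^c\varphi^0(y)$ contains no derivative of any $\varphi^j$; bounding it as you propose leads to $-\mathcal{U}_{\frac{1}{d_s^{n}}(f^{n})^*T^+_{s}}\bigl(R^{0,+}_2\wedge T^-_{k-s}\bigr)$, and since $\frac{1}{d_s^{n}}(f^{n})^*T^+_s=T^+_s$ this is a fixed $O(1)$ quantity. (Test with $p=1$ and $\varphi^1\equiv 1$: the terms you keep would have to cancel each other, so they cannot all be individually small.)

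The missing idea is the time shift that the paper performs: set $m:=[s(n_1-n_0)/k]$, use invariance of $\mu$ to replace each $n_j$ by $n'_j=n_j-m$, and decompose $\varphi^0\circ f^{-m}=\sum_{i=0}^{-m}c_i+\varphi^0_{-m}$ as in \eqref{varphi_n} applied to $f^{-1}$. This fixes both problems at once: $\int\varphi^0_{-m}\,\omega^s\wedge T^-_{k-s}=0$ kills the bidegree term exactly; Proposition~\ref{rate_measure} (for $f^{-1}$) controls $\sum c_i-\langle\mu,\varphi^0\rangle$ by $C\sqrt{d}^{-m}\|\varphi^0\|_{\mathcal{A}^+}$; and the pure-$\varphi^0$ Leibniz terms now involve $(f^m)_*(R^{0,+}_i\wedge T^-_{k-s})$, so \eqref{eqbound_U+} yields $d^{-m/2}$. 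The $\varphi^j$-side gives $\delta^{-n'_j/2}$ with $n'_j\ge n_1-m$, and the choice of $m$ precisely balances $d^{-m/2}$ against $\delta^{-(n_1-m)/2}$ via $d^s=\delta^{k-s}$, producing the rate $d^{-s(n_1-n_0)/(2k)}$. Without this balancing step your scheme cannot close, even though your rate arithmetic $\sqrt{\delta}^{-n_1}\le d^{-sn_1/(2k)}$ is correct for the terms that do decay.
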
	
Recall that $\mathcal{A}_\pm$ is stable under composition and its elements are well defined outside a pluripolar set, in particular, the different quantities in the proposition are well defined. We follow the strategy of proof of \cite[Proposition 2]{Vigny_decay}. 
\begin{proof}
	First, by Proposition~\ref{prop_regularization}, the fact that $\mu$ does not charge pluripolar set, and dominated convergence, it is enough to consider the case where all $\varphi_i$ are smooth, which we assume from now on.

	For simplicity, write $m:=  \left[ s  (n_1-n_0)/k\right] $ (where $\left[ .\right]$ denotes the integer part) and $n'_j:=n_j-m$. By invariance of $\mu$
	\[  \langle \mu, \prod_{j=0}^p \varphi^j\circ f^{n_j} \rangle -  \langle \mu, \varphi^0 \rangle. \langle \mu, \prod_{j=1}^p \varphi^j\circ f^{n_j}  \rangle=  \langle \mu, \prod_{j=0}^p \varphi^j\circ f^{n'_j} \rangle -  \langle \mu, \varphi^0 \rangle. \langle \mu, \prod_{j=1}^p \varphi^j\circ f^{n_j} \rangle .\]
We write $\varphi^0\circ f^{-m}= \sum_{i=0}^{-m} c_i + \varphi^0_{-m}$ where the quantities are defined in \eqref{varphi_n} replacing $f$ with $f^{-1}$ so we put $-$ signs. 	By Proposition~\ref{rate_measure} (applied to $f^{-1}$), we have $|\sum_{i=0}^{-m} c_i -\mu(\varphi^0)|\leq C \sqrt{d}^{-m} \|\varphi^0\|_{\mathcal{A}^+} $. Hence:
\begin{align*} 
\langle \mu, \prod_{j=0}^p \varphi^j\circ f^{n_j} \rangle -  \langle \mu, \varphi^0 \rangle. \langle \mu, \prod_{j=1}^p \varphi^j\circ f^{n_j}  \rangle&= \\ \langle \mu, \varphi_{-m}^0\prod_{j=1}^p \varphi^j\circ f^{n'_j} \rangle +  \left(\sum_{i=0}^{-m} c_i-\langle \mu, \varphi^0 \rangle\right) \langle \mu, \prod_{j=1}^p \varphi^j\circ f^{n_j}  \rangle.
\end{align*}
By the above and Remark~\ref{defined_up_to_a_pp} , 
\[\left|\left(\sum_{i=0}^{-m} c_i-\langle \mu, \varphi^0 \rangle\right) \langle \mu, \prod_{j=1}^p \varphi^j\circ f^{n_j}  \rangle\right| \leq C \sqrt{d}^{-m} \|\varphi^0\|_{\mathcal{A}^+} \|\varphi^1\|_{\infty} \dots \|\varphi^p\|_{\infty}.\]
As $m \simeq s(n_1-n_0)/k$, we have $\sqrt{d}^{-m} =O( d^{-s(n_1-n_0)/2k})$ so we are left with  controlling 
\[I:=\langle \mu, \varphi_{-m}^0\prod_{j=1}^p \varphi^j\circ f^{n'_j} \rangle.\] 
 Let $\chi_A$ be the cut-off function defined
earlier for $I(f^{n_p-m})$ (i.e. the largest integer appearing) and let $\xi_A$ be the similar cut-off function associated to $I(f^{-m})$, by dominated convergence
\[I = \lim_{A\to \infty } \langle \mu, \chi_A \xi _A \varphi_{-m}^0\prod_{j=1}^p \varphi^j\circ f^{n'_j} \rangle.\]

%Let $\kappa$ be a the number defined by
%$\kappa= \langle \omega^s \wedge T^-, \prod_{j=1}^p \varphi^j\circ f^{n'_j}  \rangle$. It depends on our choice of $\varphi^j, n_j$ but we have the bound 
%$|\kappa|\leq  \|\varphi^1\|_{\infty} \dots \|\varphi^p\|_{\infty}$. In particular,
%\[\langle \mu, \varphi_{-m}^0\prod_{j=1}^p \varphi^j\circ f^{n_j} \rangle = \left\langle \mu, \varphi_{-m}^0\left(\prod_{j=1}^p \varphi^j\circ f^{n_j}-\kappa\right)  \right\rangle + \langle \mu, \varphi_{-m}^0\kappa  \rangle. \]
%Again, $|\langle \mu, \varphi_{-m}^0\kappa  \rangle|\leq C \sqrt{d}^{-m} \|\varphi^0\|_{\mathcal{A}^+} \|\varphi^1\|_{\infty} \dots \|\varphi^p\|_{\infty}$ so we are left with
% \[I:=\left\langle \mu, \varphi_{-m}^0\left(\prod_{j=1}^p \varphi^j\circ f^{n_j}-\kappa\right)  \right\rangle. \]
 We denote by $(x,y)$ the coordinates on $\P^k \times \P^k$. Recall $\mu= T_s^+ \wedge T_{k-s}^-$ and let $T^+_{s,\varepsilon}$ and $T^-_{k-s, \varepsilon}$ be Hartogs' approximations
 of $T^+_s$ and $T^-_{k-s}$ and $n\geq n'_p= n_p-m$ an integer. By Hartogs convergence $	\frac{1}{d_s^{n}}(f^{n})^*T^+_{s,\varepsilon}\wedge \frac{1}{d_s^{m}}(f^{m})_*T^-_{k-s,\varepsilon}\to \mu$ in the Hartogs'sense (hence in the sense of measures), so we can rewrite $I= \lim_{A\to \infty} \lim_{\varepsilon \to 0} I_{A,\varepsilon} $ where
 \begin{align*} I_{A,\varepsilon}=  \int_{\P^k\times \P^k} \chi_A(x) \xi_A(y) _A\varphi_{-m}^0(y)\prod_{j=1}^p \varphi^j\circ f^{n'_j}(x) 	\frac{1}{d_s^{n}}(f^{n})^*T^+_{s,\varepsilon}(x)\wedge 
 	\frac{1}{d_s^{m}}(f^{m})_*T^-_{k-s,\varepsilon}(y) \wedge [\Delta] 
 \end{align*}
 where $[\Delta]$ is the current of integration on the diagonal $\Delta$ of
 $(\P^k)^2$. We write $[\Delta]= \sum_{i=0}^k \omega^i(x)\wedge \omega^{k-i}(y)+ dd^c V$
 where $V$ is the negative quasi-potential of $\Delta$ given in \cite[Theorem 2.3.1]{superpotentiels}. We infer, by bidegree's arguments that  $I_{A,\varepsilon}$ can be written as
 \begin{align*}
\int_{(\P^k)^2} \chi_A(x) \xi_A(y) \varphi_{-m}^0(y)\prod_{j=1}^p \varphi^j\circ f^{n'_j}(x)  \frac{1}{d_s^{n}}(f^{n})^*T^+_{s,\varepsilon}(x)\wedge  \frac{1}{d_s^{m}}(f^{m})_*T^-_{k-s,\varepsilon}(y) \wedge dd^c V  + \\
\int_{(\P^k)^2} \chi_A(x) \xi_A(y) \varphi_{-m}^0(y)\prod_{j=1}^p \varphi^j\circ f^{n'_j}(x) \frac{1}{d_s^{n}}(f^{n})^*T^+_{s,\varepsilon}(x)\wedge 
\frac{1}{d_s^{m}}(f^{m})_*T^-_{k-s,\varepsilon}(y) \wedge \omega^{k-s}(x)\wedge \omega^{s}(y).
 \end{align*}
 The second integral converges to $\int_{\P^k\times \P^k} \varphi_{-m}^0(y)\prod_{j=1}^p \varphi^j\circ f^{n'_j}(x)  T^+_{s}(x) \wedge T^-_{k-s}(y) \wedge \omega^{k-s}(x)\wedge \omega^{s}(y)$ which is  zero by Fubini ($\int_{\P^k} \varphi_{-m}^0 \omega^s \wedge T^-_{k-s}=0$ by \eqref{varphi_n}). The above can be restated as
 \begin{align}\label{Stokes16}
 	I= \lim_{A\to \infty} \lim_{\varepsilon \to 0} \int_{(\P^k)^2}
 	&\chi_A(x)
 	\xi_A(y) \varphi_{-m}^0(y)\prod_{j=1}^p \varphi^j\circ f^{n'_j}(x) 
 	 	\frac{1}{d_s^{n}}(f^{n})^*T^+_{s,\varepsilon}(x)\\
 	&\wedge \frac{1}{d_s^{m}}(f^{m})_*T^-_{k-s,\varepsilon}(y)\wedge dd^c V. \nonumber
 \end{align}
 Recall results of  \cite{superpotentiels}[Theorem 2.3.1]. Let $p_1$ and $p_2$ be
 the projections from $(\P^k)^2$ onto its factors. For  $R\in \mathcal{C}_p$, we have that $(p_1)_*(p_2^*(R)\wedge V)$ is the Green quasi-potential of $R$
 associated to $V$ (see the definition of $U$ at the last line of \cite{superpotentiels}[p.17]). We
 denote it by $U_{R}$. Denote by
 $\mathcal{U}_R$ the super-potential of $R$ associated to $U_R$. That implies that the mean of
 $\mathcal{U}_R$ is uniformly bounded for all $R$ and depends continuously on $R$. In particular,
 if $(R_\varepsilon)$ and $(S_\varepsilon)$ are sequences of currents in $\mathcal{C}_q$ and
 $\mathcal{C}_{k-q+1}$  that $H$-converge to $R$ and $S$ then $\mathcal{U}_{R_\varepsilon}(S_\varepsilon) \to
 \mathcal{U}_{R}(S)$.

 Pick $A$ and $\varepsilon$. In the integral \eqref{Stokes16}, we apply Stokes formula (observe that by our construction, every term is smooth in the integral apart from the quasi-potential $V$ so everything is well defined). Developing $dd^c\left(\chi_A(x)\xi_A(y) \varphi_{-m}^0(y)\prod_{j=1}^p \varphi^j\circ f^{n'_j}(x)\right)$   leads to an expression containing $(p+3)^2$ terms. Applying verbatim the same arguments than \cite[Lemmas 9 and 10]{Vigny_decay}, every term containing $d\chi_A(x)$, $d^c\chi_A(x)$, $d\xi_A(y)$,  $d^c\xi_A(y)$, $dd^c  \chi_A(x)$, $dd^c \xi_A(y)$ can be taken arbitrarily small for $A$ large enough (independently of $\varepsilon$). So we are left with terms in (recall $d u \wedge d^c v \wedge Q= dv \wedge d^c u \wedge Q$ for a current $Q$ of bidimension $(1,1)$)
 \begin{align*}
 &  \prod_{j=1, j\neq \ell }^p \varphi^j\circ f^{n'_j}(x) 
 		d\varphi_{-m}^0(y) \wedge d^c \varphi^\ell\circ f^{n'_\ell}(x) , \\
& \varphi_{-m}^0(y) \prod_{j=1, j\neq \ell, j \neq \ell' }^p \varphi^j\circ f^{n'_j}(x) (d \varphi^\ell\circ f^{n'_\ell}\wedge d^c \varphi^{\ell'}\circ f^{n'_{\ell'}})(x), \\
 & \prod_{j=1}^p \varphi^j\circ f^{n'_j}(x) 
 dd^c \varphi_{-m}^0(y),\\	
 & \varphi_{-m}^0(y) \prod_{j=1, j\neq \ell }^p \varphi^j\circ f^{n'_j}(x) 
 dd^c \varphi^\ell\circ f^{n'_\ell}(x)  	
\end{align*} 
where $1\leq \ell \neq \ell' \leq p$ are distinct integers. Let us consider an integral containing terms of the first type, by Cauchy-Schwarz inequality \cite[Lemme 1]{Okada}:
 \begin{align*}
&\Big|\int_{(\P^k)^2} \chi_A(x)\xi_A(y)
  \prod_{j=1, j\neq \ell }^p \varphi^j\circ f^{n'_j}(x) 
d\varphi_{-m}^0(y) \wedge d^c \varphi^\ell\circ f^{n'_\ell}(x)  \wedge   \frac{1}{d_s^{n}}(f^{n})^*T^+_{s,\varepsilon}(x) \\
&
\wedge \frac{1}{d_s^{m}}(f^{m})_*T^-_{k-s,\varepsilon}(y)\wedge V\Big|^2 \leq \prod_{j=1, j\neq \ell}^p \|\varphi^j\|^2_\infty \times\\ 
&  \Big|\int_{(\P^k)^2}  \chi_A(x)\xi_A(y)
d\varphi_{-m}^0(y) \wedge d^c \varphi_{-m}^0(y) \wedge  \frac{1}{d_s^{n}}(f^{n})^*T^+_{s,\varepsilon}(x)
\wedge \frac{1}{d_s^{m}}(f^{m})_*T^-_{k-s,\varepsilon}(y)\wedge V\Big|\\
&\Big|\int_{(\P^k)^2}
d  \varphi^\ell\circ f^{n'_\ell}(x) \wedge d^c \varphi^\ell\circ f^{n'_\ell}(x) \wedge  \chi_A(x)\xi_A(y) \frac{1}{d_s^{n}}(f^{n})^*T^+_{s,\varepsilon}(x)
\wedge \frac{1}{d_s^{m}}(f^{m})_*T^-_{k-s,\varepsilon}(y)\wedge V\Big|.
\end{align*} 
 Consider the first integral of the product, by Fubini, it is bounded from above by
 \begin{align*}
\Big|\int_{\P^k}
	|\varphi^0|^+_1 \frac{1}{d_s^{m}}f^{m}_* (R_1^{0,+} \wedge T^-_{k-s,\varepsilon}) 
	\wedge U_{\frac{1}{d_s^{n}}(f^{n})^*T^+_{s,\varepsilon}}\Big|
\end{align*}
where $R_1^{0,+}$  is a smooth current more $(H,a)$-regular than $T^+$ with  $d\varphi^0\wedge d^c \varphi^0 \leq 	|\varphi^0|^+_1 (R_1^{0,+})$. In term of super-potentials, we can write it as
  \begin{align*}
 	-|\varphi^0|^+_1 \frac{d^m_{s-1}}{d_s^{m}} \mathcal{U}_{\frac{1}{d_s^{n}}(f^{n})^*T^+_{s,\varepsilon}} \left( \frac{1}{d_{s-1}^{m}}f^{m}_* (R_1^{0,+} \wedge T^-_{k-s,\varepsilon})\right). 
 \end{align*}
Since everything converges in the Hartogs sense when $\varepsilon \to 0$, the continuity in the Hartogs sense of the wedge product and of the push-forward for admissible currents, we have that this quantity converges to 
    \begin{align*}
   	-|\varphi^0|^+_1 \frac{d^m_{s-1}}{d_s^{m}} \mathcal{U}_{T^+_{s}} \left( \frac{1}{d_{s-1}^{m}}f^{m}_* (R_1^{0,+} \wedge T^-_{k-s})\right). 
   \end{align*}
By \eqref{eqbound_U+} (notice that the super-potential we use is not the same than the one in \eqref{eqbound_U+}, but as two super-potentials differ by a constant, this is not an issue)  and the fact that  $d = \frac{d_{s}}{d_{s-1}}$, this term is indeed bounded by  $C |\varphi^0|^+_1 d^{-m/2 }$ for some constant $C$ independent on $m$. Going back to the second integral of the above product, by Fubini, it is bounded from above by
 \begin{align*}
	\Big|\int_{\P^k}
	|\varphi^\ell|^-_1 \frac{1}{d_s^{n'_\ell}}(f^{n'_\ell})^* \left(R_1^{\ell,-} \wedge \frac{1}{d_s^{n-n'_\ell}}(f^{n-n'_\ell})^*T^+_{s,\varepsilon}\right) 
	\wedge U_{\frac{1}{d_s^{m}}(f^m)_*(T^-_{k-s,\varepsilon})}\Big|
\end{align*}
where $R_1^{\ell,-}$  is more $(H,a)$-regular than $T^-$ with  $d\varphi^\ell\wedge d^c \varphi^\ell \leq 	|\varphi^\ell|^-_1 (R_1^{\ell,-})$. Again, convergence in the Hartogs sense allows to interpret its limit as $\varepsilon\to 0$ as 
\[-\frac{d_{s+1}^{n'_\ell}}{d_s^{n'_\ell}}
|\varphi^\ell|^-_1 \mathcal{U}_{T^-_{k-s}}\left(\frac{1}{d_{s+1}^{n'_\ell}}(f^{n'_\ell})^* (R_1^{\ell,-} \wedge T^+_{s}) \right).  \]
This time, \eqref{eqbound_U-}  and the fact that  $\delta = \frac{d_{s}}{d_{s+1}}$, this term is bounded by  $C |\varphi^\ell|^-_1 \delta^{-n'_\ell/2}\leq  C |\varphi^\ell|^-_1  d^{-\frac{s(n_{1}-n_0)}{2k}}$ as $n'_\ell=n_\ell -m \geq  n_{1}-m$ and $d^s =\delta^{k-s}$.

 Consider now the square of the term in $(d \varphi^\ell\circ f^{n'_\ell}\wedge d^c \varphi^{\ell'}\circ f^{n'_{\ell'}})(x )$. Using Cauchy-Schwarz inequality  as above will give again two terms, one is the last one we controlled and the other is the same term for $n'_{\ell'}$ instead of $n'_\ell$ hence it is also  
 \[ \leq C |\varphi^\ell|^-_1|\varphi^{\ell'}|^-_1 \prod_{j \neq \ell, j\neq \ell'}  \|\varphi^0_{-m}\|^2_\infty \|\varphi_j\|^2_\infty d^{-\frac{s(n_{1}-n_0)}{k}} .\]
 which bounds this term since $ \|\varphi^0_{-m}\|_\infty \leq 2  \|\varphi^0\|_\infty$ (and we take the square root at the end).
 
 Consider the integral containing the term in  $\prod_{j=1}^p \varphi^j\circ f^{n'_j}(x) 
 dd^c \varphi_{-m}^0(y)$:
 \begin{align*}
 	&\Big|\int_{(\P^k)^2}\prod_{j=1}^p \varphi^j\circ f^{n'_j}(x) 
 	dd^c \varphi_{-m}^0(y) \wedge  \chi_A(x)\xi_A(y) \frac{1}{d_s^{n}}(f^{n})^*T^+_{s,\varepsilon}(x)
 	\wedge \frac{1}{d_s^{m}}(f^{m})_*T^-_{k-s,\varepsilon}(y)\wedge V\Big|.
 \end{align*}
 Let $R_2^{0,+} \in \mathcal{C}_1$, more $(H,a)$-regular than $T^+$ such that  $0\leq |\varphi^0|_{2}^+R_2^{0,+} \pm dd^c \varphi^0$. In particular,  
 \begin{align*}
 \prod_{j=1}^p\varphi^j\circ f^{n'_j}(x) 
 	dd^c \varphi_{-m}^0(y)   \chi_A(x)\xi_A(y) = \left\|\prod_{j=1}^p\varphi^j\circ f^{n'_j}(x) \chi_A(x) \right\|_\infty  \xi_A(y) 
 	dd^c \varphi_{-m}^0(y)  \\
 	+ \left(\prod_{j=1}^p\varphi^j\circ f^{n'_j}(x) \chi_A(x)   
 	- \left\|\prod_{j=1}^p\varphi^j\circ f^{n'_j}(x) \chi_A(x) \right\|_\infty \right)\xi_A(y)    
 	dd^c \varphi_{-m}^0(y)\\
 \leq \prod_{j=1}^p \|\varphi^j\|_\infty   \xi_A(y)
 (f^m)_* (|\varphi^0|_{2}^+R_2^{0,+})  \\
 - \left(\prod_{j=1}^p\varphi^j\circ f^{n'_j}(x) \chi_A(x) 
 -\prod_{j=1}^p \|\varphi^j\|_\infty    \right) \xi_A(y) (f^m)_* (|\varphi^0|_{2}^+R_2^{0,+}) \\
  \leq  3 |\varphi^0|_{2}^+ \prod_{j=1}^p \|\varphi^j\|_\infty   
 \xi_A(y)(f^m)_* (R_2^{0,+})
 \end{align*}
 and similarly   
  \begin{align*}
 	- \prod_{j=1}^p\varphi^j\circ f^{n'_j}(x) 
 	dd^c \varphi_{-m}^0(y)   \chi_A(x)\xi_A(y) 
 	\leq  3 |\varphi^0|_{2}^+ \xi_A(y) \prod_{j=1}^p \|\varphi^j\|_\infty   
 	(f^m)_* (R_2^{0,+}).
 \end{align*}
 So using that $V \leq 0$ in the sense of currents gives
 \begin{align*}
	&\Big|\int_{(\P^k)^2}\prod_{j=1}^p \varphi^j\circ f^{n'_j}(x) 
	dd^c \varphi_{-m}^0(y) \wedge  \chi_A(x)\xi_A(y) \frac{1}{d_s^{n}}(f^{n})^*T^+_{s,\varepsilon}(x)
	\wedge \frac{1}{d_s^{m}}(f^{m})_*T^-_{k-s,\varepsilon}(y)\wedge V\Big| \leq \\
	&-\int_{\P^k}   \frac{3}{d_s^{m}} |\varphi^0|_{2}^+ \prod_{j=1}^p \|\varphi^j\|_\infty   
	(f^m)_* (R_2^{0,+} \wedge T^-_{k-s, \varepsilon}) \wedge U_{ \frac{1}{d_s^{n}}(f^{n})^*T^+_{s,\varepsilon}} 
\end{align*} 
so processing as above gives that, letting $\varepsilon\to 0$,  it is again $\leq C  |\varphi^0|^+_2 \prod_{j=1}^p \|\varphi^j\|_\infty   d^{-m/2 }$. Terms in $\varphi_{-m}^0(y) \prod_{j=1, j\neq \ell }^p \varphi^j\circ f^{n'_j}(x) 
dd^c \varphi^\ell\circ f^{n'_\ell}(x) $ are similar and gives bound in $\leq C  |\varphi^\ell|^-_2 \ \prod_{j=0, j\neq \ell}^p \|\varphi^j\|_\infty   d^{-m/2 }$ which concludes the proof (replace $C_p$ with $\max( C_{p}, C_{p-1})$ to make it increasing if necessary).
\end{proof}	
	
\begin{corollary}[exponential mixing of all orders] For all $ p\geq 1$, there exists a constant $C_p >0$ such that $ \forall \varphi^0, \dots, \varphi^p \in \mathcal{A}$ and $0=:n_0\leq n_1 \leq \dots \leq n_p$, we have 
	\begin{align*}
		&\left|  \langle \mu, \prod_{j=0}^p \varphi^j\circ f^{n_j} \rangle -   \prod_{j=0}^p\langle \mu, \varphi^j\rangle \right|\leq 	C_p d^{-\frac{s}{2k}\min_{0\leq j \leq p-1}(n_{j+1}-n_j)}\|\varphi^0\|_{\mathcal{A}} \|\varphi^1\|_{\mathcal{A}} \dots \|\varphi^p\|_{\mathcal{A}}. 
		\end{align*}		
\end{corollary}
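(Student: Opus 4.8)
\emph{Proof idea.} The plan is to deduce the Corollary from Proposition~\ref{prop-principale} by a telescoping argument in which the observables are peeled off one at a time from the left, the time indices being re‑centered by the $f$‑invariance of $\mu$ after each peeling. Concretely, for $0\le i\le p$ I would set
\[ P_i := \langle \mu,\ \prod_{j=i}^p \varphi^j\circ f^{\,n_j-n_i}\rangle ,\]
which is well defined since $\mathcal{A}$ is stable under $f^*$, its elements are defined off a pluripolar set, and $\mu$ charges no pluripolar set; note $P_0=\langle\mu,\prod_{j=0}^p\varphi^j\circ f^{n_j}\rangle$ (as $n_0=0$) and $P_p=\langle\mu,\varphi^p\rangle$. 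The first step is to bound the one‑step error. Writing $P_i=\langle\mu,\ \varphi^i\cdot\prod_{j=i+1}^p\varphi^j\circ f^{\,n_j-n_i}\rangle$ and applying Proposition~\ref{prop-principale} to the $(p-i)$‑tuple $(\varphi^i,\varphi^{i+1},\dots,\varphi^p)$ with the times $0\le n_{i+1}-n_i\le\cdots\le n_p-n_i$ gives
\[ \Big| P_i-\langle\mu,\varphi^i\rangle\,\langle\mu,\textstyle\prod_{j=i+1}^p\varphi^j\circ f^{\,n_j-n_i}\rangle\Big|\ \le\ C_{p-i}\, d^{-\frac{s(n_{i+1}-n_i)}{2k}}\,\|\varphi^i\|_{\mathcal{A}^+}\prod_{j=i+1}^p\|\varphi^j\|_{\mathcal{A}^-}.\]
Shifting by $n_{i+1}-n_i$ and using the invariance of $\mu$ identifies the inner average with $P_{i+1}$; together with $\|\varphi^i\|_{\mathcal{A}^\pm}\le\|\varphi^i\|_{\mathcal{A}}$ this yields $|P_i-\langle\mu,\varphi^i\rangle P_{i+1}|\le C_{p-i}\, d^{-\frac{s(n_{i+1}-n_i)}{2k}}\prod_{j=i}^p\|\varphi^j\|_{\mathcal{A}}=:E_i$.

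The second step is to telescope: substituting $P_i=\langle\mu,\varphi^i\rangle P_{i+1}+(P_i-\langle\mu,\varphi^i\rangle P_{i+1})$ for $i=0,\dots,p-1$ and iterating gives
\[ P_0-\prod_{j=0}^p\langle\mu,\varphi^j\rangle\ =\ \sum_{i=0}^{p-1}\Big(\prod_{j=0}^{i-1}\langle\mu,\varphi^j\rangle\Big)\big(P_i-\langle\mu,\varphi^i\rangle P_{i+1}\big),\]
so that, using $|\langle\mu,\varphi^j\rangle|\le\|\varphi^j\|_\infty\le\|\varphi^j\|_{\mathcal{A}}$ and $n_{i+1}-n_i\ge\min_{0\le j\le p-1}(n_{j+1}-n_j)$ for every $i$, one obtains the claimed estimate with $C_p:=\sum_{i=1}^p C_i$ (each $C_i$ being the constant of Proposition~\ref{prop-principale} for that value).

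The argument is routine once Proposition~\ref{prop-principale} is available; there is no real obstacle, but the one point worth stressing is why it works without a symmetric ``backward'' version of the proposition and without any growth of norms: after re‑centering by $\mu$‑invariance, the gap controlling the error at the $i$‑th step is exactly $n_{i+1}-n_i$, each of which is $\ge\min_j(n_{j+1}-n_j)$, and no observable is ever composed with a large iterate of $f$, so its $\mathcal{A}$‑norm stays fixed. The only bookkeeping care needed is to track the constants $C_{p-i}$ and to note that all the averages $P_i$ make sense because the (products of) functions of $\mathcal{A}$ are defined outside $\mu$‑negligible pluripolar sets, which uses that $\mathcal{A}$ is an algebra and that $\mu$ gives no mass to pluripolar sets.
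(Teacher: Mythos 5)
Your proposal is correct and follows essentially the same route as the paper: the same telescoping decomposition (your $P_i$ are exactly the paper's shifted averages $\langle\mu,\prod_{j\ge i}\varphi^j\circ f^{n_j}\rangle$ after using invariance), each increment bounded by Proposition~\ref{prop-principale} applied to $(\varphi^i,\dots,\varphi^p)$ with times $n_j-n_i$, and the prefactors $\prod_{j<i}|\langle\mu,\varphi^j\rangle|$ controlled via $\|\varphi^j\|_\infty\le\|\varphi^j\|_{\mathcal{A}}$. The only cosmetic difference is your explicit constant $\sum_i C_i$ versus the paper's $(p+1)C_p$ using monotonicity of the $C_p$.
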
	
	\begin{proof} Write 
	\begin{align*}
I&:=\left|  \langle \mu, \prod_{j=0}^p \varphi^j\circ f^{n_j} \rangle -   \prod_{j=0}^p\langle \mu, \varphi^j\rangle \right| \\
  &=  \left| \sum_{i=0}^{p}  \left(\prod_{j=0}^{i-1}\langle \mu, \varphi^j\rangle  \langle \mu, \prod_{j=i}^p \varphi^j\circ f^{n_j} \rangle - \prod_{j=0}^{i}\langle \mu, \varphi^j\rangle  \langle \mu, \prod_{j=i+1}^p \varphi^j\circ f^{n_j} \rangle\right) \right|,
   \end{align*}
   with the natural convention that an empty product is $1$. By  Proposition~\ref{prop-principale} and Remark~\ref{defined_up_to_a_pp}
  	\begin{align*}
  	I&\leq \sum_{i=0}^{p}  \left(\prod_{j=0}^{i-1}|\langle \mu, \varphi^j\rangle|  \left| \langle \mu, \prod_{j=i}^p \varphi^j\circ f^{n_j} \rangle -\langle \mu, \varphi^i\rangle  \langle \mu, \prod_{j=i+1}^p \varphi^j\circ f^{n_j} \rangle \right|\right) \\
  	&\leq \sum_{i=0}^{p}  \left(\prod_{j=0}^{i-1}\|\varphi^j\|_\infty  \left| \langle \mu, \prod_{j=i}^p \varphi^j\circ f^{n_j-n_i} \rangle -\langle \mu, \varphi^i\rangle  \langle \mu, \prod_{j=i+1}^p \varphi^j\circ f^{n_j-n_i} \rangle \right|\right)\\
  	&\leq \sum_{i=0}^{p}  \left(\prod_{j=0}^{i-1}\|\varphi^j\|_\infty\right) C_{p-i}d^{-\frac{s(n_{i+1}-n_i)}{2k}} \|\varphi^i\|_{\mathcal{A}^+} \|\varphi^{i+1}\|_{\mathcal{A}^-} \dots \|\varphi^p\|_{\mathcal{A}^-}\\
&  	\leq (p+1)C_p  d^{-\frac{s}{2k}\min_{0\leq j \leq p-1}(n_{j+1}-n_j)} \|\varphi^0\|_{\mathcal{A}} \|\varphi^{1}\|_{\mathcal{A}} \dots \|\varphi^p\|_{\mathcal{A}},
  \end{align*} 
  since $C_p$ is increasing. The result follows (renaming $C_p$).
	\end{proof}
\begin{proof}[Proof of Theorem~\ref{tm_exp}]
	The proof is now an immediate application of the previous corollary with the bounds of the norm in $\mathcal{A}^a$ of Examples~\ref{ex_smooth}  for $\varphi_j \in C^2$ and \ref{ex_DSH} for $\varphi_j \in \mathrm{DSH}_a^\infty(\P^k)$. For the Hölder case, the result follows from the $C^2$ case by iterating an interpolation argument (see for instance \cite[pp. 262-263]{Dinh_decay}).
\end{proof}

Write $\kappa:=\frac{s\log d}{2k}$ so that $d^{-\frac{s}{2k}\min_{0\leq j \leq p-1}(n_{j+1}-n_j)}= e^{-\kappa\min_{0\leq j \leq p-1}(n_{j+1}-n_j) }$. \\

The proof of the CLT is a consequence of \cite[Lemma 9.1]{BG} where it is the only place the invariance of the space of test functions under the dynamics is used. As mentioned above, in our case, the space $\mathcal{A}$ is not invariant under $f$ (only $\mathcal{A}^+$ is but then it is not invariant for $f^{-1}$). Careful analysis of the proof shows that in fact, Proposition~\ref{prop-principale} still gives the estimate. We give the details for the sake of the reader. Recall first the lemma. 
\begin{corollary}[Lemma 9.1 of \cite{BG} holds]\label{lemma 9.1} Fix $0\leq \alpha < \beta$. Then, for every partition $\Omega$ of $[r]:=\{1,\dots,r\}$, $\underline{h}\in \Delta_\Omega(\alpha,\beta)$ and $\varphi \in \mathcal{A}$:
	\[\left|\Psi_{\varphi, \underline{h}}(I)- \Psi^\Omega_{\varphi, \underline{h}}(I)\right|\ll_r e^{-(\beta \kappa  - r \alpha \max (\log d, \log \delta))}\|\varphi\|_{\mathcal{A}}^{|I|}\]
		for every $I\subset [r]$. Here, $\ll_r$ means that the constant depends only on $r$ (and not on $\alpha$ nor $\beta$).
\end{corollary}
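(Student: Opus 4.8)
The plan is to rerun the proof of \cite[Lemma 9.1]{BG}, using Proposition~\ref{prop-principale} in place of the multiple-mixing input exploited there. Recall the notation: $\Omega|_I$ is the partition of $I$ by the blocks of $\Omega$ meeting $I$, $\Psi_{\varphi,\underline{h}}(I)=\langle\mu,\prod_{i\in I}\varphi\circ f^{h_i}\rangle$, $\Psi^\Omega_{\varphi,\underline{h}}(I)=\prod_{B\in\Omega|_I}\Psi_{\varphi,\underline{h}}(B)$, and $\underline{h}\in\Delta_\Omega(\alpha,\beta)$ means that every block of $\Omega$ has $\underline{h}$-diameter $\le\alpha$ while points of $[r]$ in distinct blocks are at $\underline{h}$-distance $\ge\beta$; as $\alpha<\beta$, the blocks of $\Omega|_I$ are then linearly ordered by their times. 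The observation that replaces the $f$-invariance of the test algebra used in \cite{BG} is that the proof of Proposition~\ref{prop-principale} uses only the $\mathcal{A}_+$-data of the leading observable and only the $\mathcal{A}_-$-data of the others, hence holds for $\varphi^0\in\mathcal{A}_+$ and $\varphi^1,\dots,\varphi^p\in\mathcal{A}_-$; this is precisely the direction of regularity preserved by forward iteration in Proposition~\ref{prop_stability}.

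Next I would record a fusion step. Given a block $B$ of $\Omega|_I$ and $h_B:=\min_{i\in B}h_i$, set $\psi_B:=\prod_{i\in B}\varphi\circ f^{h_i-h_B}=\prod_{i\in B}(f^{h_i-h_B})^*\varphi$. By Proposition~\ref{prop_stability} each factor lies in $\mathcal{A}_+$ with $\mathcal{A}_+$-norm $\le d^{\,h_i-h_B}\|\varphi\|_{\mathcal{A}_+}\le d^{\alpha}\|\varphi\|_{\mathcal{A}}$, and since $\mathcal{A}_+$ is a Banach algebra with $\|\varphi_1\varphi_2\|_{\mathcal{A}_+}\le 20\|\varphi_1\|_{\mathcal{A}_+}\|\varphi_2\|_{\mathcal{A}_+}$ we get $\psi_B\in\mathcal{A}_+$ with $\|\psi_B\|_{\mathcal{A}_+}\le(20\,d^{\alpha})^{|B|}\|\varphi\|_{\mathcal{A}}^{|B|}$; moreover $\langle\mu,\psi_B\rangle=\Psi_{\varphi,\underline{h}}(B)$ by invariance of $\mu$.

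The proof is then an induction on the number $m$ of blocks of $\Omega$ meeting $I$. If $m=1$, $\Psi_{\varphi,\underline{h}}(I)=\Psi^\Omega_{\varphi,\underline{h}}(I)$ and there is nothing to prove. If $m\ge2$, let $B_1\in\Omega|_I$ carry the smallest times; subtracting $h_{B_1}$ from every $h_i$ (harmless by invariance of $\mu$, as all exponents remain $\ge0$), the times of $B_1$ lie in $[0,\alpha]$ and those of $I\setminus B_1$ are $\ge\beta$. Applying Proposition~\ref{prop-principale} in the asymmetric form above, with $\varphi^0=\psi_{B_1}\in\mathcal{A}_+$ and the remaining $p:=|I\setminus B_1|$ observables all equal to $\varphi\in\mathcal{A}_-$, listed in increasing time order so the first gap is $\ge\beta$, together with the fusion bound gives
\[\left|\Psi_{\varphi,\underline{h}}(I)-\Psi_{\varphi,\underline{h}}(B_1)\,\Psi_{\varphi,\underline{h}}(I\setminus B_1)\right|\le C_p\,(20\,d^{\alpha})^{|B_1|}\,e^{-\kappa\beta}\,\|\varphi\|_{\mathcal{A}}^{|I|}.\]
Since $I\setminus B_1$ meets $m-1$ blocks of $\Omega$ and the restriction of $\underline{h}$ still satisfies the diameter and gap conditions, the induction hypothesis bounds $|\Psi_{\varphi,\underline{h}}(I\setminus B_1)-\Psi^\Omega_{\varphi,\underline{h}}(I\setminus B_1)|$; combining it with the previous inequality via the triangle inequality, the identity $\Psi^\Omega_{\varphi,\underline{h}}(I)=\Psi_{\varphi,\underline{h}}(B_1)\,\Psi^\Omega_{\varphi,\underline{h}}(I\setminus B_1)$, the bound $|\Psi_{\varphi,\underline{h}}(B_1)|\le\|\varphi\|_\infty^{|B_1|}\le\|\varphi\|_{\mathcal{A}}^{|B_1|}$, the estimate $|B_1|\le r$, and the monotonicity of $(C_p)$, one obtains $|\Psi_{\varphi,\underline{h}}(I)-\Psi^\Omega_{\varphi,\underline{h}}(I)|\ll_r e^{-(\beta\kappa-r\alpha\log d)}\|\varphi\|_{\mathcal{A}}^{|I|}\le e^{-(\beta\kappa-r\alpha\max(\log d,\log\delta))}\|\varphi\|_{\mathcal{A}}^{|I|}$, closing the induction.

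I expect the only genuine difficulty to be in the first paragraph: one must check that Proposition~\ref{prop-principale} survives being fed a leading observable $\psi_{B_1}$ lying in $\mathcal{A}_+$ but not in $\mathcal{A}_-$ (forward iteration destroys the $\mathcal{A}_-$-conditions), since this is exactly what replaces the stability of the test algebra used in \cite{BG}. Everything else is routine bookkeeping, with the loss $e^{r\alpha\log d}$ coming from the norm growth in Proposition~\ref{prop_stability} at each fusion.
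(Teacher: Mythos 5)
Your proposal is correct and follows essentially the same route as the paper: peel off one block at a time, fuse it into a single $\mathcal{A}_+$ observable via Proposition~\ref{prop_stability} and the algebra inequality (at cost $(20d^{\alpha})^{|B|}$), invoke the asymmetric form of Proposition~\ref{prop-principale} (whose proof indeed only uses the $\mathcal{A}_+$-data of the time-zero observable and the $\mathcal{A}_-$-data of the others — the same implicit point the paper relies on), and induct on the number of blocks. The only cosmetic difference is that the paper also fuses each remaining block into a single $\mathcal{A}_-$ observable via backward iterates (whence its $\max(d,\delta)^{\alpha|I|}$ factor), whereas you leave them unfused, which is equally valid since only the first gap enters the mixing estimate.
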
	
Let us explain the notations first and their meaning in our setting where the group acting is $\{f^n\}_{n\in \Z}$. An element $\underline{h}\in \Delta_\Omega(\alpha,\beta)$ is a $r$-tuple $(f^{n_1}, \dots, f^{n_r})$ with $(n_1, \dots n_r)\in \Z^r$ with the following conditions	
\begin{itemize}
	\item $d^{\Omega}(\underline{h})\leq \alpha$ where $d^{\Omega}(\underline{h})= \max \{d^J(\underline{h}), \ J\in \Omega \}= \max \{ (\max |n_i-n_j|, \ i,j\in J), \ J\in \Omega \}$. In other words,  $d^{\Omega}(\underline{h})\leq \alpha$ means that for any element $J$ of the partition, the iterations appearing in $J$ are close from each other. 
	\item $d_\Omega(\underline{h})>\beta$ where $d_\Omega(\underline{h})= \min \{ d_{J,J'}(\underline{h}), \ J,J'\in \Omega, \ J\neq J'\}=\min \{ (\min |n_j-n_{j'}|, \ j\in J,\ j'\in J'), \ J,J'\in \Omega, \ J\neq J'\}$.  In other words,  $d_\Omega(\underline{h})>\beta$ means that the iteration numbers belonging to distinct elements of the partition are far from each other.  	
\end{itemize}	
As in \cite{BG}, denote $W_I:= \{J\in \Omega, I\cap J \neq \varnothing\}= \{ J_1, J_2, \dots, J_\ell\}$ where, by definition, $J_1$ contains the element $i_1\in I$ such that $n_{i_1}$ is the largest element in $\underline{h}$, $J_2$ contains the element $i_2\in I$ such that $n_{i_2}$ is the largest element in $\underline{h}$ not corresponding to $n_i$ with $i\in J_1$ and so on. 

Let $\varphi_m:= \prod_{i \in I \cap J_m} \varphi \circ f^{-n_i}$ (observe that in \cite{BG}, $h.\varphi= \varphi \circ h^{-1}$). Then (see \cite[definition p.470]{BG})
\[ \Psi_{\varphi, \underline{h}}(I \cap J_m) = \langle \mu, \varphi_m\rangle,    \]
\[ \Psi_{\varphi, \underline{h}}(I) = \langle \mu, \prod_{m=1}^\ell  \varphi_m\rangle.   \]
With the convention that $\Psi_{\varphi, \underline{h}}(\varnothing)=1$, we have (see \cite[§7.2]{BG})
\[ \Psi^\Omega_{\varphi, \underline{h}}(I) :=  \prod_{m=1}^\ell \Psi_{\varphi, \underline{h}}(I \cap J_m) = \prod_{m=1}^\ell \langle \mu,  \varphi_m\rangle.    \]	
\begin{proof}
{\bf Step 1.} For $m=1$, take $n_1^1:=n_{i_1}$ and, for $m=2, \dots, \ell$, $n_m^1$ is the smallest possible integer in $\underline{h}$ for indexes in $J_m\cap I$. Denote
\[\varphi_m^1:= \prod_{i\in I\cap J_m} \varphi \circ f^{-n_i}\circ f^{n^1_m}= \varphi_m\circ f^{n^1_m}, \quad \ m=1,\dots, \ell.\]
By the invariance of $\mu$
\begin{align*}
I_1:&= \left|\Psi_{\varphi, \underline{h}}(I)- \Psi^\Omega_{\varphi, \underline{h}}(I)\right|=\left|    \langle \mu,  \prod_{m=1}^\ell \varphi_m\rangle - \prod_{m=1}^\ell\langle \mu,   \varphi_m\rangle \right|\\
& = \left|    \langle \mu,  \prod_{m=1}^\ell \varphi^1_m\circ f^{-n_m^1}\rangle - \prod_{m=1}^\ell\langle \mu,   \varphi_m\rangle \right| \\
&\leq \left|    \langle \mu,  \prod_{m=1}^\ell \varphi^1_m\circ f^{n_1^1-n_m^1}\rangle - \langle \mu,   \varphi^1_1\rangle\langle \mu,  \prod_{m=2}^\ell \varphi^1_m\circ f^{n_1^1-n_m^1}\rangle\right| \\
&\quad + \left| \langle \mu,   \varphi^1_1\rangle\langle \mu,  \prod_{m=2}^\ell \varphi^1_m\circ f^{-n_m^1}\rangle- \prod_{m=1}^\ell\langle \mu,   \varphi_m\rangle \right|.  
\end{align*}
For the first term of the sum, using Proposition~\ref{prop-principale} for the $(n_1^1-n_m^1)_{ m=1, \dots,\ell }$ (recall that $n_1^1$ is the largest of all the $n_i^m$), it is bounded from above by 
\[C_\ell e^{-\kappa \min_{m\neq 1} |n_m^1-n_{1}^1|}\|\varphi_1^1\|_{\mathcal{A}^+} \|\varphi_2^1\|_{\mathcal{A}^-} \dots \|\varphi_\ell^1\|_{\mathcal{A}^-},\] 
as $C_\ell \geq C_{\ell -1 }$. By the multiplicative property of the norms and Proposition~\ref{prop_stability}:
\begin{align*}
	\| \varphi_1^1\|_{\mathcal{A}^+} &= 	\left\|  \prod_{i \in I\cap J_1} \varphi\circ f^{-n_i+n_1^1}\right\|_{\mathcal{A}^+} 
\leq  20^{|I\cap J_1|} \prod_{i \in I\cap J_1} 	\| \varphi\circ f^{n_1^1-n_i}\|_{\mathcal{A}^+}\\
&\leq 	 20^{|I\cap J_1|} \prod_{i \in I\cap J_1}   d^{n_1^1-n_i}	\| \varphi\|_{\mathcal{A}^+}\leq (20\| \varphi\|_{\mathcal{A}^+})^{|I\cap J_1|} d^{\alpha |I\cap J_1|} 	\end{align*}
by the definition of $\alpha$. For $m\geq 2$, the same computations yield ($-n_i+n_m^1 \leq 0 $ for $i \in J_m\cap I$):
\begin{align*}
	\| \varphi_m^1\|_{\mathcal{A}^-} &= 	\left\|  \prod_{i \in I\cap J_m} \varphi\circ f^{-n_i+n_m^1}\right\|_{\mathcal{A}^-} 
	\leq (20\| \varphi\|_{\mathcal{A}^-})^{|I\cap J_m|} \delta^{\alpha |I\cap J_m|} .\end{align*}
Thus, since $\langle \mu,   \varphi^1_1\rangle= \langle \mu,   \varphi_1\rangle$ and $|I|=|I\cap J_1|+\dots + |I\cap J_\ell|$:
\begin{align*}
	I_1&\leq C_\ell 20^{|I|}  e^{-\kappa \beta} \|\varphi\|_{\mathcal{A}^+}^{|J_1 \cap I|} \|\varphi\|_{\mathcal{A}^-}^{|J_2 \cap I|} \dots \|\varphi\|_{\mathcal{A}^-}^{|J_\ell \cap I|} \max(d,\delta)^{\alpha |I|} \\   
&\quad + \left| \langle \mu,   \varphi^1_1\rangle\langle \mu,  \prod_{m=2}^\ell \varphi^1_m\circ f^{-n_m^1}\rangle- \prod_{m=1}^\ell\langle \mu,   \varphi_m\rangle \right|\\
&\leq C_\ell 20^{|I|}  e^{-\kappa \beta} \|\varphi\|_{\mathcal{A}}^{|I|} \max(d,\delta)^{\alpha |I|}+  |\langle \mu,   \varphi_1\rangle|\left|\langle \mu,  \prod_{m=2}^\ell \varphi_m\rangle- \prod_{m=2}^\ell\langle \mu,   \varphi_m\rangle \right|.
\end{align*}

\noindent {\bf Step 2.} We repeat the same argument for
 \[I_2:=\left|\langle \mu,  \prod_{m=2}^\ell \varphi_m\rangle- \prod_{m=2}^\ell\langle \mu,   \varphi_m\rangle \right|.\]
  For $m=2$, we pick $n_2^2$ to be the largest integer in $\underline{h}$ corresponding to indexes in $J_2\cap I$ (so $n_2^2=n_{i_2}$ with the above notations) and for $m=3,\dots, \ell$, we pick $n_m^2$ to be the smallest integer in $\underline{h}$ corresponding to indexes in  $J_m\cap I$. For $2\leq m\leq \ell$, we define $\varphi_m^2:= \varphi_m\circ f^{n_m^2}= \prod_{i\in J_m \cap I} \varphi\circ f^{-n_i+n_m^2}$. That way
  \begin{align*}
  	I_2&= \left|\langle \mu,  \prod_{m=2}^\ell \varphi^2_m\circ f^{-n_m^2}\rangle- \prod_{m=2}^\ell\langle \mu,   \varphi^2_m\circ f^{-n_m^2}\rangle \right| \\
  	&\leq \left|\langle \mu,  \prod_{m=2}^\ell \varphi^2_m\circ f^{-n_m^2}\rangle - \langle \mu,   \varphi^2_2\rangle  \prod_{m=3}^\ell\langle \mu,   \varphi^2_m\circ f^{-n_m^2}\rangle \right|\\
&\quad +\left|\langle \mu,   \varphi^2_2\rangle  \prod_{m=3}^\ell\langle \mu,   \varphi^2_m\circ f^{-n_m^2}\rangle  	
  	- \prod_{m=2}^\ell\langle \mu,   \varphi^2_m\circ f^{-n_m^2}\rangle \right|. 
  \end{align*} 
We can apply Step 1 to the first term of the sum, replacing $I$ with $I'=I\backslash J_1$ and taking $J'_m=J_{m+1}$ for $m=2, \dots, \ell$. This leads to  (again, $C_{\ell-2}\leq C_\ell$)
\begin{align*}
\left|\langle \mu,  \prod_{m=2}^\ell \varphi^2_m\circ f^{-n_m^2}\rangle - \langle \mu,   \varphi^2_2\rangle  \prod_{m=3}^\ell\langle \mu,   \varphi^2_m\circ f^{-n_m^2}\rangle \right|\leq 
	C_{\ell} 20^{|I'|}  e^{-\kappa \beta} \|\varphi\|_{\mathcal{A}}^{|I'|} \max(d,\delta)^{\alpha |I'|}.
\end{align*} 
By Remark~\ref{defined_up_to_a_pp},  $|\langle \mu,   \varphi_1\rangle|\leq \|\varphi\|_\infty^{|I_1|} \leq \|\varphi\|_\mathcal{A}^{|I_1|}$, this leads to
\begin{align*}
	I_1&\leq C_\ell 20^{|I|}  e^{-\kappa \beta} \|\varphi\|_{\mathcal{A}}^{|I|} \max(d,\delta)^{\alpha |I|} +\|\varphi\|_\mathcal{A}^{|I_1|} 	C_{\ell} 20^{|I'|}  e^{-\kappa \beta} \|\varphi\|_{\mathcal{A}}^{|I'|} \max(d,\delta)^{\alpha |I'|}\\
	& + \|\varphi\|_\mathcal{A}^{|I_1|}  \left|\langle \mu,   \varphi^2_2\rangle  \prod_{m=3}^\ell\langle \mu,   \varphi^2_m\circ f^{-n_m^2}\rangle  	
	- \prod_{m=2}^\ell\langle \mu,   \varphi^2_m\circ f^{-n_m^2}\rangle \right| .
\end{align*} 
So, a straightforward induction gives 
\[I_1\leq \ell C_\ell  20^{|I|} e^{-\kappa \beta} \|\varphi\|_\mathcal{A}^{|I|}\max(d,\delta)^{\alpha |I|}\]
as $|I|\leq r$,   $ \ell C_\ell  20^{|I|}\leq r C_r 20^r$ is indeed a constant that depends only on $r$ and we have
 \[I_1\ll_r e^{-(\beta \kappa  - r \alpha \max (\log d, \log \delta))}\|\varphi\|_{\mathcal{A}}^{|I|}. \]
\end{proof}

\bibliographystyle{alpha}
\bibliography{biblio}

\begin{thebibliography}{DTV10}

\bibitem[BD05]{Bedford-Diller}
Eric Bedford and Jeffrey Diller.
\newblock Energy and invariant measures for birational surface maps.
\newblock {\em Duke Math. J.}, 128(2):331--368, 2005.

\bibitem[BD23]{BianchiDinh2}
Fabrizio Bianchi and Tien-Cuong Dinh.
\newblock Exponential mixing of all orders and {CLT} for automorphisms of
  compact {K{\"a}hler} manifolds, 2023.

\bibitem[BD24]{BianchiDinh1}
Fabrizio Bianchi and Tien-Cuong Dinh.
\newblock Every complex {H}{\'e}non map is exponentially mixing of all orders
  and satisfies the {CLT}.
\newblock {\em Forum of Mathematics, Sigma}, 12:e4, 2024.

\bibitem[BG20]{BG}
Michael Bj{\"o}rklund and Alexander Gorodnik.
\newblock Central limit theorems for group actions which are exponentially
  mixing of all orders.
\newblock {\em J. Anal. Math.}, 141(2):457--482, 2020.

\bibitem[Dem11]{Demailly}
Jean-Pierre Demailly.
\newblock Complex analytic and differential geometry, 2011.
\newblock Free accessible book
  (http://www-fourier.ujf-grenoble.fr/~demailly/manuscripts/agbook.pdf).

\bibitem[Din05]{Dinh_decay}
Tien-Cuong Dinh.
\newblock Decay of correlations for {H{\'e}non} maps.
\newblock {\em Acta Math.}, 195(2):253--264, 2005.

\bibitem[DS05]{DinhSibonyR}
Tien-Cuong Dinh and Nessim Sibony.
\newblock Dynamics of regular birational maps in {{\(\mathbb P^k\)}}.
\newblock {\em J. Funct. Anal.}, 222(1):202--216, 2005.

\bibitem[DS06a]{DinhSibonyW}
Tien-Cuong Dinh and Nessim Sibony.
\newblock Decay of correlations and the central limit theorem for meromorphic
  maps.
\newblock {\em Comm. Pure Appl. Math.}, 59(5):754--768, 2006.

\bibitem[DS06b]{Dinh_Sibony_distrib}
Tien-Cuong Dinh and Nessim Sibony.
\newblock Distribution of the values of meromorphic transformations and
  applications.
\newblock {\em Comment. Math. Helv.}, 81(1):221--258, 2006.

\bibitem[DS09]{superpotentiels}
Tien-Cuong Dinh and Nessim Sibony.
\newblock Super-potentials of positive closed currents, intersection theory and
  dynamics.
\newblock {\em Acta Math.}, 203(1):1--82, 2009.

\bibitem[DS10]{dinhsibony2}
Tien-Cuong Dinh and Nessim Sibony.
\newblock Dynamics in several complex variables: endomorphisms of projective
  spaces and polynomial-like mappings.
\newblock In {\em Holomorphic dynamical systems}, volume 1998 of {\em Lecture
  Notes in Math.}, pages 165--294. Springer, Berlin, 2010.

\bibitem[DTV10]{Dethelin_Vigny}
Henry De~Th\'{e}lin and Gabriel Vigny.
\newblock Entropy of meromorphic maps and dynamics of birational maps.
\newblock {\em M\'{e}m. Soc. Math. Fr. (N.S.)}, (122):vi+98, 2010.

\bibitem[GV23]{gauthier2023complex}
Thomas Gauthier and Gabriel Vigny.
\newblock Complex dynamics of birational maps of $\mathbb{P}^k$ defined over a
  number field, 2023.

\bibitem[JR18]{Jonsson-Reschke}
Mattias Jonsson and Paul Reschke.
\newblock On the complex dynamics of birational surface maps defined over
  number fields.
\newblock {\em J. Reine Angew. Math.}, 744:275--297, 2018.

\bibitem[Oka82]{Okada}
Masami Okada.
\newblock Espaces de {Dirichlet} g{\'e}n{\'e}raux en analyse complexe.
\newblock {\em J. Funct. Anal.}, 46:396--410, 1982.

\bibitem[Sib99]{Sibony}
Nessim Sibony.
\newblock Dynamique des applications rationnelles de {$\textbf{P}^k$}.
\newblock In {\em Dynamique et g\'eom\'etrie complexes ({L}yon, 1997)},
  volume~8 of {\em Panor. Synth\`eses}, pages ix--x, xi--xii, 97--185. Soc.
  Math. France, Paris, 1999.

\bibitem[Vig07]{Vigny_JFA}
Gabriel Vigny.
\newblock Dirichlet-like space and capacity in complex analysis in several
  variables.
\newblock {\em J. Funct. Anal.}, 252(1):247--277, 2007.

\bibitem[Vig15]{Vigny_decay}
Gabriel Vigny.
\newblock Exponential decay of correlations for generic regular birational maps
  of {{\(\mathbb P^k\)}}.
\newblock {\em Math. Ann.}, 362(3-4):1033--1054, 2015.

\bibitem[Wu22]{Hao_decay}
Hao Wu.
\newblock Exponential mixing property for {H{\'e}non}-{Sibony} maps of
  {{\(\mathbb{C}^k\)}}.
\newblock {\em Ergodic Theory Dyn. Syst.}, 42(12):3818--3830, 2022.

\end{thebibliography}

\end{document}